\newtheorem{theorem}{Theorem}[section]
\newtheorem{lemma}[theorem]{Lemma}
\newtheorem{proposition}[theorem]{Proposition}
\newtheorem{definition}[theorem]{Definition}
\numberwithin{equation}{section}
\newcommand{\cc}{\mathfrak{c}}
\newcommand{\N}{\mathbb{N}}
\newcommand{\BB}{\mathcal{B}}
\newcommand{\C}{\mathbb{C}}
\newcommand{\R}{\mathbb{R}}
\newcommand{\Z}{\mathcal{Z}}
\newcommand{\F}{\mathcal{F}}
\newcommand{\M}{\mathcal{M}}
\newcommand{\A}{\mathcal{A}}
\newcommand{\B}{\mathcal{B}}
\newcommand{\D}{\mathcal{D}}
\newcommand{\E}{\mathcal{E}}
\newcommand{\U}{\mathcal{U}}
\newcommand{\X}{\mathcal{X}}
\newcommand{\Y}{\mathcal{Y}}
\newcommand{\CC}{\mathcal{C}}
\newcommand{\bb}{\mathcal B(\ell_2)}
\newcommand{\K}{\mathcal{K}(\ell_2)}
\newcommand{\kk}{\mathcal{K}}
\newcommand{\QQ}{\mathcal{Q}(\ell_2)}
\begin{document}

\author{Piotr Koszmider}
\address{Institute of Mathematics of the Polish Academy of Sciences,
ul.  \'Sniadeckich 8,  00-656 Warszawa, Poland}
\email{\texttt{piotr.math@proton.me}}

\thanks{The  author was  partially supported by the NCN (National Science
Center, Poland) research grant no.\ 2020/37/B/ST1/02613.}

\thanks{The  author would like to thank Damian G{\l}odkowski and Clayton Suguio Hida for their feedback on 
 earlier versions
 of this   paper.}

\subjclass[2020]{46L05,  03E35,  54D30, }

\title[On masas of the Calkin algebra generated by projections]{On masas of the Calkin algebra generated by projections}

\begin{abstract}

Assuming the continuum hypothesis ({\sf CH}), we obtain complete $*$-isomorphic classification of
maximal abelian self-adjoint subalgebras (masas)  of the
  Calkin algebra $\QQ$ (bounded operators on a separable Hilbert space modulo compact operators)
generated by projections. In particular, for any compact totally disconnected Hausdorff space $K$
of weight not exceeding the continuum and not admitting $G_\delta$ points we construct under {\sf CH} a masa of $\QQ$
 which is $*$-isomorphic to the algebra $C(K)$
of complex-valued continuous functions on $K$.

This, among others, shows that masas of the Calkin algebra could have rather unexpected properties
compared to the previously known three $*$-isomorphic types of them generated by projections:
$\ell_\infty/c_0$, $L_\infty$ and $\ell_\infty/c_0\oplus L_\infty$.

It can be shown that some additional
set-theoretic hypothesis, like {\sf CH}, is necessary  for such results.
However, without making any additional set-theoretic assumptions we still construct a family
 of maximal possible cardinality (of the power set of $\R$) of pairwise non-$*$-isomorphic
 masas of $\QQ$ generated by projections and with  properties unlike the three above examples.

\end{abstract}

\maketitle

\section{Introduction}

For notation and terminology unexplained in the Introduction see Section 2.

This paper concerns maximal abelian self-adjoint subalgebras
(abbreviated as usual as {\sl masas}) of the Calkin algebra $\QQ=\bb/\K$ (\cite{calkin}), where
$\bb$ is the C*-algebra of all bounded linear operators on $\ell_2$ and $\K$ is its two-sided ideal of
compact operators (the quotient $*$-homomorphism from $\bb$ onto $\QQ$ will be denoted $\pi$). 
By the Gelfand-Naimark theorem every masa $\A$ of a unital C*-algebra, as a unital commutative C*-algebra, is $*$-isomorphic to
the C*-algebra $C(K)$ of all continuous complex-valued functions on a compact Hausdorff space $K$.
As usual $K$ will be called the {\sl Gelfand space} of $\A$. A  masa is generated by
projections (i.e. elements $P$ satisfying $P=P^2=P^*$)  if and only if its Gelfand space $K$ is totally disconnected, that is,
its clopen subsets form a basis of open sets of $K$.

All masas of $\bb$ have been characterized up to their positions in $\bb$ and
their $*$-isomorphism types long ago: $\A$ is a masa in $\B(\ell_2)$
if and only if there is a separable measure $\mu$ and a unitary $U:\ell_2\rightarrow L_2(\mu)$
such that $\A=\{UTU^*: T\in L_\infty(\mu)\}$, where $L_\infty(\mu)$ is considered as acting  on $L_2(\mu)$
by multiplication (C.6.11 \cite{ilijas-book}). In particular, all masas in $\bb$ are generated by projections.
There are only continuum many masas of $\bb$ and they fall into countably many unitary equivalence classes 
(Proposition 12.3.1 of \cite{ilijas-book}).

Masas of $\QQ$ are much more elusive.  A partial insight into their structure can be achieved through masas of $\bb$
due to a fundamental   result of
Johnson and Parrot (\cite{parrott}, 12.3.2 of \cite{ilijas-book}) which says that $\pi[\A]$ is a masa of $\QQ$ if $\A$ is a masa of $\bb$.
This allows us (see Lemma \ref{masa-masaQQ}) to characterize masas of $\QQ$ with commutative lifts to $\bb$ :
if $\A\subseteq \QQ$ is a masa with a commutative lift (i.e., such that there is
a commutative C*-subalgebra $\B$ of $\bb$ satisfying $\pi[\B]=\A$), then its
 Gelfand space 
is one of the following compact Hausdorff spaces:
\begin{enumerate}
\item $\N^*=\beta\N\setminus\N$, where $\beta\N$ is the \v Cech-Stone compactification of $\N$ with the discrete topology,
\item The Stone space $K_{Bor/\mathcal N}$ of the quotient Boolean algebra $Bor/\mathcal N$
of all Borel subsets of $[0,1]$ divided by
the ideal of Lebesgue measure zero sets,
\item The disjoint union of $\N^*$ and $K_{Bor/\mathcal N}$.
\end{enumerate}

The following results (as far as we know, the only in the literature concerning lifting masas of $\QQ$ or
their $*$-isomorphism types), 
although shedding little light on the nature of the
masas of $\QQ$, show that the above list (1) - (3) is far from complete:
\begin{enumerate}
\item[(i)]  There is a masa of $\QQ$ whose Gelfand space
$K$ is not totally disconnected (i.e., $C(K)$ is not generated by projections) and
so without commutative lift to $\bb$ (Brown, Douglas Fillmore p. 126 of \cite{bdf}, cf. 12.4.3 of \cite{ilijas-book}).
\item[(ii)]  Assuming the continuum hypothesis ({\sf CH}) there is a masa in $\QQ$
which does not have a commutative lift to $\bb$, whose Gelfand space
$K$ is totally disconnected, i.e., it is generated by projections (Anderson \cite{pathology},
 the hypothesis of {\sf CH} was much weakened 
by Shelah and Steprans in \cite{ss} but it was shown in \cite{dilip} that this hypothesis is still beyond the axioms {\sf ZFC}).
\item[(iii)] There are $2^\cc$ masas in $\QQ$ 
which do not have commutative lifts to $\bb$, where $\cc$ denotes the cardinality 
of the continuum (Akemann and Weaver;  \cite{akemann-pure}). We do not know if these masas are generated by
projections or not.

\end{enumerate}

The topic of obstructions to liftings of commutative subalgebras in $\QQ$ generated by
 projections in general (not necessarily of masas of $\QQ$)
has been investigated separately in papers like   \cite{ilijas-wofsey} (see 5.35), \cite{tristan} or \cite{vaccaro-pacific},
cf. Section 14.3 of \cite{ilijas-book}.

Our contribution to understanding masas of $\QQ$ presented in this paper is:

\vfill
\break

 \begin{theorem}\label{main}
 $ $
 \begin{enumerate}
 \item[(A)] There is a family  of $2^\cc$ pairwise non-homeomorphic totally disconnected compact
 Hausdorff spaces $K$ such that $C(K)$ is $*$-isomorphic to a masa in $\QQ$ without a commutative lift (no additional
 set-theoretic hypothesis needed). These $K$s are not F-spaces ($C(K)$s are not SAW*-algebras)\footnote{
 A compact Hausdorff space $K$ is called an F-space  if the closures of disjoint open 
 $F_\sigma$ subsets  of $K$ are disjoint. A C*-algebra
 is called SAW* if given its two orthogonal positive elements $a, b$ there is a positive $c$ such that $ac=a$ and $bc=0$
 (see \cite{saw}).} and the masas $C(K)$s do not admit conditional expectations from $\QQ$.
 \item[(B)] Assuming  the continuum hypothesis,  for a 
 compact Hausdorff totally disconnected space $K$
the C*-algebra $C(K)$ is $*$-isomorphic to a masa of $\QQ$ generated by projections
 which does not have a commutative lift if and only if  the topological weight of $K$ does not
 exceed $\cc$ and $K$ does not admit any $G_\delta$ point.
 Moreover, there are more than $\cc$ pairwise unitarily non-equivalent such masas of $\QQ$
 $*$-isomorphic to each $C(K)$ as above.
 \end{enumerate}
 \end{theorem}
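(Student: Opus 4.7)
My plan splits part (B) into necessity and sufficiency, and attacks (A) separately by a ZFC construction.

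For the necessity in (B), the weight bound $w(K)\le\cc$ is forced by the density character of $\QQ$ being $\cc$, so every C*-subalgebra has density $\le\cc$. To exclude $G_\delta$ points in $K$, I first note that $K$ has no isolated points: an isolated point would yield a non-zero minimal projection in $C(K)\subseteq\QQ$, but every non-zero projection in $\QQ$ lifts to an infinite rank projection in $\bb$ and hence splits into two orthogonal non-zero subprojections inside $\QQ$. For a non-isolated $G_\delta$ point $x$ the plan is: fix clopen $U_n\downarrow\{x\}$, lift $\pi(\chi_{U_n})$ to a decreasing sequence $P_n\in\bb$, set $P=\inf_n P_n$, and argue that $\pi(P)$ must lie in $C(K)$ by maximality; since $\{x\}$ is not clopen one has $\inf\chi_{U_n}=0$ in $C(K)$, whence $\pi(P)=0$ and $P$ is compact. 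A diagonal choice of the lifts however produces $P$ of infinite rank, a contradiction. The delicate step is showing $\pi(P)\in C(K)$; the idea is that $P$ commutes modulo $\K$ with every lift of a clopen projection from $C(K)$, because any clopen $V\subseteq K$ eventually contains $U_n$ or is eventually disjoint from $U_n$. This is the main obstacle on the necessity side.

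For the sufficiency in (B), fix $K$ totally disconnected of weight $\le\aleph_1=\cc$ without $G_\delta$ points, and under {\sf CH} enumerate the clopen algebra $\BB=\{b_\alpha:\alpha<\omega_1\}$ and a norm-dense subset of $\QQ$ as $\{a_\alpha:\alpha<\omega_1\}$. The plan is a transfinite recursion of length $\omega_1$ producing a Boolean embedding $\phi:\BB\to\mathrm{Proj}(\QQ)$ whose generated C*-subalgebra is a masa without commutative lift, in the spirit of Anderson \cite{pathology} refined as in Shelah--Stepr\={a}ns \cite{ss}. At stage $\alpha$ one extends $\phi$ to a countable sub-Boolean-algebra containing $b_\alpha$ while simultaneously diagonalizing against $a_\alpha$ (either it falls into the eventual algebra or a newly chosen $\phi(b)$ witnesses $[a_\alpha,\phi(b)]\ne 0$), and while building in obstructions to any commutative lift. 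The essential use of ``no $G_\delta$ points'' is a one-step refinement lemma: below any countable portion of the partial Boolean embedding one can refine by fresh projections in $\QQ$ realising any prescribed splitting of the corresponding clopen in $\BB$; a $G_\delta$ point would obstruct this by giving countable coinitiality in the clopen lattice locally. The main obstacle I anticipate is the limit-stage bookkeeping that keeps the generated Boolean algebra exactly isomorphic to the target $\BB$ on the current domain while meeting all diagonalization commitments. The ``more than $\cc$ pairwise unitarily non-equivalent'' clause I would then obtain by making $\cc$-many independent binary choices during the induction and applying a counting or invariant argument to separate the resulting unitary orbits.

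For part (A), without {\sf CH} one cannot run a $\cc$-step diagonalization against all of $\QQ$. My plan is a ZFC construction parametrized by almost disjoint or Luzin families $\A\subseteq[\N]^\omega$ of size $\cc$, attaching to each such family a concrete masa in $\QQ$ whose Gelfand space $K_\A$ is a Mr\'owka-like compactum built from $\A$. Such $K_\A$ contains non-trivially convergent sequences producing disjoint open $F_\sigma$'s with meeting closures, so $K_\A$ fails to be an $F$-space and $C(K_\A)$ fails to be SAW*. To obtain $2^\cc$ pairwise non-homeomorphic $K$'s I would parametrize by subsets $S\subseteq\cc$ coded into the combinatorial data (e.g.\ marking which branches of $\A$ serve as limit points) and recover $S$ from $K_\A$ through a topological invariant such as the cardinalities of certain families of convergent sequences or of specific clopen subalgebras. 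Non-existence of conditional expectations $\QQ\to C(K_\A)$ should follow because such an expectation would provide a selector on the underlying Luzin family that is ruled out in ZFC, along the lines of \cite{akemann-pure}. The principal difficulty here is balancing enough combinatorial flexibility to produce $2^\cc$ genuinely distinct homeomorphism classes against the rigidity required to ensure each generated commutative subalgebra is truly maximal in $\QQ$.
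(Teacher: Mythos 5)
Your necessity argument for (B) does work for totally disconnected $K$ (which is all the theorem needs; the paper's Proposition~\ref{gdelta} proves the stronger statement for arbitrary masas via Weyl--von Neumann), and the step you flag as delicate goes through: choosing the decreasing lifts $P_n$ so that at stage $n$ an orthonormal set $v_0,\dots,v_{n-1}$ already placed in $\mathrm{ran}(P_n)$ is retained (a finite-rank correction each time) makes $\bigcap_n\mathrm{ran}(P_n)$ infinite dimensional, while $\pi(P)$ commutes with every clopen projection of $C(K)$ (it is below those containing $x$ and orthogonal to the rest), hence with all of $C(K)$, hence lies in $C(K)$ by maximality and must be $0$ --- a contradiction. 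The real gap in (B) is on the sufficiency side. Your recursion step ``either $a_\alpha$ falls into the eventual algebra or a newly chosen $\phi(b)$ witnesses $[a_\alpha,\phi(b)]\ne 0$'' restates the goal rather than proving it: the whole difficulty is to produce a projection that \emph{simultaneously} occupies a prescribed Boolean position over the countable algebra built so far (so that the final Boolean algebra is isomorphic to $Clop(K)$, not merely contains a copy of it) \emph{and} fails to commute with $\pi(T_\alpha)$. Anderson and Shelah--Stepr\={a}ns do not control the isomorphism type, so ``in their spirit'' does not suffice. The paper's substitute is Lemma~\ref{main-lemma} (built on Lemmas~\ref{commutator} and \ref{substitution}): if a self-adjoint $T$ C*-splits an ultrafilter $\U$ of the current algebra and a fresh projection $R$ splits the same $\U$, then $R$ can be replaced by $R'$ with the identical Boolean position but $[R',\pi(T)]\notin\pi[\K]$. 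The hypothesis that $K$ has no $G_\delta$ point enters exactly here (Lemma~\ref{gdelta-split}): it guarantees that the ultrafilter C*-split by $T_\alpha$ is also split by some not-yet-used clopen set of $K$, so the anticommuting projection can be assigned a legitimate address in $Clop(K)$. None of this is in your outline, and the limit stages you worry about are in fact trivial unions.

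For (A) the proposal is essentially a wish list. ``Attaching to each almost disjoint family a concrete masa whose Gelfand space is a Mr\'owka-like compactum'' leaves open the one thing that is genuinely hard in {\sf ZFC}: proving maximality without a $\cc$-length diagonalization. The paper achieves this structurally, not combinatorially: it works inside $\A_0\cong\D(\E)\otimes M_2$, takes the algebras $\M_{\X,\mathfrak A}$ of operators whose $2\times 2$ blocks over each $X\in\X$ are diagonal in a rotated basis $\F_{\alpha_X,n}$, and reduces maximality to \emph{two} applications of the Johnson--Parrott theorem (Lemmas~\ref{crucial}, \ref{dx}, \ref{almost-masa}) plus wideness of $\X$; no Luzin-type family and no diagonalization against $\QQ$ is involved. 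Your route to non-existence of conditional expectations (``a selector on the Luzin family'') is unsubstantiated, whereas the paper's is soft: $\bb$ is Grothendieck, the property passes to quotients and complemented subspaces, and $\pi[\M_{\X,\mathfrak A}]$ fails it. Finally, to get $2^{\cc}$ homeomorphism types you need a concrete invariant recoverable from $C(K)$; the paper's is whether the projection lattice of the corner below a branch $x\in\{0,1\}^{\N}$ is isomorphic to $\wp(\N)/Fin$ or fails condition $H_\omega'$ (Lemmas~\ref{restriction-M}, \ref{captures}), and ``cardinalities of families of convergent sequences'' is not obviously computable from your unspecified construction. As written, part (A) of the proposal identifies the principal difficulty correctly but does not resolve it.
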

 \begin{proof}
 For (A) use Propositions \ref{a-prop},  \ref{two-to-c} and Lemma \ref{restriction-M}. For (B) use Propositions \ref{ch-proposition}
 and \ref{gdelta}.\end{proof} 
 So item (B) solves (under {\sf CH}) the problem of classifying masas of the 
 Calkin algebra mentioned already in the Introduction to the paper \cite{pathology} of Anderson.
 
 In fact, the condition that $K$ does not admit any $G_\delta$ point is necessary for $C(K)$
 to be a masa of $\QQ$ without any additional set-theoretic assumption (Proposition \ref{gdelta}).
 For the sufficiency of this condition 
 some additional hypothesis like {\sf CH} is necessary as it is known that consistently
 some  $C(K)$s for $K$s without $G_\delta$-points, do not embed into the Calkin algebra at all, 
 so in particular cannot be masas of $\QQ$.
 These are, for example:
 \begin{enumerate}
\item[(a)] the C*-algebra generated by the Boolean algebra of projections isomorphic to
 the Boolean algebra of all subsets of $\N$ modulo sets of asymptotic density zero
 or in general $\wp(\N)$ divided by a meager dense ideal (Vignati; Corollary 5.3.14 \cite{vignati-thesis} 
 cf. MacKenney, Vignati; Theorem 9.1 of \cite{vignati-mckenney}).
 \item[(b)]  $C(\{0,1\}^{\omega_1}\times [0,\cc])$ or in general any  C*-algebra 
containing the C*-algebra $C([0, \cc])$. Here $[0,\cc]$ is considered with the order topology (Vaccaro; \cite{vaccaro-thesis}).
  \item[(c)] $C(\beta(\N\times\N^*))$  or in general any C*-algebra containing the C*-algebra $\ell_\infty(c_0(\cc))$
(G{\l}odkowski, Koszmider; \cite{damian}). 
 \end{enumerate}
 Also the mere class of compact  spaces without $G_\delta$-points is sensitive
 to extra set theoretic assumptions (e.g. under the Proper Forcing Axiom such spaces cannot be countably tight,
 see Theorem 6.3. (iii) of \cite{elementary})
However, assuming {\sf CH} any commutative C*-algebra of density $\cc$  $*$-embeds
 into the image in $\QQ$ of the atomic masa\footnote{However, assuming the Open Coloring Axiom
 we witness the situation when  $L_\infty$ does not embed into the  image   of the atomic masa
 in $\QQ$ by \cite{measure} but of course $L_\infty$ does embed into $\QQ$.} (see
\cite{handbook-van-mill}),
in fact,  then even any C*-algebra of density $\cc$  can be $*$-embedded into $\QQ$  by \cite{universal-calkin}. 

Both items (A) and (B) of Theorem \ref{main} provide many new, compared
 to examples (1) - (3), $*$-isomorphic types of masas in $\QQ$
generated by projections:
When we may employ {\sf CH} 
we obtain maximal possible variety of masas of $\QQ$ generated by projections. By (a) - (c) such a variety cannot be obtained
without an additional set-theoretic hypothesis. By item (A) of Theorem \ref{main}
without an additional set-theoretic hypothesis
we have a family of diverse $*$-isomorphic types of masas of $\QQ$ generated by
projections of maximal possible cardinality.  There are still many fundamental questions concerning masas of $\QQ$
left open, for example: can a version of item (B) of Theorem \ref{main} be proved for masas not generated by projections?
what are exactly the masas of $\QQ$ which can be constructed without any additional set-theoretic hypothesis? 
 
 The approach to item (A) of Theorem \ref{main} is to view $\bb$ as 
 $\bb\otimes M_2$ where $M_2$ stands for the C*-algebra of 
 two by two complex matrices and to construct all the $2^\cc$ C*-algebras of item (A)
of Theorem \ref{main} as intermediate algebras in between
$\pi[\D(\E)\otimes \C I_2]$ and  $\pi[ \D(\E)\otimes M_2]$, where $\D(\E)$ is the algebra of all operators diagonal
with respect to some fixed orthonormal basis $\E$ of $\ell_2$ and $I_2$ is the two by two identity matrix.  
It is another recent case, besides \cite{pure}, showing that considering sequences of finite dimensional objects  in $\bb$ leads to
interesting discoveries in the Calkin algebra.

To add ``a twist'' which prevents the masa from
having a lift we may use a maximal  infinite almost disjoint family of infinite subset of $\N$ (i.e., 
such family that $A\cap B$ is finite for any two distinct members of $A, B$ of the family) and choose
a different lift on each subspace corresponding  to each element of the family.
This was also the approach of \cite{ss}, but we do it differently which allows us to work 
without any additional set-theoretic assumption. To obtain $2^\cc$ C*-algebras of item (A)
we consider more general families of subsets of $\N$ instead of almost disjoint families
which we call wide families (Definition \ref{def-wide}).  A clear description of the Gelfand space
of one masa from item (A) of Theorem \ref{main} is possible in the case where the
wide family is maximal almost disjoint (Proposition \ref{few-projections} (3)  and Definition \ref{def-BX}).

In fact, item (B) of Theorem \ref{main} seems more striking as it shows
that nothing (in {\sf ZFC}) prevents  masas of $\QQ$ from having rather diverse properties, including
properties quite far from the liftable examples whose Gelfand spaces are listed  in (1) - (3).
So, in particular masas in $\QQ$ may be (under {\sf CH}) of the following form:
\begin{itemize}
\item $C(K)$ for $K$ being a compact group $\{0,1\}^{\cc}$,
 which in particular has many nontrivial convergent sequences, so these masas do not
 have the Grothendieck\footnote{The Banach space $\bb$ has the Grothendieck property by \cite{pfitzner} and 
 the property is preserved by Banach quotients (Proposition 3.1.4 of \cite{kania}), also
 Banach spaces $C(K)$ lack this property if $K$ admits a nontrivial convergent sequence (Chapter 4 of \cite{kania}).}
  or the Nikodym property (cf. \cite{darst}) and so do not admit any conditional expectations from
 $\QQ$.
\item Commutative von Neumann  algebra  $L_\infty(\{0,1\}^\cc)$ (different than separable $L_\infty$),
\item $2^\cc$ pairwise non-$*$-isomorphic C*-algebras generated by 
$2^\cc$ nonisomorphic complete nonatomic Boolean algebras (e.g. from \cite{monk}); each of them admitting a
conditional expectation by Tomiyama theorem (Theorem 3.3.4 \cite{ilijas-book}) and by
the fact that they are $1$-injective Banach spaces.
\item Exotic examples of $C(K)$s like, for example, for $K$ being Efimov spaces (see e..g. \cite{efimov}) or
$C(K)$s having few linear operators (\cite{few}).
\end{itemize}
It seems that the consistency  result like in items (a) - (c)  do not refer to the bulleted
examples above and so we do not know if it is provable without additional set-theoretic hypothesis that
there are  masas of $\QQ$ with the above properties. 
 Definitely, one expects that
axioms like, for example, the Open Coloring Axiom, provide a better behaved theory of the Calkin algebra
(compared to the theory under {\sf CH}) as already indicated, for example, by the results of \cite{farah-ann}

The main difference of our argument in the proof of Theorem \ref{main} (B)
compared to the one of Anderson in \cite{pathology} is that we are
able to control the $*$-isomorphism type of the masa we construct by transfinite induction of length $\omega_1$.
This is due to replacing the Voiculescu Bicommutant Theorem (Corollary 1.9 in \cite{voiculescu}) in the argument
of Proposition \ref{ch-proposition} by our Lemma \ref{main-lemma} which  refers only to the 
commutative situation (unlike \cite{voiculescu})
but  is more sensitive to the $*$-isomorphism of the constructed masa.

The paper is organized as follows:
in the next Section 2 we review terminology, notation and state or sometimes prove some preliminary or
folklore results. 
Section 3 is devoted to proving item (B) of Theorem \ref{main}
and Section 4 is devoted to proving item (A) of Theorem \ref{main}.

\section{Preliminaries}

\subsection{Notation}

$1_X$ denotes the characteristic function of a set $X$. The restriction of a function $f$
to a set $X$  is denoted by $f|X$. $\N$ stands for the set of all nonnegative integers. $X^Y$
denotes the set of all functions from $Y$ into $X$.  If $\sigma\in \{0,1\}^n$ for some $n\in \N$
by $[\sigma]$ we mean the clopen subset of $\{0,1\}^\N$ defined by
$$[\sigma]=\{x\in \{0,1\}^\N: x|n=\sigma\}.$$   We write $A\subseteq^*B$ when 
$A\setminus B$ is finite and $A=^*B$ if $A\subseteq^*B$ and $B\subseteq^* A$.
If $(a_n)_{n\in \N}$ is a sequence of complex numbers and $X\subseteq \N$ is infinite, we say 
that $\lim_{n\in X}a_n=a$ if $\lim_{k\rightarrow \infty }a_{n_k}=a$,
where $(n_k)_{k\in \N}$ is the increasing enumeration of elements of $X$.
For $X\subseteq \N$ by $c_0(X)$ we mean the set of 
all sequences converging to $0$ and equal $0$ on $n\in \N\setminus X$; 
$c_0=c_0(\N)$.

\subsection{Boolean algebras and compact spaces}

Boolean algebra terminology is standard and should follow e.g., the book \cite{koppelberg}.
We use   $\wedge, \vee, -$ for Boolean operations and $1, 0$ for the unit and zero of a Boolean algebra.
The Boolean order is defined as $A\leq B$ if $A\wedge B=A$. 
Filters are subsets of Boolean algebras which do not contain $0$
and  which are upward closed in the Boolean order and closed under $\wedge$.
Ultrafilters are maximal filters. If $\U$ is an ultrafiler of a Boolean algebra $\A$, then
$A\in \U$ or $1-A\in \U$ for every $A\in \A$.  

We will deal with homomorphisms of Boolean algebras
and in particular Boolean monomorphisms (homomorphisms whose kernels are $\{0\}$) and
Boolean isomorphisms.  We  use the adjective Boolean before
the noun homomorphism, monomorphism etc, to distinguish it from 
$*$-homomorphisms concerning C*-algebras.

For every Boolean algebra
$\A$ there is a compact Hausdorff totally disconnected (with open basis consisting of clopen sets) 
space $K_\A$ such that there exists a
Boolean isomorphism $h: \A\rightarrow Clop(K_\A)$, where $Clop(K)$ denotes the Boolean algebra
of clopen subsets of $K$ with $\cap, \cup$ and the complement as the Boolean operations. 
$K_\A$ is called {\sl the Stone space} of $\A$. Its points correspond to
ultrafilters of $\A$. Namely, by the compactness of $K_\A$, every ultrafilter in $\A$ is of the form
$\U_x=\{h^{-1}(U): x\in U, U\in Clop(K_\A)\}$ for some $x\in K_\A$. 
In fact,  the Stone duality says that the category of compact Hausdorff totally disconnected spaces with continuous maps
is dual to the category of Boolean algebras with homomorphism
(Chapter 3 of \cite{koppelberg}, Chapter 16.3 of \cite{semadeni}).

\begin{definition}\label{def-ba-split}
Suppose that $\A\subseteq \B$ are Boolean algebras, $B\in\B\setminus\A$
and $\U\subseteq \A$ is an ultrafilter in $\A$. We say that $B$ splits $\U$
if $A\wedge B\not =0\not= A\wedge (1-B)$ whenever $A\in \U$.
\end{definition}

A point in a topological space is called a $G_\delta$ point if its  singleton  is the intersection of a countable family of open sets.
For a point in a compact Hausdorff space being $G_\delta$ is equivalent to having a countable neighborhood basis.

\begin{lemma}\label{gdelta-split} Suppose that $K$ is totally disconnected  compact Hausdorff space which does
not admit any $G_\delta$ point and $\A$ is a countable Boolean subalgebra of $Clop(K)$.
For every ultrafilter $\U\subseteq \A$ there is $V\in Clop(K)\setminus \A$ which splits $\U$.
\end{lemma}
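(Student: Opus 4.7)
The plan is to exploit the no-$G_\delta$-point hypothesis via the closed set
$$F=\bigcap_{A\in \U} A\subseteq K.$$
Since $\A$ is countable, so is $\U$. The family $\U$, viewed as a collection of clopen (hence closed) subsets of $K$, has the finite intersection property because $\U$ is a filter in the Boolean algebra $\A$ (meets of finitely many elements of $\U$ are nonzero, hence correspond to nonempty clopen sets). Compactness of $K$ then gives $F\neq \emptyset$, and by construction $F$ is a countable intersection of open sets, i.e.\ a $G_\delta$ subset of $K$.

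The first key observation is that $F$ cannot be a singleton. If $F=\{x\}$, then $\{x\}$ would be a $G_\delta$ set, and in a compact Hausdorff space this is equivalent to $x$ having a countable neighborhood base, contradicting the hypothesis that $K$ has no $G_\delta$ points. So I can pick two distinct points $x,y\in F$.

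The second key step uses total disconnectedness: since $K$ is compact Hausdorff and totally disconnected and $x\neq y$, there is a clopen set $V\in Clop(K)$ with $x\in V$ and $y\notin V$. I claim this $V$ is the one we want. First, $V$ splits $\U$ in the sense of Definition \ref{def-ba-split}: for any $A\in\U$, both $x,y$ lie in $F\subseteq A$, so $x\in A\wedge V\neq 0$ and $y\in A\wedge(1-V)\neq 0$.

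Finally, $V\notin \A$ comes for free from the ultrafilter property. Indeed, if $V\in \A$ then either $V\in\U$ or $1-V\in \U$; in the first case $F\subseteq V$ would force $y\in V$, and in the second case $F\subseteq K\setminus V$ would force $x\notin V$, both contradictions. No step here looks like a real obstacle, and the argument is symmetric and self-contained: the whole content is the forbidden-$G_\delta$-point hypothesis ruling out $|F|=1$.
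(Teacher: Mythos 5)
Your proof is correct. Every step checks out: $\U$ has the finite intersection property as a filter of nonempty clopen sets, so $F=\bigcap_{A\in\U}A$ is a nonempty $G_\delta$ subset of $K$ by compactness and countability of $\A$; if $F$ were a singleton $\{x\}$ then $x$ would literally be a $G_\delta$ point in the sense the paper defines (an intersection of countably many open sets), contradicting the hypothesis (you do not even need the detour through countable neighborhood bases here); total disconnectedness separates the two points of $F$ by a clopen $V$; and the verification that $V$ splits $\U$ and that $V\notin\A$ is exactly as you say.

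Your route differs mildly from the paper's. The paper extends $\U$ to an ultrafilter of $Clop(K)$, identifies it with a point $x\in K$, and uses the no-$G_\delta$-point hypothesis to conclude that $\U$ is not a neighborhood basis at $x$; this yields a clopen $V\ni x$ containing no member of $\U$, so each $U\in\U$ meets both $V$ (at $x$) and its complement (since $U\not\subseteq V$). You instead look at the global intersection $F=\bigcap\U$ and use the hypothesis to show $|F|\geq 2$, then separate two points of $F$. The two arguments are dual views of the same phenomenon --- your $y$ is a concrete witness to the paper's assertion that $U\not\subseteq V$ --- but yours has the advantage of being completely symmetric (you exhibit explicit points in both $A\cap V$ and $A\setminus V$) and of making the exclusion $V\notin\A$ an immediate consequence of the ultrafilter dichotomy, a point the paper's write-up leaves implicit.
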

\begin{proof} Let $\U\subseteq \A$ be an ultrafilter of $\A$. Take any ultrafilter $\U'$ of $Clop(K)$
which extends $\U$. By the compactness of $K$ the ultrafilter $\U'$ is of the form $\{U\in Clop(K): x\in U\}$
for some $x\in K$. As $K$ does not admit any $G_\delta$ point, $\U$ does not form a neighborhood basis for
$x$. So there is $V\in \U'$ such that   no $U\in \U$ satisfies $U\subseteq V$, so every $U\in \U$
satisfies $U\cap(1-V)\not=\emptyset$.
Also $U\cap V\not=\emptyset$ because $x\in U, V$.
\end{proof}

Two concrete Boolean algebras that we will be dealing with are the Boolean algebra $\wp(\N)$ 
of all subsets of nonnegative integers $\N$ and its quotient $\wp(\N)/Fin$ by the 
 the ideal $Fin$ of finite subsets of $\N$. 
$\rho$ stands for the quotient Boolean epimorphism $\rho:\wp(\N)\rightarrow \wp(\N)/Fin$.
The Stone space of $\wp(\N)$ is canonically homeomorphic to the \v Cech-Stone
compactification $\beta\N$ of $\N$ and the Stone space of $\wp(\N)/Fin$
 is canonically homeomorphic with the \v Cech-Stone
reminder $\beta\N\setminus\N$ denoted by $\N^*$.

\begin{lemma}\label{injective} Suppose that $\A, \B$ are countable Boolean algebras with $\A\subseteq \B$.
Let $h: \A\rightarrow \wp(\N)/Fin$ be a Boolean monomorphism. Then there is
and extension $h'$ of $h$ to a Boolean monomorphism  $h':\B\rightarrow \wp(\N)/Fin$.
\end{lemma}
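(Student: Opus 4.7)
The plan is a back-and-forth argument exploiting the countable saturation of $\wp(\N)/Fin$, a classical property underlying Parovi\v{c}enko-type embedding theorems. First I would reduce to the one-element extension: enumerate $\B\setminus\A=\{b_n:n\in\N\}$ and, writing $\A_n$ for the subalgebra of $\B$ generated by $\A\cup\{b_0,\ldots,b_{n-1}\}$, inductively produce Boolean monomorphisms $h_n\colon \A_n\to\wp(\N)/Fin$ extending $h_{n-1}$. The union $h'=\bigcup_n h_n$ is then the desired monomorphism on $\B=\bigcup_n\A_n$.

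For the one-step extension it suffices to handle $\B$ generated by $\A$ and a single new element $b$. Every element of $\B$ has the form $(a_1\wedge b)\vee(a_2\wedge(1-b))$ with $a_1,a_2\in\A$, so specifying $y:=h'(b)\in\wp(\N)/Fin$ determines $h'$. A direct check shows that $h'$ is a well-defined Boolean monomorphism iff $y$ satisfies the following countable collection of conditions: (W) $h(a)\leq y$ for every $a\in\A$ with $a\leq b$, and $h(a)\leq 1-y$ for every $a\in\A$ with $a\leq 1-b$; and (I) $h(a)\wedge y\neq 0$ for every $a\in\A$ with $a\wedge b\neq 0$, and $h(a)\wedge(1-y)\neq 0$ for every $a\in\A$ with $a\wedge(1-b)\neq 0$.

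I would then construct a representative $X\subseteq\N$ for $y$ by finite-step diagonalization. Fix representatives $\tilde h(a)\subseteq\N$ of $h(a)$ for each $a\in\A$ and enumerate $\A=\{c_n:n\in\N\}$. At stage $n$, having chosen $X\cap[0,N_n)$, I extend to $X\cap[0,N_{n+1})$ according to the following instructions, applied for each $c_m$ with $m\leq n$: if $c_m\leq b$, place $\tilde h(c_m)\cap[N_n,N_{n+1})$ into $X$; if $c_m\leq 1-b$, keep it out of $X$; and if $c_m\wedge b\neq 0$ and $c_m\wedge(1-b)\neq 0$ both hold, include one fresh element of $\tilde h(c_m)$ in $X$ and another in $\N\setminus X$. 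Compatibility on a given finite block follows since for $c\leq b$ and $c'\leq 1-b$ one has $c\wedge c'=0$ in $\A$, hence $h(c)\wedge h(c')=0$ and $\tilde h(c)\cap\tilde h(c')$ is finite.

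The main obstacle is producing fresh witnesses for the injectivity demand (I). Concretely, if $a\in\A$ with $a\wedge b\neq 0$, I must find, past any given threshold, an element of $\tilde h(a)$ outside $\tilde h(c)$ for the finite join $c\in\A$ of elements already committed to lie outside $X$ (all of which satisfy $c\leq 1-b$). Since $a\wedge(1-c)\geq a\wedge b\neq 0$ in $\B$ and $a\wedge(1-c)\in\A$, the monomorphism hypothesis on $h$ gives $h(a\wedge(1-c))\neq 0$, so $\tilde h(a)\setminus\tilde h(c)$ is infinite and a fresh witness exists; the symmetric argument handles the second half of (I). This is the one place where injectivity of $h$, rather than mere homomorphy, is used, and it reflects the $F$-space/countable-interpolation nature of the Stone space $\N^*$.
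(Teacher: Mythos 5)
Your argument is correct. Note that the paper does not actually prove this lemma: it records it as folklore and points to the topological version (Engelking's proof of Parovi\v{c}enko's characterization of $\N^*$) combined with Stone duality. You instead give the complete, self-contained Boolean-algebraic proof that underlies that folklore: the reduction to one-element extensions is right (each $\A_n\subseteq\A_{n+1}$ is generated by one new element and the union of the increasing chain of monomorphisms is a monomorphism); your conditions (W) are exactly Sikorski's one-point extension criterion, and (I) is exactly the statement that the extended homomorphism has trivial kernel, which for Boolean homomorphisms is equivalent to injectivity; and the diagonalization producing a representative $X$ is the standard countable-saturation argument for $\wp(\N)/Fin$. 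The two points that carry the weight are both handled correctly: compatibility of the positive and negative requirements on a block reduces to $\tilde h(c)\cap\tilde h(c')$ being finite for $c\le b$, $c'\le 1-b$ (so one only needs to choose the block endpoints $N_n$ past the finitely many conflict points of the finitely many requirements active at stage $n$ --- a routine bookkeeping detail you leave implicit but which causes no trouble), and the existence of fresh witnesses for (I) is exactly where the injectivity of $h$ is used, via $h(a\wedge(1-c))\neq 0$. Compared with the paper's citation, your proof is more elementary and makes visible where injectivity of $h$ (as opposed to mere homomorphy) enters; the paper's route buys brevity at the cost of routing through the topological literature.
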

\begin{proof} This is a folklore result which is the basic ingredient of any proof of Parovi\v cenko theorem
(e.g. \cite{koppelberg}, \cite{handbook-van-mill}), however we did not find it as an isolated lemma in the literature.
The quickest way of concluding it from the explicitly published result seems to be to use its topological version, proved
for example in Lemma (iii) on page 148 of  \cite{engelking} and apply the Stone duality.
\end{proof}

\begin{lemma}\label{boolean-lift} Suppose that $\A$ is a countable Boolean subalgebra of
$\wp(\N)/Fin$. Then there is a Boolean subalgebra $\A'$ of $\wp(\N)$ such that
$\rho|\A': \A'\rightarrow \A$ is a Boolean isomorphism.
\end{lemma}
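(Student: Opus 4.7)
The plan is to exploit the fact that a countable Boolean algebra is a union of an increasing chain of finite Boolean subalgebras, and that finite Boolean algebras are determined by their (finitely many) atoms, which can be represented by a genuine partition of $\N$.

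First I would write $\A=\bigcup_{n\in\N}\A_n$ where $\A_0\subseteq \A_1\subseteq \cdots$ is an increasing chain of finite Boolean subalgebras of $\A$ (just let $\A_n$ be the subalgebra generated by the first $n$ elements of a fixed enumeration of $\A$). The goal is to construct, by induction on $n$, an increasing chain $\A_0'\subseteq \A_1'\subseteq\cdots$ of finite Boolean subalgebras of $\wp(\N)$ such that $\rho|\A_n':\A_n'\to\A_n$ is a Boolean isomorphism; then $\A'=\bigcup_n \A_n'$ is a Boolean subalgebra of $\wp(\N)$ and $\rho|\A'$ is a Boolean monomorphism onto $\A$.

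The base case is trivial, and the inductive step is the only content. Suppose $\A_n'$ has been built, with atoms $B_1,\dots,B_k$ partitioning $\N$ and satisfying $\rho(B_i)=b_i$, where $b_1,\dots,b_k$ are the atoms of $\A_n$. Let the atoms of $\A_{n+1}$ refining $b_i$ be $b_i^1,\dots,b_i^{m_i}$, so that $\bigvee_j b_i^j=b_i$ and $b_i^j\wedge b_i^{j'}=0$ for $j\neq j'$. For each pair $(i,j)$ pick any representative $C_i^j\subseteq \N$ of $b_i^j$ and set $D_i^j=C_i^j\cap B_i$; then $\rho(D_i^j)=b_i^j\wedge b_i=b_i^j$. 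Within the fixed set $B_i$, the family $\{D_i^j:j\leq m_i\}$ consists of pairwise almost disjoint sets whose union is $=^*B_i$, so by removing finitely many elements from each $D_i^j$ to achieve genuine disjointness and then dumping the finitely many leftover elements of $B_i$ into, say, $D_i^1$, I obtain a true partition $B_i^1,\dots,B_i^{m_i}$ of $B_i$ with $\rho(B_i^j)=b_i^j$. Let $\A_{n+1}'$ be the finite Boolean subalgebra of $\wp(\N)$ generated by all the $B_i^j$. Since the $B_i^j$ across $(i,j)$ still partition $\N$ and refine the partition $B_1,\dots,B_k$, one has $\A_n'\subseteq\A_{n+1}'$, and $\rho|\A_{n+1}'$ is a Boolean isomorphism onto $\A_{n+1}$ because it sends atoms bijectively to atoms.

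The main obstacle, which is really only a bookkeeping point, is ensuring that the new algebra $\A_{n+1}'$ actually contains the old one as a subalgebra (not merely projects onto it). This is precisely why I insist that the $B_i^j$ partition the original $B_i$ rather than being arbitrary representatives: the relation $B_i=\bigsqcup_j B_i^j$ forces every element of $\A_n'$ to be a finite union of the new atoms, hence to lie in $\A_{n+1}'$. Once this coherence is maintained at every finite stage, $\A'=\bigcup_n\A_n'$ is automatically closed under Boolean operations, and $\rho|\A'$ is a monomorphism because it restricts to an isomorphism on each $\A_n'$ and these exhaust $\A'$.
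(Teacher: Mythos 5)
Your proof is correct, but it takes a genuinely different route from the paper. You argue directly by induction on an increasing chain of finite subalgebras $\A_n$ of $\A$, lifting the atoms of each $\A_n$ to a genuine partition of $\N$ that refines the previous one; the key bookkeeping point you identify (the new atoms must partition the \emph{old} lifted atoms $B_i$, not be arbitrary representatives) is exactly what makes the chain $\A_0'\subseteq\A_1'\subseteq\cdots$ coherent, and all the finite corrections ($=^*$-disjointification, absorbing leftovers) are harmless modulo $Fin$. The paper instead first enlarges $\A$ to a countable \emph{free} subalgebra of $\wp(\N)/Fin$ using its Lemma \ref{injective} (the Parovi\v cenko-type extension lemma), then lifts the free generators arbitrarily, observing that any family of representatives of an independent family is again independent, so the generated subalgebra of $\wp(\N)$ is free and $\rho$ is injective on it; one then restricts to the preimage of $\A$. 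Your argument is more elementary and self-contained (it does not invoke Lemma \ref{injective} or the embeddability of countable Boolean algebras into free ones) and produces $\A'$ with $\rho[\A']=\A$ on the nose, which is in fact the isomorphism-onto form in which the lemma is later used in Lemma \ref{calkin-lift}; the paper's argument is shorter given that Lemma \ref{injective} is already in place for other purposes.
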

\begin{proof} It is well-known that every countable Boolean algebra is a subalgebra
of a countable free Boolean algebra i.e.,  $Clop(\{0,1\}^\N)$ because the Cantor set continuously maps onto any metric compact Hausdorff space. 
So by Lemma \ref{injective} we may assume that
$\A$ is free, that is generated by a countably infinite independent set of generators $A_n$ for $n\in\N$.
Note that any set $\{A_n': n\in \N\}\subseteq  \wp(\N)$  must be independent
if $\rho(A_n')=A_n$ for every $n\in \N$. 
So the algebra $\A'\subseteq \wp(\N)$ generated by such $\{A_n': n\in \N\}$
is the desired $\A'$ by the freeness of the both algebras.

\end{proof}

\subsection{C*-algebras}
Our terminology concerning C*-algebras should follow \cite{blackadar}, \cite{murphy} and \cite{ilijas-book}.
Our models of $n$-th dimensional Hilbert spaces are $\ell_2(n)$ and the model of
an infinite dimensional  separable Hilbert space is $\ell_2$.  If $F\subseteq\N$ by $\ell_2(F)$ we denote
the subspace of $\ell_2$ of $F$-supported vectors.
For a  closed subspace $V$ of a Hilbert space $V^\perp$ denotes
its orthogonal complement.
The commutator of $T, S$ from
a C*-algebra is denoted by $[T, S]=TS-ST$. The C*-algebra of all
bounded linear operators on $\ell_2$ ($\ell_2(n)$) is denoted  $\bb$ ($\mathcal B(\ell_2(n))$). 
The ideal of compact operators on $\ell_2$ is denoted $\K$. 
The Calkin algebra $\bb/\K$ is denoted $\QQ$. The quotient
map from $\bb$ onto $\QQ$ is denoted $\pi$. We will often be using 
the following two facts about compact operators:
\begin{itemize}
\item If $A\in \K$ and $(v_n)_{n\in \N}$ is a sequence of unit, pairwise orthogonal vectors in $\ell_2$,
then $\lim_{n\in \N}\|A(v_n)\|=0$.
\item If  $(v_n)_{n\in \N}$ is an orthonormal basis of $\ell_2$ and $A\in \bb$ satisfies
$\sum_{n\in \N}\|A(v_n)\|<\infty$, then $A\in \K$.
\end{itemize}
Both of them follow from the fact that an operator in $\bb$ is compact if 
and only if it can be norm approximated by finite dimensional operators. 

If $\F\subseteq\QQ$, then 
$C^*(\F)$  denotes the C*-subalgebra of $\QQ$ generated by $\F$.
All C*-algebras we deal with will be unital subalgebras of $\bb$ or $\QQ$ or algebras
of the form $C(K)$ of complex-valued continuous functions on a compact Hausdorff space $K$.
By the Gelfand-Naimark theorem all commutative C*-algebras are $*$-isomorphic 
to some member of the later class of C*-algebras.
Recall that a $*$-isomorphism of C*-algebras is an isometry (1.3.3 of \cite{ilijas-book}).
An element $S$ of a C*-algebra is self-adjoint (a projection) if $S=S^*$ ($S=S^2=S^*$).
Recall that if $S\in \QQ$ is self-adjoint (a projection), then there is $T\in \bb$ such that $\pi(T)=S$
and $T$ is self-adjoint ($T$ is a projection) (Lemma 2.5.4 and 3.1.13 of \cite{ilijas-book}).
The spectrum of an element $T$ of a unital C*-algebra is denoted by $\sigma(T)$.
Recall that it does not grow when we pass to a quotient,
in particular $\sigma(\pi(T))\subseteq \sigma(T)$ for $T\in \bb$  (Lemma 2.5.7 in \cite{ilijas-book}).

If $\E=(e_n)_{n\in \N}$ is an orthonormal basis of $\ell_2$, then 
$\D(\E)$ denotes  the commutative C*-algebra of all operators diagonal with respect to $\E$, i.e.,
such operators that $e_n$ is its eigenvector  for every $n\in \N$. 
Weyl-von Neumann theorem asserts that for every self-adjoint $T\in \bb$
there is an orthonormal basis $\E$ and  $S\in \K$ such that $T+S\in \D(\E)$ (V. 4.1.2 (ii) of \cite{blackadar}).
In fact any countable set of commuting projections can be simultaneously diagonalized
modulo a compact operator (Lemma 12.4.4 of \cite{ilijas-book}). We will need a more specific version of this
in Lemma \ref{calkin-lift}.

\subsection{Boolean subalgebras of $\QQ$ and their liftings}

The set of projections  of a $C^*$ algebra $\A$ will be denoted
by $Proj(\A)$. All projections in C*-algebras of the form $C(K)$ for $K$ compact Hausdorff
are  characteristic functions $1_U$ for clopen $U\subseteq K$. The Boolean operations
in the Boolean algebra $Clop(K)$ can be translated to C*-algebraic operations on
the functions $1_U$ for $U\in Clop(K)$ as $1_{U\cap V}=1_U1_V$, $1_{U\cup V}=1_U+1_V-1_{U}1_V$
and $1_{K\setminus U}=1_K-1_U$. By the Gelfand-Naimark theorem this means that 
families of commuting projections in C*-algebras generate Boolean algebras with the
 above operations (see e.g. \cite{bade}, \S 30 of \cite{halmos}):

\begin{definition}\label{ba-projections} If $P, Q$ are commuting projections in a C*-algebra $\A$, then we define
\begin{itemize}
\item $P\wedge Q=PQ$,
\item $-P=I-P$,
\item $P\vee Q=P+Q-PQ$.
\end{itemize}
\end{definition}

Whenever we say that $h:\A\rightarrow \bb$ or $h:\A\rightarrow \QQ$  is a Boolean
homomorphism (monomorphism) we mean that the range of $h$ consists of commuting
projections and it is a Boolean algebra with the operations as in Definition  \ref{ba-projections} and
such that $h(1)$ is the unit of $\bb$ or $\QQ$ and $h(0)$ is the zero of $\bb$ or $\QQ$ respectively.

\begin{lemma}\label{cstar-ba-iso} Suppose that the C*-algebra $C^*(\A)$ is generated by a Boolean algebra
of  commuting projections $\A\subseteq\QQ.$  Then $C(K_\A)$ and $C^*(\A)$ are $*$-isomorphic, where $K_\A$
is the Stone space of $\A$.
\end{lemma}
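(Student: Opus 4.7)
The plan is to build an explicit $*$-isomorphism $\Phi\colon C(K_\A)\to C^*(\A)$ by defining it first on a norm-dense $*$-subalgebra of $C(K_\A)$ and then extending by continuity. Let $h\colon \A\to Clop(K_\A)$ denote the Boolean isomorphism provided by Stone duality, so $h(P)$ is a clopen subset of $K_\A$ for every $P\in\A$, with $h(1)=K_\A$ and $h(0)=\emptyset$.

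First I would isolate the $*$-subalgebra $L\subseteq C(K_\A)$ of locally constant functions, equivalently the complex linear span of the characteristic functions $1_{h(P)}$ for $P\in\A$. Since $K_\A$ is compact Hausdorff and totally disconnected and $Clop(K_\A)$ separates its points, the Stone-Weierstrass theorem gives that $L$ is norm-dense in $C(K_\A)$. Symmetrically, the complex linear span of $\A$ inside $\QQ$ is a dense $*$-subalgebra of $C^*(\A)$ by the very definition of the latter.

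Next I would define $\Phi_0\colon L\to C^*(\A)$ by $\Phi_0\big(\sum_i\lambda_i 1_{h(P_i)}\big)=\sum_i\lambda_i P_i$. For well-definedness, fix any finite set $P_1,\dots,P_n\in\A$ and let $\A_0$ be the finite Boolean subalgebra they generate, with atoms $Q_1,\dots,Q_m\in\A$; these are pairwise orthogonal projections with $Q_1+\cdots+Q_m=I$, and each $P_i$ is a sum of some of them. The clopen sets $h(Q_1),\dots,h(Q_m)$ then partition $K_\A$, so any element of $L$ built from $1_{h(P_1)},\dots,1_{h(P_n)}$ admits a unique atomic expansion $\sum_j\mu_j 1_{h(Q_j)}$ and $\Phi_0$ sends it to $\sum_j\mu_j Q_j$, independent of the chosen presentation. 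Preservation of product and involution reduces to checking it on the generators $1_{h(P)}$, and this follows from Definition~\ref{ba-projections} together with the fact that $h$ is a Boolean isomorphism.

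The only nontrivial step is isometry. On the $C(K_\A)$ side, $\|\sum_j\mu_j 1_{h(Q_j)}\|_\infty=\max_j|\mu_j|$ because the $h(Q_j)$ are nonempty and pairwise disjoint. On the C*-algebraic side, the linear span of the pairwise orthogonal nonzero projections $Q_1,\dots,Q_m\in\QQ$ is $*$-isomorphic to $\C^m$ via $e_j\mapsto Q_j$, so $\|\sum_j\mu_j Q_j\|=\max_j|\mu_j|$ as well. Hence $\Phi_0$ is an isometry on $L$ and extends uniquely to an isometric $*$-homomorphism $\Phi\colon C(K_\A)\to C^*(\A)$; its image is closed and contains $\A$, hence equals $C^*(\A)$. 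The main, and really the only, thing to be careful about is that nonzero elements of $\A$ remain nonzero projections in $\QQ$, so that atoms of finite subalgebras of $\A$ correspond bijectively to nonempty pairwise disjoint clopen sets in $K_\A$; but this is built into the hypothesis that $\A$ is a Boolean algebra with $0\in\A$ being the zero of $\QQ$, together with Stone duality applied to $\A$.
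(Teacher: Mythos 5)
Your proof is correct, but it takes a different route from the paper's. The paper argues top-down: it first invokes Gelfand--Naimark to get a $*$-isomorphism $\phi\colon C(L)\to C^*(\A)$ for some compact Hausdorff $L$, observes that $\phi^{-1}[\A]$ is a point-separating Boolean algebra of clopen sets (forcing $L$ to be totally disconnected and, by the compactness argument you yourself mention at the end of Lemma \ref{f-space}-type reasoning, forcing $\phi^{-1}[\A]$ to be \emph{all} of $Clop(L)$, via Lemma 7.6 of \cite{koppelberg}), and then concludes $L\cong K_\A$ by Stone duality. You instead build the isomorphism bottom-up: define it on the linear span of the atoms of finite subalgebras, verify multiplicativity and the key isometry $\|\sum_j\mu_j Q_j\|=\max_j|\mu_j|$ for pairwise orthogonal nonzero projections, and extend by density. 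Your argument is more elementary and self-contained (it avoids identifying the abstract Gelfand spectrum $L$ with $K_\A$), at the cost of the explicit well-definedness check on atomic expansions; the paper's argument is shorter given the cited Boolean-algebra lemmas and has the side benefit of identifying $\A$ with the \emph{full} algebra $Clop(L)$ of projections of the Gelfand space, a fact the paper reuses later (e.g.\ in the proofs of Proposition \ref{few-projections} (3) and Lemma \ref{captures}). You were right to flag that nonzero Boolean atoms must be nonzero projections of $\QQ$; this is exactly where the convention that $0\in\A$ is the zero of $\QQ$ is used, and it is the only place your isometry computation could break.
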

\begin{proof} As $C^*(\A)$ is generated by commuting self-adjoint elements, it  is commutative,
and so there is a $*$-isomorphism $\phi: C(L)\rightarrow C^*(\A)$ for some compact Hausdorff $L$.
Points of $L$ must be separated by the generating set  $\phi^{-1}[\A]$, which consists of projections, so
$\{\phi^{-1}(A): A\in \A\}$ consists of  characteristic functions of clopen sets  from a Boolean algebra
which separates points of $L$. In particular $L$ contains no connected subspace which
has more than one point, and so clopen sets whose characteristic functions are in $\phi^{-1}[\A]$
form an open base for $L$. So $\phi^{-1}[\A]$ must be the entire set of
characteristic functions of clopen subsets of $L$ (Lemma 7.6 of \cite{koppelberg})

So the Boolean algebras $Clop(K_\A)$ and $Clop(L)$ are isomorphic, both spaces are totally
disconnected and compact so   $K_\A$ and $L$ are homeomorphic (Theorem 7.10 of \cite{koppelberg})
and so $C(K_\A)$ and $C^*(\A)$ are $*$-isomorphic.
\end{proof}

\begin{lemma}\label{proj} Suppose that $\E=(e_n)_{n\in \N}$ is an ortonormal basis for $\ell_2$.
$Proj(\pi(\D(\E))=\pi[Proj(\D(\E))]$.
\end{lemma}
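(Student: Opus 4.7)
The plan is to prove both inclusions separately. The inclusion $\pi[Proj(\D(\E))]\subseteq Proj(\pi(\D(\E)))$ is immediate since $\pi$ is a unital $*$-homomorphism, so it carries projections to projections and $\pi[\D(\E)]=\pi(\D(\E))$ by definition.

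For the reverse inclusion I would take $P\in Proj(\pi(\D(\E)))$, pick any $T\in \D(\E)$ with $\pi(T)=P$, and replace $T$ by $(T+T^*)/2$, which still lies in $\D(\E)$; so I may assume $T$ is self-adjoint with real eigenvalues $(t_n)_{n\in\N}$ satisfying $Te_n=t_ne_n$. From $P=P^2$ I get $T^2-T\in \K$. Since $T^2-T$ is diagonal with diagonal entries $t_n(t_n-1)$ and $\|(T^2-T)e_n\|=|t_n(t_n-1)|$, the first bulleted fact on compact operators applied to the orthonormal sequence $(e_n)$ gives $t_n(t_n-1)\to 0$.

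Next I would define $q_n=1$ if $t_n\geq 1/2$ and $q_n=0$ otherwise, and show $q_n-t_n\to 0$ by a two-case estimate: if $t_n\leq 1/2$ then $|1-t_n|\geq 1/2$, so $|t_n|\leq 2|t_n(1-t_n)|$; if $t_n>1/2$ then $|t_n|\geq 1/2$, so $|1-t_n|\leq 2|t_n(1-t_n)|$. Let $Q\in\D(\E)$ be defined by $Qe_n=q_ne_n$; since $q_n\in\{0,1\}$, $Q$ is a projection in $\D(\E)$. The operator $T-Q$ is diagonal with entries tending to $0$, hence compact (this is the standard criterion used, for instance, in the second bulleted fact together with the norm-approximation by finite rank operators). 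Therefore $\pi(Q)=\pi(T)=P$, as required.

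The argument is elementary and I do not anticipate any real obstacle beyond the bookkeeping of the rounding estimate; conceptually the lemma is just the observation that $\pi(\D(\E))\cong \ell_\infty/c_0$ and a bounded sequence whose image in $\ell_\infty/c_0$ is idempotent must have $t_n(t_n-1)\to 0$ and hence be close (in $c_0$) to a $\{0,1\}$-valued sequence.
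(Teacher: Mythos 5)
Your proof is correct and follows essentially the same route as the paper's: reduce to the diagonal sequence, deduce $t_n(t_n-1)\to 0$ from $\pi(T)=\pi(T)^2$, and round the entries to a $\{0,1\}$-valued sequence whose diagonal operator is a compact perturbation of $T$. Your version is slightly more careful (passing to the self-adjoint part and giving an explicit quantitative rounding estimate in place of the paper's accumulation-point argument), but the idea is identical.
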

\begin{proof} The inclusion $Proj(\pi(\D(\E))\supseteq \pi[Proj(\D(\E))]$ is trivial.
For the other inclusion fix $Q\in Proj(\pi(\D(\E))$ and $A\in \D(\E)$ such that
$\pi(A)=Q$. Let $(a_n)_{n\in \N}$ be the diagonal entries of the matrix of $A$.
As $\pi(A)=\pi(A^2)$, we have $\lim_{n\in \N}(a_n-a_n^2)=0$.
This means that $0$ and $1$ are the only accumulation points of $(a_n)_{n\in \N}$, so
putting $X=\{n: |c_n-1|<1/2\}$ gives us that $\pi(R)=\pi(A)$, where $R\in \D(\E)$
has values $(1_X(n))_{n\in \N}$ on the diagonal of its matrix. Clearly $R$ is a projection in $\D(\E)$.
So $A\in \pi[Proj(\D(\E))]$.
\end{proof}

\begin{lemma}\label{pi} Suppose that $\E=(e_n)_{n\in \N}$ is an orthonormal basis for $\ell_2$.
Then $h:\wp(\N)\rightarrow Proj(\D(\E))$ sending $A\subseteq\N$ to the projection onto $\overline{span}\{e_n: n\in A\}$
is a Boolean isomorphism. Moreover there is a Boolean isomorphism 
$h':\wp(\N)/Fin\rightarrow Proj(\pi(\D(\E))$ such that $h'\circ\rho=\pi\circ h$.
\end{lemma}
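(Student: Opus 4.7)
The plan is to handle the two claims separately, the first being a direct spectral/diagonal verification and the second following from it by passing to the Calkin quotient.

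For the first claim, I would first check that $h$ is well defined: the projection $P_A$ onto $\overline{\operatorname{span}}\{e_n:n\in A\}$ satisfies $P_A(e_n)=e_n$ if $n\in A$ and $P_A(e_n)=0$ if $n\notin A$, so its matrix in $\E$ is diagonal, hence $P_A\in \D(\E)$. Next, $h$ is surjective: an arbitrary projection $Q\in \D(\E)$ has $Q(e_n)=\lambda_n e_n$ with $\lambda_n\in \sigma(Q)\subseteq\{0,1\}$, so $Q=P_A$ for $A=\{n:\lambda_n=1\}$. Injectivity is immediate, since $A\neq B$ yields some $n$ witnessing $P_A(e_n)\neq P_B(e_n)$. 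To check it is a Boolean homomorphism in the sense of Definition \ref{ba-projections}, I just verify on basis vectors the identities $P_{A\cap B}=P_AP_B$, $P_{A\cup B}=P_A+P_B-P_AP_B$, $P_{\N\setminus A}=I-P_A$, $P_\emptyset=0$, $P_\N=I$, each of which is a one-line check.

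For the moreover part, the natural definition is $h'([A])=\pi(h(A))$, where $[A]=\rho(A)$. It is well defined because $A=^* B$ implies $h(A)-h(B)$ is a diagonal operator supported on the finite set $A\triangle B$, hence a finite-rank operator, hence compact, so $\pi(h(A))=\pi(h(B))$. The compatibility $h'\circ\rho=\pi\circ h$ holds by construction, and $h'$ is a Boolean homomorphism because $h$ is one and $\pi$ is a $*$-homomorphism (the operations of Definition \ref{ba-projections} are polynomial in the ring operations). Surjectivity of $h'$ onto $\operatorname{Proj}(\pi(\D(\E)))$ is exactly the content of Lemma \ref{proj}: every projection in the quotient is $\pi(P)$ for some $P\in \operatorname{Proj}(\D(\E))$, and then $P=h(A)$ for some $A\subseteq \N$ by the first claim.

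The only genuine point to be careful about is injectivity of $h'$. Suppose $h'([A])=0$, i.e., $P_A\in \K$. Then for every infinite $X\subseteq A$ the orthonormal sequence $(e_n)_{n\in X}$ satisfies $\|P_A(e_n)\|=1$, contradicting the first bulleted fact about compact operators recalled in the preliminaries. Hence $A$ is finite, so $\rho(A)=0$. This gives injectivity and completes the proof that $h'$ is a Boolean isomorphism with $h'\circ\rho=\pi\circ h$. I do not expect any serious obstacle; essentially everything is an unpacking of definitions together with the standard compactness criterion for diagonal operators.
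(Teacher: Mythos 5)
Your proposal is correct and follows essentially the same route as the paper: a direct verification that diagonal projections correspond bijectively to subsets of $\N$ (the paper factors this through $\{0,1\}^\N$, which is only a cosmetic difference), followed by the observation that $h$ carries $Fin$ exactly onto $\K\cap Proj(\D(\E))$, which together with Lemma \ref{proj} yields the induced isomorphism $h'$ on the quotients. Your explicit injectivity check for $h'$ via the compactness criterion is just an unpacking of the paper's remark that a projection is compact if and only if it is finite-dimensional.
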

\begin{proof} 
Consider $\{0,1\}^\N$ with operations $1_A\wedge 1_B=1_A1_B=1_{A\cap B}$, 
$1_A\vee 1_B=1_A+1_B-1_A1_B=1_{A\cup B}$, $-1_A=1-1_A=1_{\N\setminus A}$. It is clear that
 $h_0: \wp(\N)\rightarrow \{0,1\}^\N$  given by
$h_0(A)=1_A$ is a Boolean isomorphism. Let $h_1: \{0,1\}^\N\rightarrow Proj(\D(\E))$
send $1_A$ to the operator whose matrix has $1_A$ on the diagonal. 
It is clear that $h_1$ is a Boolean isomorphism. As $h=h_1\circ h_0$, we conclude
the first part of the lemma.  To prove the second part
we need to note that $h[Fin]=\K\cap Proj(\D(\E))$, which
is clear since a projection is compact if and only if it is finite-dimensional.
\end{proof}

\begin{lemma}\label{calkin-lift}
Suppose that $\A\subseteq \QQ$ is a countable Boolean algebra of projections. Then there 
is an orthonormal basis $\E$ of $\ell_2$ such that
\begin{enumerate}
\item $\A\subseteq \pi[Proj(\D(\E))]$.
\item  There is Boolean subalgebra $\B$ of $\D(\E)$ such
$\pi|\B: \B\rightarrow \A$ is a Boolean isomorphism.
\end{enumerate}
\end{lemma}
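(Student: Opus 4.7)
The plan is to first use simultaneous diagonalization modulo compacts on the countably many projections of $\A$, which yields (1); then to transport the Boolean lifting property of $\wp(\N)/Fin$ (Lemma \ref{boolean-lift}) back to $\D(\E)$ through the canonical identifications of Lemma \ref{pi}, which yields (2).

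For (1), enumerate $\A=\{P_n:n\in\N\}$ and pick projection lifts $\tilde P_n\in Proj(\bb)$ with $\pi(\tilde P_n)=P_n$, which is possible by the projection-lifting fact recalled in the preliminaries. The $\tilde P_n$ pairwise commute modulo $\K$. Invoking the simultaneous diagonalization result for countably many commuting-mod-compact projections (Lemma 12.4.4 of \cite{ilijas-book}), there exist an orthonormal basis $\E$ of $\ell_2$ and diagonal projections $Q_n\in Proj(\D(\E))$ such that $\tilde P_n-Q_n\in\K$ for every $n\in\N$. Hence $P_n=\pi(Q_n)\in\pi[Proj(\D(\E))]$, giving $\A\subseteq\pi[Proj(\D(\E))]$.

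For (2), recall from Lemma \ref{pi} the Boolean isomorphisms $h:\wp(\N)\to Proj(\D(\E))$ and $h':\wp(\N)/Fin\to Proj(\pi[\D(\E)])$ satisfying $h'\circ\rho=\pi\circ h$. By (1) together with Lemma \ref{proj}, $\A\subseteq Proj(\pi[\D(\E)])$, so $\A_0:=(h')^{-1}[\A]$ is a countable Boolean subalgebra of $\wp(\N)/Fin$. Lemma \ref{boolean-lift} then supplies a Boolean subalgebra $\A_0'\subseteq\wp(\N)$ on which $\rho|\A_0':\A_0'\to\A_0$ is a Boolean isomorphism. Setting $\B:=h[\A_0']\subseteq Proj(\D(\E))$ produces a Boolean subalgebra of $\D(\E)$, and the identity $\pi\circ h=h'\circ\rho$ yields $\pi|\B = h'\circ(\rho|\A_0')\circ(h|\A_0')^{-1}$, a composition of three Boolean isomorphisms with image $\A$; hence $\pi|\B:\B\to\A$ is the required Boolean isomorphism.

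The only nontrivial step is the appeal to Lemma \ref{boolean-lift}: the naive choice of one preimage per generator of $\A_0$ under $\rho$ need not produce a Boolean subalgebra, since $\rho$ is very far from injective, and the coherence needed to get a Boolean monomorphic lift is precisely the content of that lemma (through the reduction to the free countable Boolean algebra and the automatic transfer of independence from $\wp(\N)/Fin$ to $\wp(\N)$). Everything else is formal transport through the Boolean isomorphisms $h$ and $h'$, and no set-theoretic hypothesis beyond {\sf ZFC} is needed.
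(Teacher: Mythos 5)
Your proof is correct and follows essentially the same route as the paper: both parts rest on Lemma 12.4.4 of \cite{ilijas-book} for the diagonalization (the paper phrases it via the real rank zero algebra $C^*(\A)$ having a diagonal lift, you via simultaneous diagonalization of countably many commuting-mod-compact projection lifts, which is the same cited result) and on Lemma \ref{boolean-lift} transported through the identifications of Lemmas \ref{pi} and \ref{proj} for the Boolean lift.
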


\begin{proof} Let $C^*(\A)\subseteq \QQ$ be the C*-subalgebra of $\QQ$ generated by $\A$.
It is commutative as $\A$ is commutative and self-adjoint and it is real-rank zero as $\A$ is $*$-isomorphic to $C(K)$
for $K$ totally disconnected Stone space of $\A$ by Lemma \ref{cstar-ba-iso}.
So by Lemma 12.4.4 (1) of  \cite{ilijas-book} which says that commutative real rank zero
C*-subalgebras of $\QQ$ have diagonal lifts  there is an orthonormal basis $\E$ of $\ell_2$
and  a C*-algebra $\B\subseteq \D(\E)$ such that $\pi[\B]=C^*(\A)$.
By Lemma \ref{proj} we obtain (1)

To prove item (2) note that by the second part of Lemma \ref{pi} the Boolean
algebra $\pi(Proj(\D(\E))$ is an isomorphic copy
of  $\wp(\N)/Fin$, so we are in the situation of 
 Lemma \ref{boolean-lift} and  applying this lemma we obtain the required 
Boolean subalgebra $\B$ of $\D(\E)$.

\end{proof}

\begin{lemma}\label{injective-Q}  Suppose that $\A, \B$ be countable Boolean algebras with $\A\subseteq \B$.
Let $h: \A\rightarrow Proj(\QQ)$ be a Boolean monomorphism. Then there is
and extension $h'$ of $h$ to a Boolean monomorphism  $h':\B\rightarrow Proj(\QQ)$.
\end{lemma}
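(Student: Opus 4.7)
The plan is to reduce Lemma \ref{injective-Q} to its classical (non-Calkin) counterpart Lemma \ref{injective} by lifting $h[\A]$ into a diagonal subalgebra and then transporting the extension obtained in $\wp(\N)/Fin$ back to $\QQ$. The whole argument is a diagram chase, and the three preliminary lemmas Lemma \ref{calkin-lift}, Lemma \ref{pi} and Lemma \ref{injective} are precisely the ingredients needed.

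First I would apply Lemma \ref{calkin-lift} to the countable Boolean algebra $h[\A]\subseteq Proj(\QQ)$ to obtain an orthonormal basis $\E$ of $\ell_2$ and a Boolean subalgebra $\C_0\subseteq\D(\E)$ such that $\pi|\C_0\colon \C_0\to h[\A]$ is a Boolean isomorphism. Setting $\tilde h=(\pi|\C_0)^{-1}\circ h\colon \A\to Proj(\D(\E))$ gives a Boolean monomorphism that lifts $h$, i.e., $\pi\circ \tilde h=h$. Next I would invoke Lemma \ref{pi} to fix Boolean isomorphisms $h_\E\colon \wp(\N)\to Proj(\D(\E))$ and $h'_\E\colon \wp(\N)/Fin\to Proj(\pi(\D(\E)))$ satisfying $h'_\E\circ\rho=\pi\circ h_\E$. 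Then $k=h_\E^{-1}\circ\tilde h\colon \A\to \wp(\N)$ is a Boolean monomorphism, and the composition $\rho\circ k\colon \A\to \wp(\N)/Fin$ satisfies
\[
h'_\E\circ(\rho\circ k)=\pi\circ h_\E\circ k=\pi\circ\tilde h=h,
\]
hence is injective because $h$ is and $h'_\E$ is a Boolean isomorphism.

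Now the key move is to apply Lemma \ref{injective} to the countable inclusion $\A\subseteq\B$ and the Boolean monomorphism $\rho\circ k\colon \A\to\wp(\N)/Fin$, yielding a Boolean monomorphism $g\colon \B\to\wp(\N)/Fin$ extending $\rho\circ k$. Finally I would set
\[
h'=h'_\E\circ g\colon \B\longrightarrow Proj(\pi(\D(\E)))\subseteq Proj(\QQ).
\]
This is a Boolean monomorphism since $h'_\E$ is a Boolean isomorphism and $g$ is a Boolean monomorphism, and for $a\in\A$ the computation
\[
h'(a)=h'_\E(g(a))=h'_\E(\rho(k(a)))=\pi(h_\E(k(a)))=\pi(\tilde h(a))=h(a)
\]
shows $h'|\A=h$.

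There is no real obstacle: the entire difficulty has been packaged into Lemma \ref{calkin-lift} (which is where the hard fact that commutative real-rank zero subalgebras of $\QQ$ admit diagonal lifts is used) and Lemma \ref{injective} (the countable Parovi\v cenko-type extension lemma in $\wp(\N)/Fin$). The only thing to watch is that the monomorphism $\rho\circ k$ in $\wp(\N)/Fin$ is genuinely injective before feeding it into Lemma \ref{injective}, and this is exactly what the identity $h'_\E\circ(\rho\circ k)=h$ guarantees.
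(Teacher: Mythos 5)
Your proposal is correct and follows essentially the same route as the paper: lift $h[\A]$ to a diagonal subalgebra via Lemma \ref{calkin-lift}, identify $\pi[Proj(\D(\E))]$ with $\wp(\N)/Fin$ via Lemma \ref{pi}, and extend using Lemma \ref{injective}. Your write-up is in fact somewhat more explicit about the diagram chase than the paper's, but the underlying argument is identical.
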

\begin{proof}
By Lemma \ref{calkin-lift} there is an orthonormal basis $\E$ of $\ell_2$ and a Boolean 
subalgebra $\A'$ of $Proj(\D(\E))$ such that $\pi|\A':\A'\rightarrow \A$ is a Boolean isomorphism.
By Lemma \ref{pi} the Boolean algebra $\pi[Proj(\D(\E))]$ is isomorphic to $\wp(\N)/Fin$.
So $\A'$ can be considered as a subalgebra of this isomorphic copy of $\wp(\N)/Fin$ inside $\QQ$.
So we  may use Lemma \ref{injective} to extend $h$ to a Boolean monomorphism 
$h':\B\rightarrow \pi[Proj(\D(\E))]\subseteq \QQ$.
\end{proof}

\begin{definition}\label{def-split}Suppose that $\A\subseteq \QQ$ is a Boolean algebra of projections
and $T\in \bb$ is self-adjoint such that $\pi(T)$ commutes with all elements of
$\A$. Let $\U$ be an ultrafilter of $\A$. We say that $T$ C*-splits $\U$ if there are
two distinct multiplicative states  $x_1, x_2$ of $C^*(\A\cup\pi(T))$  such that
$$\{A\in \A: x_1(A)=1\}=\{A\in \A: x_2(A)=1\}=\U.$$
\end{definition}

\begin{lemma}\label{split-point} Suppose that $\A\subseteq \QQ$ is a Boolean algebra of projections
and $T\in \bb$ is self-adjoint such that $\pi(T)$ commutes with all elements of
$\A$ but $\pi(T)\not\in C^*(\A)$. Then there is an ultrafilter $\U$ 
 of $\A$ which is $C^*$-split by $T$.
\end{lemma}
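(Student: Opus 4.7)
The plan is to argue by contrapositive: if no ultrafilter of $\A$ is C*-split by $T$, then the Gelfand spaces of $C^*(\A)$ and $C^*(\A\cup\pi(T))$ are homeomorphic in a canonical way, which forces $\pi(T)\in C^*(\A)$.

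First I would observe that $C^*(\A\cup\pi(T))$ is commutative, since $\pi(T)$ is self-adjoint and commutes with every element of $\A$ (and $\A$ is already a family of commuting projections). By Gelfand--Naimark we can write $C^*(\A\cup\pi(T))\cong C(L)$, where $L$ is the compact Hausdorff space of multiplicative states. Similarly, by Lemma \ref{cstar-ba-iso}, $C^*(\A)\cong C(K_\A)$, where $K_\A$ is the Stone space of $\A$ whose points correspond bijectively to ultrafilters of $\A$. The unital inclusion $C^*(\A)\hookrightarrow C^*(\A\cup\pi(T))$ dualizes to a continuous surjection $\phi\colon L\to K_\A$ given by restriction of multiplicative states.

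Next I would identify the fibers of $\phi$ with the object in Definition \ref{def-split}. For any multiplicative state $x$ of $C^*(\A\cup\pi(T))$ and any projection $A\in\A$, we have $x(A)=x(A)^2\in\{0,1\}$, and the set $\U_x=\{A\in\A:x(A)=1\}$ is an ultrafilter of $\A$ which is exactly the ultrafilter corresponding to $\phi(x)\in K_\A$. Therefore $T$ C*-splits some ultrafilter $\U$ of $\A$ if and only if there exist two distinct points of $L$ with the same image under $\phi$, i.e., $\phi$ is not injective.

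Now suppose, toward contradiction, that no ultrafilter of $\A$ is C*-split by $T$. Then $\phi$ is injective; being a continuous surjection between compact Hausdorff spaces, $\phi$ is a homeomorphism. Pulling back, the restriction $*$-homomorphism $C(K_\A)\to C(L)$ dual to $\phi$ is a $*$-isomorphism, which simply says that the inclusion $C^*(\A)\hookrightarrow C^*(\A\cup\pi(T))$ is surjective. Hence $\pi(T)\in C^*(\A)$, contradicting the hypothesis. The only point that needs a little care is the identification of the restriction map with $\phi$ and the fact that elements of $\A$ are sent by any multiplicative state to $\{0,1\}$; once these two bookkeeping items are in place the argument is essentially a three-line duality observation, and I do not foresee any serious obstacle.
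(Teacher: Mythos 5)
Your proof is correct and is essentially the paper's argument: both reduce the lemma to the fact that a proper unital C*-subalgebra of $C(L)$ must fail to separate points of $L$, and both identify characters restricted to $\A$ with ultrafilters of $\A$ (using that multiplicative states take values in $\{0,1\}$ on projections). The only cosmetic difference is that you run the contrapositive through Gelfand duality (injective restriction map $\Rightarrow$ homeomorphism $\Rightarrow$ surjective inclusion), whereas the paper invokes the Stone--Weierstrass theorem directly to produce two non-separated points $y_1\neq y_2$ of $L$ and takes $x_i=\delta_{y_i}\circ\phi$.
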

\begin{proof} As 
$\pi(T)$ commutes with all elements of
$\A$, it commutes with the entire $C^*(\A)$. Since $T$ is self-adjoint, this means that
$C^*(\A\cup\pi(T))$ is commutative and hence 
there is  a $*$-isomorphism $\phi: C^*(\A\cup\pi(T))\rightarrow C(L)$ for some   compact Hausdorff $L$.
Since $\pi(T)\in  C^*(\A\cup\pi(T))\setminus C^*(\A)$ this means, 
by the complex Stone-Weierstrass theorem, that $\phi[C^*(\A)]$ cannot
separate points of $L$, i.e., there are $y_1, y_2\in L$
 which are not separated by elements of  $\phi[C^*(\A)]$.
Put $x_i=\delta_{y_i}\circ \phi$ for $i=1,2$.   The non-separation by
$C^*(\A)$ means that $x_1(A)=\delta_{y_1}(\phi(A))=\delta_{y_2}(\phi(A))=x_2(A)$ for all $A\in \A$, so
put $x=x_1|C^*(\A)=x_2|C^*(\A)$. As $x_i$s are multiplicative states, so
is $x$. Let $\U=\{A\in \A: x(A)=1\}$. Note that
as $x(AB)=x(A)x(B)$ holds for every $A, B\in \A$, the set $\U$ is a filter.
Also $1=x(1)=x(A\vee(1-A))=x(A)+x(1-A)$.  As $A, 1-A$ are projections
this implies that $A\in \U$ or $1-A\in \U$ for every $A\in \A$, hence $\U$ is an ultrafilter, as needed.
\end{proof}

\begin{lemma}\label{c-split}Suppose that $\A\subseteq \QQ$ is a Boolean algebra of projections
and $T\in \bb$ is self-adjoint such that $\pi(T)$ commutes with all elements of
$\A$. Suppose that  $\U$ is an ultrafilter of $\A$ which is C*-split by $T$.
Then there are reals $c_1<c_2$ such that $c_1, c_2\in \sigma(A\pi(T)A)$ for every
$A\in \U$.
\end{lemma}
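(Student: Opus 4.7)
The plan is to convert the two distinct multiplicative states witnessing the $C^*$-splitting into two distinct real numbers that will serve as $c_1<c_2$, and then to show that these numbers lie in every spectrum $\sigma(A\pi(T)A)$ for $A\in\U$. Since $\pi(T)$ is self-adjoint and commutes with all elements of $\A$, the algebra $C^*(\A\cup\pi(T))\subseteq\QQ$ is a commutative unital C*-algebra. Let $x_1,x_2$ be the two distinct multiplicative states from Definition \ref{def-split} and set $c_i:=x_i(\pi(T))$ for $i=1,2$. Because $\pi(T)=\pi(T)^*$, each $c_i$ is real. By hypothesis $x_1$ and $x_2$ agree on $\A$, so if one additionally had $c_1=c_2$, they would also agree on $\pi(T)$, hence on the entire generating set $\A\cup\{\pi(T)\}$ of $C^*(\A\cup\pi(T))$, contradicting $x_1\neq x_2$. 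Thus $c_1\neq c_2$, and after swapping if necessary, $c_1<c_2$.

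Next I would fix any $A\in\U$, observe that $x_i(A)=1$, and use multiplicativity of each $x_i$ to compute $x_i(A\pi(T)A)=x_i(A)\,x_i(\pi(T))\,x_i(A)=c_i$. Every multiplicative state on a commutative unital C*-algebra is a character, i.e., evaluation at a point of its Gelfand space; consequently, its value on an element lies in the spectrum of that element computed \emph{in that commutative algebra}. This immediately gives $c_1,c_2\in\sigma_{C^*(\A\cup\pi(T))}(A\pi(T)A)$.

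The last step is to pass from the subalgebra spectrum to the spectrum $\sigma(A\pi(T)A)$ computed in $\QQ$. The element $A\pi(T)A$ is self-adjoint, since $A$ and $\pi(T)$ are self-adjoint and commute (in fact $A\pi(T)A=A\pi(T)$ as $A^2=A$, but we do not need this). Hence the standard invariance of spectra under passage to a unital C*-subalgebra, which for self-adjoint (or more generally normal) elements is a direct consequence of the continuous functional calculus, yields $\sigma_{C^*(\A\cup\pi(T))}(A\pi(T)A)=\sigma_\QQ(A\pi(T)A)$, so $c_1,c_2\in\sigma(A\pi(T)A)$ as required.

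There is no real obstacle here: the only point requiring a brief sanity check is that the states $x_i$ are defined on $C^*(\A\cup\pi(T))$ rather than on all of $\QQ$, but since $A\pi(T)A$ already belongs to this subalgebra, the argument proceeds cleanly. The lemma is really a translation of ``$C^*$-splitting'' into concrete spectral language: the two points in the Gelfand space of $C^*(\A\cup\pi(T))$ witnessing the splitting sit above $\U$ and are separated precisely by the value of $\pi(T)$.
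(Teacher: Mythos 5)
Your proof is correct and follows essentially the same route as the paper: extract the two characters from the definition of C*-splitting, set $c_i=x_i(\pi(T))$, observe they must differ since the states agree on $\A$ but are distinct, compute $x_i(A\pi(T)A)=c_i$ by multiplicativity, and conclude spectrum membership. Your explicit appeal to spectral permanence for the self-adjoint element $A\pi(T)A$ when passing from $\sigma_{C^*(\A\cup\pi(T))}$ to $\sigma_{\QQ}$ is in fact a cleaner rendering of the paper's rather terse non-invertibility argument.
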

\begin{proof} By Definition \ref{def-split}
there are
two distinct multiplicative states $x_1, x_2$ of $C^*(\A\cup\pi(T))$  such that
$$\{A\in \A: x_1(A)=1\}=\{A\in \A: x_2(A)=1\}=\U.$$
Since $\A=\{A\in \A: A\in \U\  \hbox{or}\ 1-A\in \U\}$, we have $x_1|\A=x_2|\A$
which means that 
$$c_1=x_1(\pi(T))\not=x_2(\pi(T))=c_2\leqno (*)$$ 
as $x_1$ and $x_2$ are distinct. The numbers $c_i$ are real as $\pi(T)$ is self-adjoint.
$x_i(A\pi(T)A)=c_i$ for every $A\in \U$ by Definition \ref{def-split} and
since $x_i$s are multiplicative for $i=1,2$. 
Clearly $(A\pi(T)A)-c_iI$ is not invertible, and so $c_i\in \sigma(A\pi(T)A)$ for $i=1,2$, since
the invertibility would yield $1=x_i(I)=x_i((A\pi(T)A)-c_iI)S)=0r$ for some $S\in C^*(\A\cup\pi(T))$
and $r\in \C$ for every $\A\in \U$. By renaming $c_1, c_2$ we can have $c_1<c_2$.
\end{proof}

\subsection{Masas in $\bb$ and in $\QQ$}

Masa in a C*-algebra $\A$ is a maximal (with respect to inclusion) commutative C*-subalgebra of $\A$.
We will often use a theorem of Johnson and Parrott (see \cite{parrott}) which says that
if $\A\subseteq\bb$ is a masa of $\bb$, then $\pi[\A]$ is a masa of $\QQ$. 
While constructing masas of $\QQ$ the following lemma will be very useful:

\begin{lemma}[p. 72 of \cite{ss}]\label{sa-masa} If $\A$ is a commutative C*-algebra of $\QQ$ then
$\A$ is a masa of $\QQ$ if and only if for every self-adjoint $S\in \QQ\setminus \A$ there is 
$R\in \A$ which does not commute with $S$.
\end{lemma}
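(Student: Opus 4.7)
The plan is to prove both directions of the equivalence separately, with the nontrivial direction being the ``if'' implication, which will be reduced to the ``only if'' direction via decomposition into self-adjoint parts.

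For the forward implication, I would suppose $\A$ is a masa of $\QQ$ and take a self-adjoint $S \in \QQ \setminus \A$. If, toward contradiction, $S$ commuted with every $R \in \A$, then the C*-algebra $C^*(\A \cup \{S\})$ generated by $\A$ together with $S$ would be commutative (since $S$ is self-adjoint and commutes with the self-adjoint generators of $\A$, and commutativity of generators passes to the generated C*-algebra). This would be a commutative C*-subalgebra of $\QQ$ properly containing $\A$, contradicting maximality. So some $R \in \A$ must fail to commute with $S$.

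For the backward implication, I would suppose that the commutation condition holds and assume, toward a contradiction, that $\A$ is not a masa. Then there exists a commutative C*-subalgebra $\B$ of $\QQ$ with $\A \subsetneq \B$. Choose any $T \in \B \setminus \A$ and write $T = T_1 + i T_2$ with $T_1 = (T+T^*)/2$ and $T_2 = (T - T^*)/(2i)$, both self-adjoint. Since $\B$ is a C*-algebra, $T^* \in \B$ and hence $T_1, T_2 \in \B$. Because $T \notin \A$, at least one of $T_1, T_2$ fails to lie in $\A$; call it $S$. Then $S$ is a self-adjoint element of $\QQ \setminus \A$, and because $\B$ is commutative and $\A \subseteq \B$, the element $S$ commutes with every $R \in \A$, contradicting the hypothesis.

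I do not expect any significant obstacle here: the only subtle point is recognizing that while the hypothesis only grants the noncommutation witness for self-adjoint elements of $\QQ \setminus \A$, this is enough, because any candidate element witnessing failure of maximality can be decomposed into its real and imaginary parts, and at least one of these self-adjoint pieces must land outside $\A$ while still lying in the commutant of $\A$.
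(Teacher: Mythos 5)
Your proof is correct and follows essentially the same route as the paper's: the forward direction via commutativity of $C^*(\A\cup\{S\})$ for self-adjoint $S$ commuting with $\A$, and the reverse direction via the decomposition $T=T_1+iT_2$ into self-adjoint parts, one of which must lie outside $\A$ while commuting with all of $\A$. The only (immaterial) difference is that the paper enlarges $\A$ to a full masa $\B$ before picking $T$, whereas you take any commutative $\B\supsetneq\A$ directly from the failure of maximality.
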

\begin{proof}
For the forward implication suppose that  there is a self-adjoint $S\in \QQ\setminus \A$ which  
commutes with all $A\in \A$. As $S=S^*$ we have that $C^*(\A\cup\{S\})$ is commutative as well, 
which shows that $\A$ is not a masa of $\QQ$.

For the reverse implication 
suppose that $\A$ is  a commutative C*-subalgebra of $\QQ$  which is not a masa  and we will
find a self-adjoint element of $ \QQ\setminus \A$ which  
commutes with the entire $\A$. Let $\A\subseteq \B\subseteq \QQ$ be a masa of $\QQ$ such that there
is $T\in \B\setminus\A$.  Then $T^*\in \B$ and both $T, T^*$ commute with
all elements of $\A$ and $TT^*=T^*T$ 
as they are in a bigger commutative algebra $\B\supseteq\A$. So self-adjoint $S=(T+T^*)/2\in \B$
and  $S'=(T-T^*)/2i\in \B$ commute with all elements of $\A$ as well.
If $S, S'\in \A$, then so is $T=S+iS'$, which is not the case, so one of $S, S'\not\in \A$
is the required self-adjoint element.
\end{proof}

\begin{lemma}\label{masa-masaQQ} Suppose that $\A\subseteq\QQ$ is a masa of $\QQ$ which has a commutative lift.
Then $\A$ is the image under $\pi$ of a masa of $\bb$.  
\end{lemma}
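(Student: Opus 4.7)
The plan is essentially a one-step argument using Zorn's lemma together with the Johnson--Parrott theorem already cited in this subsection.

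By hypothesis there is a commutative C*-subalgebra $\B\subseteq\bb$ such that $\pi[\B]=\A$. Commutative C*-subalgebras of $\bb$ are partially ordered by inclusion, and chains have commutative norm-closure, so by Zorn's lemma we may choose a masa $\M$ of $\bb$ with $\B\subseteq\M$. This gives us our candidate; it remains to verify $\pi[\M]=\A$.

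First, $\pi[\M]$ is commutative since $\M$ is, and $\A=\pi[\B]\subseteq\pi[\M]$. Second, by the Johnson--Parrott theorem (recalled at the start of this subsection), $\pi[\M]$ is itself a masa of $\QQ$. So we have two commutative C*-subalgebras of $\QQ$, namely $\A$ and $\pi[\M]$, with $\A\subseteq\pi[\M]$, and $\A$ is already maximal commutative. Hence $\A=\pi[\M]$, as required.

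There is no serious obstacle here: the content of the lemma is simply the observation that whatever commutative lift $\B$ we start with can be enlarged to a masa of $\bb$ without leaving the preimage of $\A$, and this is forced by combining the maximality of $\A$ in $\QQ$ with the fact (Johnson--Parrott) that the image of a masa of $\bb$ is automatically a masa of $\QQ$.
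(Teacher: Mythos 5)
Your argument is correct and is essentially the paper's own proof: extend the commutative lift $\B$ to a masa $\M$ of $\bb$ (Zorn), apply Johnson--Parrott to see $\pi[\M]$ is a commutative subalgebra of $\QQ$ containing $\A$, and conclude $\pi[\M]=\A$ by maximality of $\A$. No issues.
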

\begin{proof}
Let $\B\subseteq \bb$ be a  commutative C*-algebra such that $\pi[\B]=\A$.
Then there is a masa $\B'\subseteq \bb$ such that $\B\subseteq \B'$.
By the Johnson and Parrott theorem  $\pi[\B']\supseteq\A$ is a masa of $\QQ$, so
$\pi[\B']=\A$.
\end{proof}

\section{Masas of $\QQ$ in the presence of the Continuum Hypothesis.}

This section is devoted to the proof of the main ingredient of item (B) of Theorem \ref{main} which is
Proposition \ref{ch-proposition}. After a  simple Lemma \ref{commutator}
we prove the main technical lemma (\ref{substitution}) concerning $\bb$ which after passing
to $\QQ$ gives us the main lemma (\ref{main-lemma}). Given a countable Boolean algebra $\A$
of projections in $\QQ$ and a projection $R\in \QQ$ commuting with $\A$ and a self-adjoint $S\in\QQ$ commuting with $\A$
such that $R$ and $S$ split the same ultrafilter of $\A$ (see Definitions \ref{def-ba-split} and \ref{def-split}),
Lemma \ref{main-lemma} allows us to replace $R$ with $R'$ which has the same position
with respect to $\A$ as $R$ but $R'$ does not commute with $S$. 
This is fundamental for the transfinite induction in the proof of Proposition \ref{ch-proposition},
where we adopt a general scenario from \cite{pathology} while replacing the use of the Voiculescu
Bicommutant Theorem by Lemma \ref{main-lemma}.

\begin{lemma}\label{commutator} Let $T\in \bb$ of norm one be 
self-adjoint and $c_1<c_2$ satisfy $c_1, c_2\in \sigma(T)$.
Let $\E=(e_n)_{n\in \N}$ be an orthonormal basis of $\ell_2$.  
Then for every $\varepsilon>0$ there is a finite $F\subseteq \N$,
and a projection  $P\in \bb$ 
 such that
$$\|[P_FTP_F, P]\|>{{c_2-c_1\over2}-\varepsilon}$$
and $\langle P(e_n), e_m\rangle\not=0$ implies $n, m\in F$ for every $n,m\in \N$,
where $P_F$ is the projection in $\bb$ onto the space spanned by $\{e_n: n\in F\}$.
\end{lemma}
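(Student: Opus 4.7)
The plan is to reduce to the finite-dimensional model computation: on $\C^2$ with $T = \mathrm{diag}(c_1,c_2)$ and $P$ the rank-one projection onto $(1,1)/\sqrt{2}$, a direct computation gives $\|[T,P]\| = (c_2-c_1)/2$. The entire proof consists of transferring this picture from $\C^2$ to a finitely supported subspace of $\ell_2$ using approximate eigenvectors.

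First, since $T$ is self-adjoint and $c_1, c_2 \in \sigma(T)$, for any preassigned $\delta > 0$ I can find unit vectors $u_1, u_2 \in \ell_2$ with $\|Tu_i - c_iu_i\| < \delta$ for $i=1,2$. The standard argument (using $\langle Tu_1,u_2\rangle = \langle u_1,Tu_2\rangle$) gives
\[
|c_1 - c_2|\cdot|\langle u_1,u_2\rangle| \leq 2\delta,
\]
so by taking $\delta$ small (depending on $\varepsilon$ and $c_2 - c_1$) the $u_i$ are nearly orthogonal. A small Gram--Schmidt correction then produces genuinely orthonormal $\tilde{u}_1,\tilde{u}_2$ close to $u_1,u_2$ in norm, still satisfying $\|T\tilde{u}_i - c_i \tilde{u}_i\| < \delta'$ for a slightly larger $\delta'$.

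Next, since $(e_n)_{n\in\N}$ is an orthonormal basis, by truncating $\tilde u_1, \tilde u_2$ to their coordinates in a sufficiently large finite $F \subseteq \N$ and then renormalizing/re-orthogonalizing, I obtain orthonormal $v_1, v_2 \in \ell_2(F)$ with $\|(P_FTP_F) v_i - c_i v_i\| < \delta''$ (the compression $P_F T P_F v_i$ agrees with $Tv_i$ up to a correction bounded by $\|T\|=1$ times the norm of the tail, hence arbitrarily small when $F$ is large). Let $P$ be the rank-one projection onto $\mathrm{span}\{w\}$ where $w = (v_1+v_2)/\sqrt{2}$; extend it by zero outside $\ell_2(F)$. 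Then $P$ is a projection in $\bb$ whose matrix entries $\langle Pe_n,e_m\rangle$ vanish unless $n,m \in F$, as required.

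Finally I evaluate the commutator on $w$. Writing $T' = P_F T P_F$ and using $Pv_i = \tfrac{1}{2}(v_1+v_2)$ together with $T'v_i \approx c_i v_i$, a direct computation gives
\[
[T',P]w = T'w - PT'w \approx \frac{c_2-c_1}{2\sqrt{2}}(v_2 - v_1),
\]
and since $\|v_2 - v_1\| = \sqrt{2}$, one obtains $\|[T',P]w\| \geq (c_2-c_1)/2 - \varepsilon$ provided all earlier approximation parameters were chosen small enough (which is a routine triangle-inequality bookkeeping). Since $\|w\|=1$, this yields $\|[P_FTP_F,P]\| > (c_2-c_1)/2 - \varepsilon$. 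The only real work is the error accounting: propagating the initial $\delta$ through three stages (orthonormalization of approximate eigenvectors, truncation to $\ell_2(F)$, and the commutator calculation on $w$) so that the cumulative error stays below $\varepsilon$; this is the single step requiring care, though it is purely mechanical.
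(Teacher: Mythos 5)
Your argument is correct: every approximation step can be carried out, and the final computation on $w=(v_1+v_2)/\sqrt2$ does give $\|[P_FTP_F,P]w\|\geq (c_2-c_1)/2 - C\delta''$ for a harmless constant $C$, so the bookkeeping closes. The route differs from the paper's in the middle step. You carry \emph{approximate} eigenvectors for $c_1,c_2$ through the whole proof, which forces you to establish near-orthogonality (via $|c_1-c_2|\,|\langle u_1,u_2\rangle|\leq 2\delta$), then Gram--Schmidt, then truncate and re-orthogonalize, propagating errors at each stage. The paper instead truncates first and then diagonalizes the finite-rank compression $P_FTP_F$ exactly: it shows via the estimate $\|P_FTP_F-\lambda P_F\|\geq |c_i-\lambda|-\varepsilon$, applied at the midpoint $\lambda=(a_j+a_k)/2$ of the extreme eigenvalues, that the spectral spread of the compression exceeds $c_2-c_1-2\varepsilon$; it then takes $P$ to be the projection onto the normalized sum of the two \emph{exact} extreme eigenvectors, which are automatically orthogonal, so the commutator computation is exact and no orthogonalization or error propagation is needed. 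Both proofs end with the same rank-one $2\times 2$ picture; yours is more direct conceptually but requires the three-stage error accounting you flag, while the paper's trades that for the small spectral argument on the compression. One cosmetic point: the error in replacing $Tv_i$ by $P_FTP_Fv_i$ is best bounded by noting $(I-P_F)Tv_i=(I-P_F)(Tv_i-c_iv_i)$, so it is controlled by the approximate-eigenvector defect rather than by the tail of $v_i$ (which is zero once $v_i\in\ell_2(F)$); this only simplifies your estimate.
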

\begin{proof} 
First note that for $i=1,2$ there is no $\varepsilon>0$ such that the operator $T-c_iI$ is
 bounded below by $\varepsilon$, i.e., $\|(T-c_iI)(x)\|\geq \varepsilon\|x\|$ for all $x\in \ell_2$. This is because otherwise
 $T-c_iI$ is injective and has closed range, and 
$0\not=\langle (T-c_iI)y, (T-c_iI)y \rangle=\langle (T-c_iI)^2y, y \rangle$ for every $y\not=0$, so
$T-c_iI$ is also surjective and so invertible, contradicting the hypothesis that $c_i\in \sigma(T)$.

Fix $\varepsilon>0$. By the above there are unit vectors  $x_1', x_2'\in \ell_2$ such that 
$\|T(x_i')-c_ix_i'\|<\varepsilon$ for $i=1,2$.
By considering finitely supported (with respect to $\E$) approximations $x_1, x_2$ to $x_1', x_2'$
so that  $T(x_1), T(x_2)$ approximate $T(x_1'), T(x_2')$ 
we can find  a finite $F\subseteq\N$ such that $x_1, x_2\in span(\{e_n: n\in F\})$
and  $\|P_FTP_F(x_i)-c_ix_i\|<\varepsilon$ for $i=1,2$.

Now we will work in  $span(\{e_n: n\in F\})$.
Find an orthonormal basis $f_1, \dots f_n$ for $span(\{e_n: n\in F\})$ with respect to which $P_FTP_F$ is diagonal with (real)
diagonal entries $a_1, \dots, a_n$.
Note that for any $\lambda\in \R$ and $i=1,2$ we have 
$$\alpha(\lambda)=\max_{1\leq j\leq n}|a_j-\lambda|=\|P_FTP_F-\lambda P_F\|\geq\|P_FTP_F(x_i)-\lambda x_i\|=$$
$$=\|c_ix_i+P_FTP_F(x_i)-c_ix_i-\lambda x_i\|>
 |c_i -\lambda|-\varepsilon.\leqno (*)$$
Let $a_j$ and $a_k$ be the smallest and the largest values of $a_1, \dots, a_n$ respectively
and $\lambda=(a_j+a_k)/2$. Then 
$$\alpha(\lambda)=|a_j-\lambda|=|a_k-\lambda|=(a_k-a_j)/2\leqno (**)$$
and so the triangle inequality,  (*) and (**) yield
$$c_2-c_1-2\varepsilon\leq|c_2-\lambda|-\varepsilon+|c_1-\lambda|-\varepsilon<
\alpha(\lambda)+\alpha(\lambda)= a_k-a_j.\leqno(***)$$
Let   $P$ be the orthogonal projection onto the subspace of $\ell_2(n)$
spanned by  $w={1\over\sqrt{2}}f_k+{1\over\sqrt{2}}f_j$.
We have 
$P(f_j)={1\over\sqrt{2}}w={1\over{2}}f_k+{1\over{2}}f_j$, so
$P_FTP_FP(f_j)={a_k\over{2}}f_k+{a_j\over{2}}f_j$, while
$PP_FTP_F(f_j)=P(a_jf_j)=a_jP(f_j)={a_j\over{2}}f_k+{a_j\over{2}}f_j$, and so putting $v=f_j$ by (***)
$$\|(P_FTP_FP-PP_FTP_F)(v)\|=\|{a_k-a_j\over{2}}f_k\|={a_k-a_j\over 2}> {c_2-c_1\over 2}-\varepsilon.$$

So  $P$ is as required.

\end{proof}

\begin{lemma}\label{substitution} Let $\E$ be an orthonormal basis of $\ell_2$.
Suppose that 
\begin{enumerate}
\item $\A\subseteq\B\subseteq \D(\E)$ are countable Boolean subalgebras of projections,
\item  $\B$ is generated by $\A\cup\{P\}$, where $P\in \D(\E)$ is a projection,
\item $\pi|\B$ is a Boolean isomorphism,
\item $\U$ is an ultrafilter in $\A$ which is split by $P$,
 \item $T$ is a self-adjoint element of $\bb$ such that for some reals $c_1<c_2$
 $$c_1, c_2\in \sigma((R+K)T(R+K'))$$
  for every  $R\in \U$ and  $K, K'\in\K$. 
\end{enumerate}
Then there is  a projection $Q$ in $\bb$  and there is a Boolean
monomorphism $h: \pi[\B]\rightarrow \QQ$  such that
 for every $R\in \U$ we have
\begin{enumerate}
\item[(6)] $[T, Q]\not\in \K.$
\item[(7)] $RQ, QR, R(I-Q), (I-Q)R \not\in\K,$
\item[(8)] $(P-Q)(I-R), (I-R)(P-Q)\in \K,$
\item[(9)] $[Q, A]\in \K$ for every $A\in \A$, 
\item[(10)] $h$ is the identity on $\pi[\A]$
and $h(\pi(P))=\pi(Q)$.
\end{enumerate}
\end{lemma}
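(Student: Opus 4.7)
The plan is to construct $Q$ as a perturbation of $P$ supported on finite \emph{twist blocks} $F_n$ chosen inside the supports of a decreasing cofinal sequence in $\U$, where each block carries a rotated projection $Q_n$ produced by Lemma \ref{commutator} forcing $[T,Q] \notin \K$, while outside the blocks $Q$ agrees with $P$. The blocks descend into $\U$ so that modulo compacts $Q$ matches $P$ on every $A \in \A$ outside $\U$, yet retains enough internal freedom inside $\U$ to meet (6)--(9) and to realize the target isomorphism (10).

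Enumerate $\A = \{A_n : n \in \N\}$ and, writing $A_l^* = A_l$ when $A_l \in \U$ and $A_l^* = 1 - A_l$ otherwise, put $R_n = A_0^* \wedge \cdots \wedge A_n^* \in \U$. These are diagonal projections onto infinite decreasing sets $Y_n \subseteq \N$; let $X$ be the support of $P$. Recursively: at stage $n$, let the finite exclusion set $G_n$ collect $\{0,\dots,n-1\}$ together with all previously chosen $F_k$'s and reservations $y_k$, and apply (5) with $R = R_n$ and $K = K' = -P_{G_n \cap Y_n} \in \K$ to get $c_1, c_2 \in \sigma(P_{Y_n \setminus G_n} T P_{Y_n \setminus G_n})$. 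A routine adaptation of Lemma \ref{commutator}, carrying its approximate-eigenvector construction out inside $\ell_2(Y_n \setminus G_n)$, then yields a finite $F_n \subseteq Y_n \setminus G_n$ and a finite-rank projection $Q_n$ supported in $F_n$ with $\|[P_{F_n} T P_{F_n}, Q_n]\| > (c_2-c_1)/2 - 1/n$. Since $P$ splits $\U$ in $\B$, the set $Y_n \setminus X$ is infinite, so I can reserve an index $y_n \in (Y_n \setminus X) \setminus (G_n \cup F_n)$. Finally, define $Q = P_{X \setminus \bigcup_j F_j} + \sum_{j \in \N} Q_j$; this is a projection by orthogonality of its summands.

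Conditions (6)--(9) come from direct calculation. The identity $P_{F_n}[T,Q]P_{F_n} = [P_{F_n}TP_{F_n}, Q_n]$ (which holds since $Q P_{F_n} = Q_n$) produces pairwise orthogonal unit vectors on which $\|[T,Q]\|$ exceeds $(c_2-c_1)/2 - O(1/n)$, giving (6). The inclusion $F_j \subseteq Y_j \subseteq Y_{A_l^*}$ for $j \geq l$ puts the range of $Q_j$ either inside $\ell_2(Y_{A_l})$ (when $A_l \in \U$) or inside $\ell_2(\N \setminus Y_{A_l})$ (when $A_l \notin \U$), so $[Q_j, A_l] = 0$ for $j \geq l$, proving (9) with a compact tail. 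For (8), the expansion $(P-Q)(I-R) = \sum_j (P_{X \cap F_j} - Q_j) P_{\N \setminus Y_R}$ is a block-sum whose nontrivial blocks are supported in the pairwise disjoint sets $F_j \setminus Y_R$; for any $n$ with $R_n \leq R$ modulo $\K$, these all lie in the fixed finite set $Y_n \setminus Y_R$, so only finitely many blocks are nonzero.

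The crucial point is (10). Because $\pi[\B]$ over $\pi[\A]$ is determined by the ideals $\{A \in \A : AP \in \K\}$ and $\{A \in \A : A(I-P) \in \K\}$, the map $h$ extends to a Boolean monomorphism sending $\pi(P)$ to $\pi(Q)$ and fixing $\pi[\A]$ precisely when $AP \in \K \Leftrightarrow AQ \in \K$ and $A(I-P) \in \K \Leftrightarrow A(I-Q) \in \K$ for every $A \in \A$. For $A \notin \U$, condition (8) applied to $1 - A \in \U$ combined with (9) yields $A(P-Q) \in \K$, so both equivalences hold automatically. For $A \in \U$, $AP$ and $A(I-P)$ are noncompact by the splitting; $AQ \notin \K$ is witnessed by unit vectors in the ranges of the $Q_j$'s (once $F_j \subseteq Y_A$ these are fixed by $AQ$), and $A(I-Q) \notin \K$ is witnessed by the reserved basis vectors $e_{y_k}$ (for $k$ large they lie in $Y_A \setminus X$ and miss $\bigcup F_j$, hence are fixed by $A(I-Q)$). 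Condition (7) is the specialization of these observations to $A = R \in \U$. The main obstacle is the interlocking book-keeping needed to keep the blocks $F_n$ disjoint, the reservations $y_k$ protected from all future blocks, and the $F_n$'s descending cofinally into $\U$, all simultaneously; a secondary technical point is the adaptation of Lemma \ref{commutator} so that the finite set $F_n$ lies inside the prescribed cofinite subset $Y_n \setminus G_n$.
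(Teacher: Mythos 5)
Your proposal is correct and follows essentially the same route as the paper's proof: the same block decomposition $Q = P_{X\setminus\bigcup F_j} + \sum_j Q_j$ with finite twist blocks $F_n$ descending through a cofinal sequence of $\U$, the same application of Lemma \ref{commutator} to $P_A T P_A$ on the subspace $\ell_2(Y_n\setminus G_n)$ (justified via hypothesis (5) with compact perturbations), the same commutator identity $P_{F_n}[T,Q]P_{F_n}=[P_{F_n}TP_{F_n},Q_n]$ for (6), and the same Sikorski-type two-equivalence criterion for (10). The only cosmetic differences are your reserved indices $y_k$ witnessing $A(I-Q)\notin\K$ (the paper instead uses vectors inside the blocks, exploiting $Q_n\neq 0, P_{F_n}$) and the constant $(c_2-c_1)/2-1/n$ versus the paper's $(c_2-c_1)/3$; note only that the step "$Y_n\setminus X$ is infinite" needs hypothesis (3) to upgrade nonemptiness from the splitting to infiniteness, exactly as the paper does for $A'$.
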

\begin{proof} Let $\E=(e_k)_{k\in \N}$. For $A\subseteq \N$ by $P_A$ we will mean the projection in $\D(\E)$ such that
$P_A(e_k)=e_k$ if $k\in A$ and $P_A(e_k)=0$ if $k\not\in A$. All projections in $\D(\E)$ are of this form for
some $A\subseteq \N$, in particular $P=P_X$ for some $X\subseteq \N$. 
Let $(A_n)_{n\in \N}$ be such a sequence of subsets of $\N$ that
$A_0=\N$ and
$\U=\{P_{A_n}: n\in \N\}$. 

By induction on $n\in \N$ we will construct $(F_n)_{n\in \N}$, $(Q_n)_{n\in \N}$, 
such that for every $n, n'\in \N$ the following hold:
\begin{enumerate}
\item[(a)] $F_n$ is a finite subset of $\N$, 
\item[(b)] $F_n\cap F_{n'}=\emptyset$,
\item[(c)] $F_n \subseteq \bigcap_{l\leq n}A_l$,
\item[(d)] $Q_n\in \bb$ is  a projection  such that $\langle Q_n(e_k), e_{k'}\rangle\not =0$ implies $k, k'\in F_n$
for all $k, k'\in \N$,
\item[(e)] $\|[P_{F_n}TP_{F_n}, Q_n]\|> (c_2-c_1)/3$.
\end{enumerate}
Once we have such objects we put 
$$B=\N\setminus\bigcup_{n\in \N} F_n,$$ and we define 
$$ Q(e_k) =
  \begin{cases}
    P(e_k) & \text{if $k\in  B$}, \\
    Q_n(e_k) & \text{if $k\in F_n,\ n\in \N$}.
  \end{cases}
  $$
First let us present an inductive construction satisfying (a) - (e). Suppose that we are done up to $n\in\N$.
  Let $A'=\bigcap_{l\leq n}A_l$ and $A=A'\setminus \bigcup_{k<n}F_k\subseteq \N$.  Note that $A$
  is infinite because $A'$ is infinite because it has to be nonempty since it is in an
  ultrafilter of $\A$, but the nonemptyness implies being infinite since $\pi(P_{A'})\not=0$ by (3).
  $P_{A'}\in \U$  and  $P_{A}$ is its compact perturbation by (a)
  so $c_1, c_2\in \sigma(P_ATP_A)$ by hypothesis (5). Apply Lemma \ref{commutator} for the operator
   $P_ATP_A$ and the Hilbert space generated by its orthonormal basis $\{e_k: k\in A\}$ finding
   $F_n, Q_n$ as in (a) - (e). 
  
  Before going to the proof of (6) - (10) let us note that for
  every $R\in \U$ he have 
  \begin{enumerate}
  \item[(f)] $QP_{F_n}=P_{F_n}QP_{F_n}=Q_n$ for all $n\in \N$,
  \item[(g)] $Q_n=P_{F_n}Q$ for all $n\in \N$,
  \item[(h)] $RP_{F_n}=P_{F_n}R=P_{F_n}$ for all but finitely many $n\in \N$, 
  \item[(i)] $RQP_{F_n}=RP_{F_n}Q=P_{F_n}RQ=Q_n=RQ_n=Q_nR$ for all but finitely many $n\in \N$.
  \end{enumerate}
  Item (f) follows from (b), (d) and from the definition of $Q$.
 
 For (g)  note that 
 $P_{F_n}QP_{\N\setminus F_n}=0$ by the definition of $Q$ by (b), (d) 
 and by the fact that $P\in \D(\E)$. So 
 by (f)  we have  $P_{F_n}Q=P_{F_n}QP_{F_n}=Q_n$.
 
For (h) $R=P_{A_{n'}}$ for some $n'\in \N$ and so by (c) for $n\geq n'$ we have 
  (h).  Items (f) and  (g)  and (h) imply (i).
  
  Now let us prove that $Q$ satisfies (6) - (10). We will often be using an elementary
  fact that $UV\in \K$ if and only if $VU\in \K$ for self-adjoint $U, V\in \bb$ which follows
  from the Schauder theorem and the fact that $VU=V^*U^*=(UV)^*$. So, for example, only half of (7) and (8)
  needs to be proved.

  First let us prove (6). For $n\in\N$ let $v_n\in span(\{e_k: k\in F_n\})$ be a unit vector satisfying
  $$\|[P_{F_n}TP_{F_n}, Q_n](v_n)\|> (c_2-c_1)/3.$$
   It exists by (d) and (e).
   By (f), (g) we have
   $$P_{F_n}[T, Q]P_{F_n}=P_{F_n}TQP_{F_n}-P_{F_n}QTP_{F_n}=
   P_{F_n}TP_{F_n}Q_n-Q_nP_{F_n}TP_{F_n}=$$
   $$= [P_{F_n}TP_{F_n}, Q_n].$$
  We conclude that 
  $$\|[T, Q](v_n)\|\geq \|P_{F_n}[T, Q]P_{F_n}(v_n)\|= \|[P_{F_n}TP_{F_n}, Q_n](v_n)\|>(c_2-c_1)/3,$$ and so
  $[T, Q]\not\in \K$  as needed for (6), since the $v_n$s are pairwise orthogonal as they are disjointly supported.
  
  For (7) since $Q_n$s are projections which do not commute with $P_{F_n}TP_{F_n}$ by (d), (e) we conclude
  that they are not $0$ nor $P_{F_n}$ and so $\|Q_n\|=1$ and $\|P_{F_n}-Q_n\|=1$
   So for $n\in \N$ there are $F_n$ supported unit vectors $w_n$ and $u_n$ witnessing these facts respectively.
   And so for all but finitely many $n\in\N$ we have
   $$\|RQ(w_n)\|=\|RQP_{F_n}(w_n)\|=\|Q_n(w_n)\|=1$$ by (i) and 
   $$\|R(I-Q)(u_n)\|=\|R(I-Q)P_{F_n}(u_n)\|=\|R(P_{F_n}-Q_n)(u_n)\|=\|(P_{F_n}-Q_n)(u_n)\|=1$$
   by (h), (f)  and (i) as needed for (7), since the $w_n$s  and $u_n$s
   are pairwise orthogonal respectively as they are disjointly supported.

 To prove (8) note that if $R\in \U$ there is $k\in \N$ such that $I-R=P_{\N-A_k}$ and 
 only finitely many $F_n$s intersect $\N\setminus A_k$ by (c). It follows that
 $Q(I-R)$ is a finite dimensional perturbation of $P(I-R)$ and so
 $(P-Q)(I-R)\in\K$
  
For (9) let $A\in \A$. Since $\U$ is an ultrafilter of $\A$, $A=I-R$ for $R\in\U$
or $A=R\in \U$.
In the first case 
$$[Q, A]=[Q-P+P, I-R)])=[Q-P, I-R]+[P, I-R]\in \K,$$
since $[Q-P, I-R]\in \K$ by (8) and $[P, I-R]=0$ as $P\in \B\subseteq \D(\E)$.
In the second case 
$$[Q, A]=[Q, I-(I-R)]=[Q, I]+[Q, I-R]\in \K$$
since $[Q, I]=0$ and $[Q, I-R]\in \K$ by the first case.
 This completes the proof of (9).

Note that by (9) the set $\pi[\A]\cup\{\pi(Q)\}$ generates a Boolean algebra $\CC$ of projections of $\QQ$.
To construct $h: \pi[\B]\rightarrow \CC\subseteq\QQ$ as in (10)
we will use Sikorski's extension criterion (Theorem 5.5 of \cite{koppelberg})
which implies that the identity on $\pi[\A]$ can be extended to  an isomorphism $h$ defined on one element extension
$\pi[\B]$ and satisfy $h(\pi(P))=\pi(Q)$ if and only if the following two conditions hold for
all $A\in \A$:
\begin{enumerate}
\item[(i)] $\pi(A)\pi(P)=0 \Leftrightarrow \pi(A)\pi(Q)=0$.
\item[(ii)] $\pi(A)(1-\pi(P))=0 \Leftrightarrow \pi(A)(1-\pi(Q))=0$.
\end{enumerate}
First note that $\pi(A)\pi(P)=0$ or $\pi(A)(1-\pi(P))=0$ for $A\in \A$ only if
$A=I-R$ for $R\in \U$, because otherwise $A\in\U$ as $\U$ is an ultrafilter of $\A$
but this would contradict the hypothesis (4) that $P$ splits the ultrafilter $\U$ as $\pi$ is an isomorphism
on $\B$ by (3), so $\pi(P)$ splits $\pi[\U]$.
Also $\pi(A)\pi(Q)=0$ or $\pi(A)(1-\pi(Q))=0$ for $A\in \A$ only if
$A=I-R$ for $R\in \U$ by (7) as $\U$ is an ultrafilter of $\A$. So in the proof of (i) and (ii) we may assume that 
$A=I-R$ for $R\in \U$. But in this case by (8) we have
$\pi(A)\pi(P)=\pi(A)\pi(Q)$ and so $\pi(A)-\pi(A)\pi(P)=\pi(A)-\pi(A)\pi(Q)$
and so (i) and (ii) are proved, as needed for (10).
\end{proof}

\begin{lemma}\label{main-lemma} Suppose that 
\begin{enumerate}
\item $\A\subseteq \B\subseteq \QQ$ are countable Boolean  algebras of projections,
\item $\B$ is generated over $\A$ by  a projection $R\in \B\setminus\A$, 
\item $T\in \bb$ is  self-adjoint and $\pi(T)$ commutes with all elements of $\A$ but does not
belong to  $C^*(\A)$,
\item $R$  splits ultrafilter $\U$ of $\A$ and $T$ C*-splits $\U$ as well.
\end{enumerate}
Then there is a projection $R'\in \QQ$ and a Boolean monomorphism $g: \B\rightarrow \QQ$ such that
\begin{enumerate}
\item[(5)] $R'$ commutes with $\A$
\item[(6)] $g$ restricted to $\A$ is the identity, $g(R)=R'$
\item[(7)] $R'$ does not commute with $\pi(T)$.
\end{enumerate}
\end{lemma}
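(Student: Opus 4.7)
The plan is to lift the data $\A \subseteq \B$ and the projection $R$ from $\QQ$ to a diagonal subalgebra of some $\D(\E)$ using Lemma \ref{calkin-lift}, verify that the hypotheses of Lemma \ref{substitution} hold in this diagonal setting (with the spectral input coming from Lemma \ref{c-split}), and then read off $R'$ and $g$ from the conclusions of Lemma \ref{substitution} after projecting back down through $\pi$.

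More concretely, I would first apply Lemma \ref{calkin-lift} to the countable Boolean algebra $\B \subseteq \QQ$ to obtain an orthonormal basis $\E$ of $\ell_2$ and a Boolean subalgebra $\B' \subseteq \D(\E)$ such that $\pi|\B': \B' \to \B$ is a Boolean isomorphism. Let $\A' = (\pi|\B')^{-1}[\A]$, let $P \in \B'$ be the unique projection with $\pi(P) = R$, and let $\U' = (\pi|\A')^{-1}[\U]$, an ultrafilter of $\A'$. Hypotheses (1)--(3) of Lemma \ref{substitution} are then immediate; hypothesis (4), that $P$ splits $\U'$, transfers via the Boolean isomorphism $\pi|\B'$ from the fact that $R$ splits $\U$ in $\B$.

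The key step is verifying hypothesis (5) of Lemma \ref{substitution}, namely the spectral condition on $T$. For this I would invoke Lemma \ref{c-split}: since $T$ is self-adjoint, $\pi(T)$ commutes with $\A$, and $T$ C*-splits $\U$, there exist reals $c_1 < c_2$ with $c_1, c_2 \in \sigma(A\pi(T)A)$ for every $A \in \U$. Given $R_0 \in \U'$ and $K, K' \in \K$, the image $\pi((R_0+K)T(R_0+K'))$ equals $\pi(R_0)\pi(T)\pi(R_0) = A\pi(T)A$ where $A = \pi(R_0) \in \U$. Since the spectrum does not grow under quotient maps,
\[
c_1,c_2 \in \sigma(A\pi(T)A) \subseteq \sigma((R_0+K)T(R_0+K')),
\]
as required.

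Having checked all hypotheses, Lemma \ref{substitution} yields a projection $Q \in \bb$ and a Boolean monomorphism $h: \pi[\B'] \to \QQ$ satisfying (6)--(10). Setting $R' = \pi(Q)$ and $g = h$ (under the identification of $\pi[\B']$ with $\B$), conclusion (5) of the main lemma follows from item (9) of Lemma \ref{substitution} (the commutators $[Q,A]$ are compact for $A \in \A'$, so $\pi(Q)$ commutes with $\A$); conclusion (6) follows from item (10) (which says $h$ is the identity on $\pi[\A'] = \A$ and sends $\pi(P) = R$ to $\pi(Q) = R'$); and conclusion (7) follows from item (6) of Lemma \ref{substitution}, since $[T,Q] \notin \K$ forces $[\pi(T), \pi(Q)] \neq 0$. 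The substantive work has already been done in the combinatorial construction inside Lemma \ref{substitution}; the main obstacle here is simply the bookkeeping to transfer the spectral hypothesis between $\QQ$ and $\bb$, which is handled by Lemma \ref{c-split} together with the general fact $\sigma(\pi(X)) \subseteq \sigma(X)$.
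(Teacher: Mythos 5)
Your proposal is correct and follows the paper's own proof essentially verbatim: lift $\B$ to a diagonal algebra via Lemma \ref{calkin-lift}, transfer the spectral hypothesis using Lemma \ref{c-split} together with the fact that spectra do not grow in quotients, and apply Lemma \ref{substitution}. Your explicit matching of conclusions (5), (6), (7) to items (9), (10), (6) of Lemma \ref{substitution} is accurate and slightly more detailed than the paper's one-line conclusion.
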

\begin{proof}
First use Lemma \ref{calkin-lift} to find an orthonormal basis $\E$ and
Boolean algebras $\B', \A'$ of projections included in $\D(\E)$ such that
$\pi|\B'$ is a Boolean isomorphism onto $\B$, with $\pi[\A']=\A$ and $\pi[\U']=\U$ and 
$\pi(P)=R$ for some projection $P\in \D(\E)$.  So items (1) - (4) of lemma \ref{substitution}
are satisfied  for $\A', \B', P, \U'$.  Since $T$ C*-splits $\U$ by Lemma \ref{c-split} there are reals
$c_1<c_2$ such that $c_1, c_2\in \sigma(A\pi(T)A)$ for every $A\in \U$.
Since the spectrum cannot grow in quotients (1.4.2, 2.5.7 of \cite{ilijas-book})
we have $c_1, c_2\in \sigma((A+K)T(A+K')$ for any $A\in \U'$ and
$K, K'\in \K$. So item (5) of the hypothesis of Lemma \ref{substitution}
is satisfied as well. Hence we can apply Lemma \ref{substitution} obtaining $h:\B\rightarrow \QQ$ and $Q\in \bb$.
Now we note that $g=h$ and $R'=\pi(Q)$ are
the objects required in (5) - (7) of Lemma \ref{main-lemma}.
\end{proof}

\begin{proposition}[{\sf CH}]\label{ch-proposition} Suppose that $K$ is totally disconnected, compact, 
Hausdorff,  does not admit a $G_\delta$-point
and is of weight $\omega_1=\mathfrak c$.  
Then there is a collection of more than $\cc$ masas in $\QQ$ which are
$*$-isomorphic to $C(K)$, do  not have  commutative lifts and are pairwise unitarily non-equivalent.
\end{proposition}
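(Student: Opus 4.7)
Under {\sf CH}, enumerate $Clop(K)=\{V_\alpha:\alpha<\omega_1\}$ (possible since $K$ has weight $\omega_1$) and enumerate all self-adjoint elements of $\QQ$ as $(S_\alpha)_{\alpha<\omega_1}$ (possible since $|\QQ|=\cc$). The plan is to build $\A=\bigcup_{\alpha<\omega_1}\A_\alpha\subseteq Proj(\QQ)$ by transfinite recursion, together with a parallel chain of countable Boolean subalgebras $\B_\alpha\subseteq Clop(K)$ that eventually contain every $V_\beta$, and Boolean isomorphisms $h_\alpha:\B_\alpha\to\A_\alpha$. At successor $\alpha=\beta+1$, first apply Lemma \ref{injective-Q} to extend $h_\beta$ to a Boolean isomorphism $h':\B'\to\A'$, where $\B'\subseteq Clop(K)$ is the algebra generated by $\B_\beta\cup\{V_\beta\}$. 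If $S_\beta\in C^*(\A')$ or if $S_\beta$ fails to commute with some element of $\A'$, set $\A_\alpha=\A'$, $\B_\alpha=\B'$, $h_\alpha=h'$ and move on. Otherwise pick a self-adjoint lift $T\in\bb$ of $S_\beta$; Lemma \ref{split-point} yields an ultrafilter $\U\subseteq\A'$ that is C*-split by $T$, and Lemma \ref{gdelta-split} (using that $K$ has no $G_\delta$ points) supplies a clopen $W\in Clop(K)\setminus\B'$ splitting $h'^{-1}[\U]$. Extend $h'$ once more via Lemma \ref{injective-Q} to a tentative projection $R\in\QQ$ matching $W$, and invoke Lemma \ref{main-lemma} with the data $\A'$, the Boolean algebra generated by $\A'\cup\{R\}$, the lift $T$, and the ultrafilter $\U$. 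It returns a projection $R'\in\QQ$ commuting with $\A'$, generating the same Boolean extension of $\A'$ as $R$ did, and not commuting with $\pi(T)=S_\beta$. Take $\A_\alpha$ to be the Boolean algebra generated by $\A'\cup\{R'\}$, redirect the isomorphism so that $W\mapsto R'$, and at limits take unions.

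At the end, $h=\bigcup h_\alpha$ is a Boolean isomorphism $Clop(K)\to\A$, so Lemma \ref{cstar-ba-iso} yields $C^*(\A)\cong C(K)$. To see $C^*(\A)$ is a masa of $\QQ$, apply Lemma \ref{sa-masa}: every self-adjoint $S\in\QQ\setminus C^*(\A)$ equals some $S_\beta$, and at stage $\beta+1$ either $S_\beta\in C^*(\A_{\beta+1})\subseteq C^*(\A)$ (contradicting the hypothesis on $S$), or the projection $R'$ installed at that stage lies in $\A\subseteq C^*(\A)$ and fails to commute with $S_\beta$.

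For the cardinality assertion, branch the recursion along a binary tree of height $\omega_1$: at each use of Lemma \ref{main-lemma}, its proof via Lemma \ref{substitution} affords substantial freedom in the choice of the finite sets $F_n$ and the two-dimensional subprojections $Q_n$, so one may arrange two incompatible local choices that propagate to distinct masas $\A^b$ indexed by $b\in 2^{\omega_1}$. Any masa of $\QQ$ with a commutative lift is, by Lemma \ref{masa-masaQQ}, the image under $\pi$ of a masa of $\bb$, and the set of masas of $\bb$ has cardinality $\cc$ (recalled in the Introduction), so at most $\cc$ of the $\A^b$ admit commutative lifts, leaving $2^{\omega_1}$ without any. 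Since each unitary equivalence class in $\QQ$ has cardinality at most $|\QQ|=\cc$, these $2^{\omega_1}$ masas distribute into at least $2^{\omega_1}\geq\cc^+>\cc$ unitary equivalence classes, as required.

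The hard part will be the branching step: one must extract the local freedom within the proof of Lemma \ref{substitution} explicitly and argue that distinct binary branches yield genuinely distinct subalgebras of $\QQ$, rather than merely locally different but globally coincident collections of projections. The successor-step invocation of Lemma \ref{main-lemma} is the technical heart, while the limit bookkeeping---ensuring that $h_\alpha$ remains a Boolean isomorphism and that every $V_\alpha$ is eventually absorbed---is routine but must be carefully tracked.
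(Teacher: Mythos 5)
Your main construction of a single masa $*$-isomorphic to $C(K)$ is essentially the paper's own argument: the same $\omega_1$-recursion under {\sf CH}, the same use of Lemma \ref{injective-Q} to extend the Boolean isomorphism, of Lemmas \ref{split-point} and \ref{gdelta-split} to find a clopen set splitting the C*-split ultrafilter, and of Lemma \ref{main-lemma} to replace the tentative projection $R$ by an $R'$ in the same Boolean position that fails to commute with $S_\beta$. The verification that the limit algebra is a masa via Lemma \ref{sa-masa} is also the paper's.

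The genuine gap is in the final paragraph: everything beyond the single masa --- ``more than $\cc$'', ``pairwise unitarily non-equivalent'', and ``no commutative lift'' --- rests on the claim that branching the recursion along a binary tree of height $\omega_1$ produces $2^{\omega_1}$ \emph{distinct} masas, and you have not established this. The ``local freedom'' in the choice of $F_n$ and $Q_n$ inside Lemma \ref{substitution} changes $Q$ only up to data that could perfectly well generate the same C*-subalgebra of $\QQ$ at the end of the recursion (indeed $\pi(Q)$ is constrained to occupy a prescribed Boolean position over $\A$, so two branches differ only in which projection realizes that position, and nothing prevents the two limit algebras from coinciding). You flag this yourself as ``the hard part'', which is exactly the point: it is the part that needs a proof, and as stated it is not a proof. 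The paper avoids branching entirely by \emph{diagonalization}: it reserves a third uncountable set $Z$ of stages and an enumeration $\{\CC_\alpha:\alpha\in Z\}$ of target masas; at stage $\alpha\in Z$, if the algebra built so far sits inside $\CC_\alpha$, one picks a self-adjoint $R_\alpha\in\CC_\alpha\setminus C^*(\B_\alpha)$ (possible since masas of $\QQ$ are nonseparable, by Proposition \ref{gdelta}) and runs the same Lemma \ref{main-lemma} step against $R_\alpha$, forcing $B_\alpha\notin\CC_\alpha$ and hence $\CC\neq\CC_\alpha$. Taking $\{\CC_\alpha\}$ to be the $\cc$ images of masas of $\bb$ kills commutative lifts (via Lemma \ref{masa-masaQQ}); then an outer induction of length $\omega_2$, each run diagonalizing additionally against all unitary conjugates $U\mathcal D_\eta U^*$ of the previously built masas (still only $\cc$ many targets per run, so the enumeration fits in $\omega_1$ stages), yields $\omega_2>\cc$ pairwise unitarily non-equivalent examples. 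Your counting at the end (at most $\cc$ liftable masas, unitary classes of size at most $\cc$) would be fine \emph{if} you had $2^{\omega_1}$ distinct masas, but that input is precisely what is missing; I would recommend replacing the branching idea with the diagonalization scheme.
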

\begin{proof}
In fact the nonexistence of a commutative lift follows for all but three $K$s (listed in items (1) - (3)
in the Introduction) from the homeomorphic type of  $K$ by Lemma \ref{masa-masaQQ}.
Assume {\sf CH}.  
Let
\begin{itemize}

\item $X, Y, Z$ be a partition of $\omega_1$ into three uncountable sets,
\item $\{\CC_\alpha: \alpha\in Z\}$ be an enumeration of  some fixed family of
masas of $\QQ$,
\item $\A$ be the Boolean algebra of all clopen subsets of $K$ and
\item  
$\{A_\alpha: \alpha\in X\}$ be an enumeration of all elements of $\A$,
\item $\{T_\alpha:\alpha\in Y\}$ be an enumeration of all self-adjoint elements of
$\bb$. 

\end{itemize}
The possibility of the enumerations as in  the last two items follows from the hypothesis  
that $K$ is totally disconnected and of weight $\omega_1$ and    from {\sf CH}.
First we will show how to construct a masa $\CC$ of $\QQ$ which is $*$-isomorphic to
$C(K)$ and $\CC\not=\CC_\alpha$ for all $\alpha\in Z$.

By induction on $\alpha<\omega_1$ we construct projections $B_\alpha\in \QQ$, elements
$A_\alpha'\in \A$, Boolean isomorphisms $h_\alpha$  such that 
\begin{enumerate}
\item The projections $\{B_\beta: \beta<\alpha\}$ commute and so they generate a Boolean algebra 
denoted $\B_\alpha\subseteq \QQ$.
\item $h_\alpha: \A_\alpha'\rightarrow \B_\alpha$ satisfies $h_\alpha(A_\beta')=B_\beta$ for all $\beta<\alpha$, where
$\A_\alpha'$ is the Boolean subalgebra of $\A$ generated by $\{A_\beta': \beta<\alpha\}$.
\item If $\alpha \in X$, then 
$A'_\alpha=A_\beta$,  where $\beta=\min\{\gamma<\alpha: A_\gamma\not\in \A_\alpha'\}$ if this set is nonempty.
\item If $\alpha\in Y$ and  $\pi(T_\alpha)\not\in C^*(\B_\alpha)$ but $\pi(T_\alpha)$  commutes with all elements of
$\B_\alpha$,
then   $B_\alpha$ does not commute with $\pi(T_\alpha)$, 
\item If $\alpha\in Z$ and  $h_\alpha[\A_\alpha']\subseteq \CC_\alpha$, then there is $R\in \CC_\alpha$
 such that   $B_\alpha$ does not commute with $R$, 
\end{enumerate}
First note that  (1) and (2) imply that  $\A_\beta'\subseteq \A_\alpha'$,  
$\B_\beta\subseteq \B_\alpha$, $h_\beta\subseteq h_\alpha$
for all $\beta<\alpha<\omega_1$. Put $\A'=\bigcup_{\alpha<\omega_1}\A_\alpha'$,
$\B=\bigcup_{\alpha<\omega_1}\B_\alpha$, $h=\bigcup_{\alpha<\omega_1}h_\alpha$.
Condition (3) implies that $\A=\A'$ and $h:\A\rightarrow\B$ is a Boolean isomorphism
and so $\CC=C^*(\B)$ is $*$-isomorphic to $C^*(K_\A)$, where
$K_\A$ is the Stone space of $\A$ which is $*$-isomorphic to $C(K)$ since $\A$ is the Boolean
algebra of clopen subsets of $K$.
Condition (4) and Lemma \ref{sa-masa} imply that 
 $\CC=C^*(\B)$ of $\QQ$ generated by $\B$ is a masa of $\QQ$. Condition (5) 
 implies that $\CC\not=\CC_\alpha$ for any $\alpha\in Z$.
 So we are left with showing how to do the inductive step $\alpha$.

Assume that we have (1) - (5) up to $\alpha<\omega_1$.
Put $h_\alpha=\bigcup_{\beta<\alpha}h_\beta$.

First consider the case of $\alpha\in X$. Let $\beta$ be as in (3) and assume the set 
in (3) is nonempty. Then we put $A_\alpha'=A_\beta$ as in (3).
We need to find a Boolean monomorphism $h_{\alpha+1}: \A_{\alpha+1}'\rightarrow \QQ$
which extends $h_\alpha$ and satisfies (2). This can be done by Lemma \ref{injective-Q}.
Finally we put $B_\alpha=h_{\alpha+1}(A_\alpha')$. So we obtain (1) and (2). If $\alpha\in X$ and the set
from (3) is empty we choose $A_\alpha'$ to be any element of 
$\A\setminus \A_\alpha'$ and proceed as in the nonempty subcase.
Conditions (4)  and (5) are void in this case.

If $\alpha\in Y$ and $\pi(T_\alpha)$ does not commute with the entire $C^*(\B_\alpha)$ or
$\pi(T_\alpha)\in \B_\alpha$, then we proceed as in the previous case.
Otherwise we have $\pi(T_\alpha)$ in $\QQ\setminus\B_\alpha$
 commuting with the entire $C^*(\B_\alpha)$. This means that  $C^*(\B_\alpha\cup\{\pi(T_\alpha)\})$ is commutative
 as $T$ is self-adjoint
 and $C^*(\B_\alpha)$ is a proper C*-subalgebra of $C^*(\B_\alpha\cup\{\pi(T_\alpha)\})$. So by Lemma \ref{split-point} there is
 an ultrafilter $\U$ of $\B_\alpha$ which is C*-split by $T_\alpha$. 
 As $\A$ does not admit any $G_\delta$-point,  by Lemma \ref{gdelta-split} there is $\gamma<\omega_1$ such that
 $A_\gamma\in \A\setminus\A_\alpha'$ and such that $A_\gamma$
 splits $h_{\alpha}^{-1}[\U]$. Define $A_\alpha'=A_\gamma$.
 Use Lemma \ref{injective-Q} to find a Boolean monomorphism $h_{\alpha+1}': \A_{\alpha+1}'\rightarrow \QQ$
 which extends $h_\alpha$. So $h_{\alpha+1}'(A_\alpha')$ splits $\U$. 
 Now 
  we are in the position to use
 Lemma \ref{main-lemma} for  the algebras $\B_{\alpha}\subseteq \B_{\alpha}'$
 where $\B_\alpha'$ is generated by $\B_\alpha$ and $h_{\alpha+1}'(A_\alpha')$
 for $R=h_{\alpha+1}'(A_\alpha')$ and $T=T_\alpha$.
 
So we obtain $R'\in \QQ$ which commutes with $\B_\alpha$ and a Boolean monomorphism
$g: \B_{\alpha+1}'\rightarrow\QQ$ which extends the identity on $\B_\alpha$
and satisfies $g(h_{\alpha+1}'(A_\alpha'))=R'$ while 
$R'$ does not commute with $\pi(T_\alpha)$.

So put $B_{\alpha}=R'$  and $h_{\alpha+1}=g\circ h_{\alpha+1}'$.
The properties of $g$ imply that $h_{\alpha+1}\supseteq h_\alpha$ and 
$h_{\alpha+1}(A_{\alpha}')=R'=B_{\alpha}$ as required in (2).
Condition (1) is satisfied by Lemma \ref{main-lemma} (5).
Conditions (3) and (5)  are void in this case.
Condition (4) follows from Lemma \ref{main-lemma} (7).

Finally consider the case of $\alpha\in Z$ and assume
that   $h_\alpha[\A_\alpha']=\B_\alpha\subseteq \CC_\alpha$. Since each masa of $\QQ$ is nonseparable
(e.g. by Proposition \ref{gdelta}) and $\A_\alpha'$ is countable, we may find a 
self-adjoint $R_\alpha\in \CC_\alpha\setminus C^*(\B_\alpha)$.
As $\B_\alpha\cup\{R\}\subseteq \CC_\alpha$, the element $R$
commutes with entire $C^*(\B_\alpha)$. So we may proceed as in the case of $\alpha\in Y$ with 
a self-adjoint $T'$ satisfying $\pi(T')=R$ in
the place of $T_\alpha$. 

This completes the inductive step of (1) - (5)  and by the argument from the paragraph after the
conditions (1) - (5) this proves the existence
of  a masa $\CC$ of $\QQ$ which is $*$-isomorphic to
$C(K)$ and $\CC\not=\CC_\alpha$ for all $\alpha\in Z$.
To make sure that there is such a masa which does not have a lift to $\bb$
while carrying out the construction as in (1) - (5) consider 
$$\{\CC_\alpha: \alpha\in Z\}=\{\pi[\mathcal D]: \mathcal D \ \hbox{is a masa of}\  \bb\}.$$
This is possible since the right hand side above  has cardinality $\cc$ 
as there are exactly $\cc$  masas of $\bb$ (Proposition 12.3.1 of \cite{ilijas-book}).
Now Lemma \ref{masa-masaQQ} implies that $\CC$ constructed as above
for this $\{\CC_\alpha: \alpha\in Z\}$ does not have a commutative lift.
To obtain more than $\cc=\omega_1$ such pairwise unitarily 
non-equivalent masas $\mathcal D_\xi$ of $\QQ$ for $\xi<\omega_2$
construct them by transfinite induction on $\xi<\omega_2$ performing the above construction
satisfying (1) - (5) for 
$$\{\CC_\alpha: \alpha\in Z\}=\{\pi[\mathcal D]: \mathcal D \ \hbox{is a masa of}\  \bb\}
\cup\{U\mathcal D_\eta U^*: U \in \mathbb U(\QQ), \ \eta<\xi\},$$
where $\mathbb U(\QQ)$ stands for the set of all unitary elements of $\QQ$ which has
cardinality $\cc$ as $\bb$ has cardinality $\cc$.
\end{proof}

\section{Masas of $\QQ$  with no additional set-theoretic hypothesis}

In this section we do not use any additional set-theoretic hypothesis. First, in Proposition \ref{gdelta}
 we prove that the Gelfand space of a masa of $\QQ$ cannot admit a $G_\delta$ point. This is related to
 item (B) of Theorem \ref{main}. The remainder of this section is devoted to proving item (A) of Theorem \ref{main}.
 
To do so we start preparatory work related to viewing $\bb$ as $\bb\otimes M_2$ 
(Definition \ref{def-Azero}, Lemma \ref{limit-compact})
as our masas will be included in $\pi[\D(\E)\otimes M_2]$, where $\D(\E)$ is the algebra of all diagonal
operators with respect to a fixed orthonormal basis $\E$. They will also include $\pi[\D(\E)\otimes \C I_2]$.
In fact, it will be more convenient to work with preimages of masas under $\pi$, than with
masas of $\QQ$, we call such preimages almost masas in Definition \ref{def-almost}. Lemma \ref{crucial} is crucial for our
almost masas and masas and its proof is based on the Johnson and Parrott theorem. 

In Definition \ref{def-Ds} and Lemmas \ref{71/72} and \ref{dx} we deal with choosing
orthonormal bases in the algebra $M_2$  of two by two matrices. Finally in Definition 
\ref{def-M} we define the C*-algebras some of which will witness item (A) of Theorem \ref{main}. 
They depend on  two parameters: the family $\X$ of subsets of $\N$ and the family of choices of reals 
$\mathfrak A=(\alpha_X: X\in \X)$, where $\alpha_X\in (71/72, 1]$. The reals $\alpha_X$
represent orthonormal  bases in $M_2$.  We define {\sl wide} families $\X$ in Definition \ref{def-wide}
and {\sl coherent} choices $\mathfrak A$ in Definition \ref{def-coherent}.
The algebras $\M_{\X, \mathfrak A}$ are almost masas (in particular $\pi[\M_{\X, \mathfrak A}]$ is a masa in $\QQ$)
for $\X$ wide and $\mathfrak A$ coherent as proved in Lemma \ref{almost-masa}.
Two following lemmas (\ref{splitting}, \ref{gen-projections}) prove that our almost masas
$\M_{\X, \mathfrak A}$ are generated by projections, in particular
the masas $\pi[\M_{\X, \mathfrak A}]$ are generated by projections as well.

Definition \ref{def-BX} and Proposition \ref{few-projections}  concern
particular type of the masa $\pi[\M_{\X, \mathfrak A}]$, where
the wide family $\X$ is a maximal almost disjoint family. We are able to 
explicitly describe the Gelfand space of the masa in Proposition \ref{few-projections} (3). 
The rest of the section is devoted to showing that among our masas $\pi[\M_{\X, \mathfrak A}]$ there are
$2^\cc$ pairwise non-$*$-isomorphic ones. 

\begin{proposition}\label{gdelta} Suppose that $\A$ is a masa in the Calkin algebra and $\A$ is isomorphic to $C(K)$, where $K$
is compact and Hausdorff. Then $K$ does not admit a $G_\delta$ point.
\end{proposition}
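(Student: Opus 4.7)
Assume for contradiction $x\in K$ is a $G_\delta$ point; the goal is to exhibit a self-adjoint $S\in\QQ\setminus\A$ commuting with every element of $\A$, contradicting Lemma \ref{sa-masa}. If $x$ were isolated, then $Q=1_{\{x\}}\in\A$ would lift to an infinite-rank projection $R\in\bb$ (since $\pi(R)=Q\neq 0$), so the corner $Q\QQ Q\cong R\bb R/(R\K R)$ would itself be a Calkin algebra and hence nonabelian, while the compression $Q\A Q=\{a(x)Q:a\in\A\}=\C Q$ is one-dimensional, contradicting that compressions of masas by projections of the masa are masas of the corner. So $x$ is nonisolated. Using normality, pick a decreasing sequence $(U_n)$ of open neighborhoods of $x$ with $\overline{U_{n+1}}\subseteq U_n$ and $\bigcap_n U_n=\{x\}$, and Urysohn functions $g_n\in C(K)=\A$ with $0\leq g_n\leq 1$, $g_n=1$ on $K\setminus U_n$, and $g_n=0$ on $\overline{U_{n+1}}$; then $\mathrm{supp}(g_n)\subseteq K\setminus U_{n+1}$, and since $g_m=1$ on $K\setminus U_m\supseteq K\setminus U_{n+1}$ for $m>n$, one gets the key multiplicative relation $g_ng_m=g_n$.

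\textbf{Constructing $P$.} Lift each $g_n$ to a positive contraction $G_n\in\bb$; then $G_k-G_kG_n\in\K$ whenever $n>k$, and since $g_n(x)=0$ we have $0\in\sigma_{\mathrm{ess}}(G_n)$, so the spectral projection $1_{[0,\varepsilon]}(G_n)$ has infinite-dimensional range for every $\varepsilon>0$. Inductively choose orthonormal $v_n$ with $\|G_kv_n\|<1/n$ for each $k\leq n$: at step $n$ pick a unit $v_n\in\mathrm{ran}\,1_{[0,1/(2n)]}(G_n)$, orthogonal to $v_1,\dots,v_{n-1}$, and such that $\|(G_k-G_kG_n)v_n\|<1/(2n)$ for every $k<n$---possible because each $G_k-G_kG_n\in\K$ has norm less than $1/(2n)$ on the intersection of the infinite-dimensional spectral subspace with a suitable finite-codimensional subspace. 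Then $\|G_kv_n\|\leq\|G_kG_nv_n\|+\|(G_k-G_kG_n)v_n\|<1/n$ for $k<n$, and $\|G_nv_n\|\leq 1/(2n)$ directly. Let $P$ be the orthogonal projection onto $\overline{\mathrm{span}}\{v_n:n\in\N\}$; it has infinite rank, so $\pi(P)\neq 0$, and
\[
\|G_kP\|_{\mathrm{HS}}^2=\sum_n\|G_kv_n\|^2<\infty,
\]
so $G_kP\in\K$ and consequently $g_k\pi(P)=0=\pi(P)g_k$ in $\QQ$ for every $k$.

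\textbf{Conclusion.} For any $a\in\A$ the function $(a-a(x))(1-g_n)$ is supported in $U_n$, where $|a-a(x)|$ tends to $0$ by continuity of $a$ at $x$; hence $(a-a(x))g_n\to a-a(x)$ in norm and therefore $(a-a(x))\pi(P)=\lim_n(a-a(x))g_n\pi(P)=0$, which yields $a\pi(P)=a(x)\pi(P)=\pi(P)a$. Thus $\pi(P)$ is a self-adjoint element of $\QQ$ commuting with every element of $\A$. Were $\pi(P)=1_V\in\A$ for some nonempty clopen $V\subseteq K$, then $g_n\cdot 1_V=0$ would force $V\subseteq\{g_n=0\}\subseteq U_n$ for all $n$, whence $V\subseteq\bigcap_n U_n=\{x\}$ and $\{x\}$ would be clopen, contradicting that $x$ is nonisolated. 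So $\pi(P)\in\QQ\setminus\A$, and Lemma \ref{sa-masa} yields the contradiction. The principal technical obstacle is the simultaneous smallness estimate $\|G_kv_n\|<1/n$ for all $k\leq n$ in the inductive construction of the $v_n$, which rests essentially on the compact near-commutation $G_kG_n\equiv G_k\pmod\K$ supplied by the multiplicative structure $g_kg_n=g_n$ of the chosen Urysohn functions.
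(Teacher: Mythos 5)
Your proof is correct, and its technical core is genuinely different from the paper's. Both arguments share the same frame: reduce to Lemma \ref{sa-masa} by producing a self-adjoint element of $\QQ\setminus\A$ commuting with all of $\A$, starting from a decreasing neighborhood basis $(U_n)$ at the $G_\delta$ point $x$ and continuous functions vanishing near $x$. Where you diverge is in how that element is produced. The paper encodes the whole sequence into a single function $g=\sum_n f_n/2^{n+1}$ vanishing exactly at $x$, lifts it, diagonalizes the lift by the Weyl--von Neumann theorem, passes to $\pi[\D(\E)]\cong C(\N^*)$, and uses the fact that nonempty $G_\delta$ subsets of $\N^*$ have nonempty interior to obtain continuum many pairwise disjoint clopen sets annihilated by $\iota(g)$, of which at most one can lie in $\A$. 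You instead build the commuting projection $P$ by hand: a gliding-hump choice of orthonormal vectors $v_n$ that are simultaneously almost-null for $G_1,\dots,G_n$, exploiting the relation $g_kg_n=g_k$ (for $n>k$) to control all earlier $G_k$ at once; the estimate $\sum_n\|G_kv_n\|^2<\infty$ then makes each $G_kP$ Hilbert--Schmidt, hence compact. Your route is more elementary (no Weyl--von Neumann, no structure theory of $\N^*$), at the price of a separate treatment of the isolated-point case, which the paper's argument absorbs automatically because it produces many disjoint commuting projections rather than just one; your corner argument for that case (or, even more simply, any proper nonzero subprojection of a lift of $1_{\{x\}}$ already commutes with $\A$) is fine. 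All the individual steps check out: positivity and contractivity of the lifts, $0\in\sigma_{\mathrm{ess}}(G_n)$ from $g_n(x)=0$, the finite-codimension intersection in the inductive step, and the final norm-limit computation giving $a\pi(P)=a(x)\pi(P)=\pi(P)a$.
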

\begin{proof} 
Let $\iota : C(K)\rightarrow \QQ$ be a $*$-embedding onto its range $\A$.
Suppose that there are open $U_n\subseteq K$ such that $U_0=K$,  $\overline{U_{n+1}}\subseteq U_n$ for each $n\in \N$ and
$\bigcap U_n=\{x\}$ for some $x\in K$.  The compactness of $K$ implies that 
$\{U_n\}_{n\in \N}$ is a neighborhood basis at $x$. 

We will show that $\A=\iota[C(K)]$ is not a masa.
Consider $g=\sum_{n\in \N}f_n/2^{n+1}\in C(K)$, where 
$f_n\in C(K)$ is real-valued and 
satisfies $f_n|(K\setminus U_n)=1$ and $f_n|\overline{U_{n+1}}=0$ for each $n\in \N$. 
Note that $\{x\}=g^{-1}[\{0\}]$.  As $\{U_n\}_{n\in \N}$ is a basis at $x$,
 whenever $h\in C(K)$ is zero on a neighborhood  of $x$, 
there is $n\in \N$ such that
$h|U_n=0$ and $(1/g)|(K\setminus U_n)$ extends to a function $g_n\in C(K)$. So 
$$h=gg_nh=hg_ng.\leqno (*)$$

Let $T\in \bb$ be self-adjoint such that $\pi(T)=\iota(g)$. By the Weyl-von Neumann theorem 
there is an orthonormal basis $\E$ of $\ell_2$ and $T', S\in \bb$ such that  $T=T'+S$ and
$T'\in \D(\E)$ and $S\in \K$. The image $\pi[\D(\E)]$ is $*$-isomorphic to
$C(\N^*)$ by Lemma \ref{pi} and contains $\iota(g)$. Let $\phi: C(\N^*)\rightarrow \pi[\D(\E)]$ be the $*$-isomorphism.

We will find a self-adjoint $R\in \QQ$ such $R\not \in \A$ but $R$ commutes with the entire $\A$.
This will  show that $\A$ is not a masa of $\QQ$ by Lemma  \ref{sa-masa}.

 The spectrum
of $g$ contains $0$ so the same holds for $G=\phi^{-1}(\iota (g))\in C(\N^*)$ as the spectrum does not change 
when we pass to a bigger C*-algebra (1.11.6 of \cite{ilijas-book}).  So $G^{-1}[\{0\}]$ is a nonempty
$G_\delta$-set ($G$ as a continuous real-valued function on $\N^*$) 
 in $\N^*$ and such sets have nonempty interior (Lemma 1.2.3 of \cite{handbook-van-mill}).
So there is a family $\F$ of cardinality continuum  (corresponding to an almost disjoint family of subsets of $\N$)
of pairwise disjoint nonempty clopen sets 
$U\subseteq G^{-1}[\{0\}]$.
For all $U\in \F$ we have  
 $$\iota(g)\phi(1_U)=\phi(G\cdot 1_U)=\phi(0)=0.\leqno (**)$$
 If $g'\in C(K)$ is a nonzero projection  and $gg'=0$, then $g'=1_{\{x\}}$. 
So at most one of the functions $\iota(1_U)$ for $U\in \F$ may be in $\A$, namely the one 
satisfying $\iota(1_U)=\phi(1_{\{x\}})$. Take some other $U\in \F$
so that $\phi(1_U)\not\in \A$ (so we only use the fact that $\F$ has at least two elements).

 By (*) and (**), when $h\in C(K)$ is zero on a  neighborhood of $x$, then 
 $\iota(h)\phi(1_U)=\iota(hg_n)\iota(g)\phi(1_U)=0$. Of course
 $\phi(1_U)$ commutes with all multiples of the unit in $\QQ$ which is the unit in $\A$
 and is equal to $\iota(1_K)$. However,
  the collection of all elements of $C(K)$ which are either constant or are zero on a neighborhood
of $x$ is self-adjoint and generates a norm dense algebra in $C(K)$ by the Stone-Weierstrass theorem. It follows that 
the self-adjoint $R=\phi(1_U)$
commutes with all elements of $\A$ and  $R\not\in \A$ so $\A$ is not a masa of $\QQ$.

\end{proof}

The remainder of this section is devoted to a construction of many $*$-isomorphically distinct 
masas in $\QQ$ without making an additional set-theoretic assumption. For the remainder of this section
fix an orthonormal basis $\E=(e_k)_{k\in \N}$ of $\ell_2$. 

\begin{definition}\label{def-Azero} $ $
\begin{enumerate}
\item $P_A\in \bb$ is the orthogonal projection onto the closed subspace of $\ell_2$
spanned by $\{e_{2k}, e_{2k+1}: k\in A\}$ for $A\subseteq \N$.
\item $\BB_n=\B(\ell_2(\{e_{2n}, e_{2n+1}\}))$.
\item $I_n\in \BB_n$ is the unit of $\BB_n$
\item  $\A_0$ is the set of all operators $T\in \bb$ satisfying  for all $k, m\in\N$
$$\langle T(e_k), e_m)\rangle \not=0 \ \Rightarrow\  \exists n\in \N \ \{k, m\}
\subseteq \{2n, 2n+1\}.$$
\item If $T\in \A_0$ and $n\in \N$, then $T_n\in \BB_n$
is given by $T_n(e_{2n+i})=T(e_{2n+i})$ for $i=0,1$. 
\item In the circumstances as in (5) we will write $T=(T_n)_{n\in \N}$.
\item $\D_0$ is the set of all operators $T=(T_n)_{n\in \N}\in \A_0$ for which there is a sequence
$(a_n)_{n\in \N}\in \ell_\infty$ such that $T_n=a_nI_n$
\item If $X\subseteq \N$, then $\A_0(X)$ is the set of all elements $T=(T_n)_{n\in \N}$ of $\A_0$
which satisfy $T_n=0$ for all $n\in \N\setminus X$. 
\item $\D_0(X)=\A_0(X)\cap \D_0$, where $X\subseteq \N$.

\end{enumerate}
\end{definition}

\begin{lemma}\label{limit-compact} Suppose that $T=(T_n)_{n\in \N}\in \A_0$. Then $T$ is compact if and only
if $$\lim_{n\rightarrow \infty}\|T_n\|=0.$$
\end{lemma}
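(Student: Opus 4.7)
The plan is straightforward: the structure of $\A_0$ forces $T$ to be ``block-diagonal'' with respect to the orthogonal decomposition $\ell_2 = \bigoplus_{n \in \N} \ell_2(\{e_{2n}, e_{2n+1}\})$, and for such block-diagonal operators compactness is equivalent to the blocks tending to zero in norm. In particular, because each summand $\ell_2(\{e_{2n}, e_{2n+1}\})$ is $T$-invariant and orthogonal to the others, one has $\|T\| = \sup_{n \in \N}\|T_n\|$.

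For the forward direction, I would use the first of the two bullet facts about compact operators recalled in the preliminaries: if $T$ is compact and $(v_n)_{n \in \N}$ is an orthonormal sequence in $\ell_2$, then $\|T(v_n)\| \to 0$. For each $n$, use finite-dimensionality of $\BB_n$ to pick a unit vector $v_n \in \ell_2(\{e_{2n}, e_{2n+1}\})$ achieving $\|T_n(v_n)\| = \|T_n\|$. The $v_n$ are pairwise orthogonal since they lie in mutually orthogonal subspaces, and by the block structure $T(v_n) = T_n(v_n)$, so $\|T_n\| \to 0$.

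For the reverse direction, set $T^{(N)} = \sum_{n \leq N} T_n$ (extended by $0$ on the remaining summands). Each $T^{(N)}$ has finite-dimensional range, hence is compact. By the block-diagonal norm formula,
\[
\|T - T^{(N)}\| = \sup_{n > N}\|T_n\|,
\]
which tends to $0$ by hypothesis. Thus $T$ is a norm limit of finite-rank operators and is therefore compact.

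I do not expect a real obstacle here; the only point requiring a line of care is the identity $\|T\| = \sup_n \|T_n\|$ for block-diagonal operators, which is immediate from the orthogonal direct sum decomposition of $\ell_2$ induced by the pairs $\{e_{2n}, e_{2n+1}\}$.
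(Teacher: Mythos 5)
Your proof is correct and is essentially the same as the paper's: the forward direction uses the identical orthonormal-witness argument, and your explicit finite-rank truncation $T^{(N)}$ together with the block-norm identity is just a slightly more explicit packaging of the paper's estimate $\|T(v)\|^2=\sum_n\|T_n(v_n)\|^2\leq\varepsilon^2$ on the tail subspace.
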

\begin{proof}
If the limit is not zero, then there are infinitely many $n\in \N$ and $\varepsilon>0$ such that
$\|T_n(v_n)\|>\varepsilon$ for some unit vectors $v_n\in \ell_2(\{2n, 2n+1\})$. As such 
$v_n$s are pairwise orthogonal we conclude that $T$ is not compact.

Now suppose that the limit is zero. Let $k\in \N$ be such that $\|T_n\|\leq\varepsilon$
for all $n>k$. It is enough to show that if 
 $v\in \ell_2(F)$ is of norm one for finite $F$ satisfying $\min(F)>2k+1$, then $\|T(v)\|\leq \varepsilon$.
 Such $v$ is of the form $v=\sum_{n\in G}v_n$, where $v_n\in \ell_2(F\cap \{2n, 2n+1\})$ and
 $G=\{n\in \N: F\cap \{2n, 2n+1\}\not=\emptyset\}$ with 
 $\sum_{n\in G}\|v_n\|^2=1$.  So as $T_n$s have orthogonal
 ranges for distinct $n\in \N$ we have 
 $$\|T(v)\|^2=\|\sum_{n\in G}T(v_n)\|^2=\sum_{n\in G}\|T_n(v_n)\|^2\leq
 \sum_{n\in G}\varepsilon^2\|v_n\|^2\leq \varepsilon^2,$$
 as required.  
\end{proof}

\begin{definition}\label{def-almost}
We say that $T, S\in \bb$ almost commute if $[T, S]\in \K$.
If $\A\subseteq \bb$ is a C*-subalgebra, then $\B\subseteq \A$ is called an
almost masa of $\A$ if $\B$ is maximal (with respect to the inclusion)  C*-subalgebra of $\A$  consisting of elements
of $\A$ which almost pairwise commute.
\end{definition}

\begin{lemma}\label{sa-a-masa}
Suppose that $\A$ is a C*-subalgebra of $\bb$. For every almost commutative C*-subalgebra $\A'$ 
of $\A$ there is an almost masa $\B$ of $\A$ such that $\A'\subseteq\B$.
\end{lemma}
\begin{proof}
$\pi[\A']\subseteq \pi[\A]$ is a commutative C*-subalgebra of $\pi[\A]$.  So there is
a masa $\B'$ of $\pi[A]$ which contains $\pi[\A']$. Let $\B=\pi^{-1}[\B']\cap\A$.  Then
$\A'\subseteq \B\subseteq \A$ and $\B$ is a C*-algebra because
 the image of a C*-algebra under a $*$-homomorphism is closed (II.5.1.2 of \cite{blackadar}).  Also
if $b, b'\in \B$, then $\pi(b), \pi(b')\in \B'$, so $\pi(b)\pi(b')-\pi(b')\pi(b)=\pi(bb'-b'b)=0$
and so $bb'-b'b\in \K$ as needed for $\B$ being almost commuting.
If $\B\subseteq \B''\subseteq \A$ is almost commuting C*-algebra, then $\pi[\A]\supseteq \pi[\B'']\supseteq \B'$
is commutative C*-subalgebra of $\pi[\A]$, so  $\pi[\B'']=\B'$, since $\B'$ is a masa of $\pi[\A]$.
It follows that $\B''\subseteq \pi^{-1}[\B']\cap \A=\B$ and so $\B''=\B$ which shows that $\B$ is almost masa of $\A$.
\end{proof}

\begin{lemma}\label{crucial} Suppose that $\A\subseteq \bb$ satisfies $\D_0\subseteq \A\subseteq \A_0$
and $\A$ is an almost masa of $\A_0$. Then $\pi[\A]$ is a masa of $\QQ$.
\end{lemma}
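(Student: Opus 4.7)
The approach is via Lemma~\ref{sa-masa}: it suffices to show that every self-adjoint element of $\QQ$ commuting with all of $\pi[\A]$ already lies in $\pi[\A]$. Lift such an element to a self-adjoint $T\in\bb$; the hypothesis becomes $[T,A]\in\K$ for every $A\in\A$, and in particular $[T,D]\in\K$ for every $D\in\D_0$, since $\D_0\subseteq\A$.

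The commutant of $\D_0$ in $\bb$ is exactly $\A_0$ (block-diagonal operators are precisely those commuting with every scalar-on-pair diagonal). Applying the Johnson and Parrott theorem in its essential-commutant form for type~I von~Neumann subalgebras---the strengthening of the masa-to-masa statement quoted in the Introduction---we obtain a decomposition $T=S+K$ with $S\in\A_0$ and $K\in\K$. Replacing $S$ by $(S+S^*)/2$ we may assume $S$ is self-adjoint, so $\pi(S)=\pi(T)$ and, since $S-T\in\K$, the element $S$ almost commutes with every $A\in\A$.

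The argument closes by invoking the maximality in the definition of almost masa. Form the $C^*$-subalgebra $\B=C^*(\A\cup\{S\})\subseteq\A_0$. Its generating set $\A\cup\{S\}$ is closed under taking adjoints (since $\A$ is a $C^*$-algebra and $S=S^*$), and its elements pairwise almost commute: members of $\A$ by hypothesis, and $S$ with each $A\in\A$ by the preceding step. Using the Leibniz identity $[XY,Z]=X[Y,Z]+[X,Z]Y$ together with the facts that $\K$ is a two-sided ideal of $\bb$ and is norm-closed, almost-commutation propagates through sums, products, and norm limits; hence every pair in $\B$ almost commutes. By the maximality of $\A$ as an almost masa of $\A_0$ we must have $\B\subseteq\A$, so $S\in\A$ and $\pi(T)=\pi(S)\in\pi[\A]$, as required.

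The principal obstacle is the second paragraph. The Johnson--Parrott theorem as quoted in the excerpt concerns masas of $\bb$, but $\D_0$ is not a masa (its commutant $\A_0$ is strictly larger). What is needed is the stronger essential-commutant form---that for a type~I von~Neumann subalgebra $\mathcal M\subseteq\bb$ one has essential commutant $\mathcal M'+\K$---applied to the abelian, hence type~$I_1$, algebra $\D_0$; this classical strengthening, due to Johnson and Parrott, is precisely what lifts $\pi(T)$ into $\A_0$ modulo compacts, after which the remaining steps are elementary manipulations with the ideal $\K$ and the maximality of $\A$.
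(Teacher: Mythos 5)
Your proof is correct, and the overall skeleton (reduce via Lemma~\ref{sa-masa}, lift to a self-adjoint $T$, decompose $T=S+K$ with $S\in\A_0$ and $K\in\K$, symmetrize $S$, then invoke maximality of the almost masa) matches the paper's. The one genuine divergence is how you obtain the decomposition $T=S+K$. You identify $\D_0'=\A_0$ (correct) and then cite the essential-commutant form of Johnson--Parrott for the abelian von Neumann algebra $\D_0$: that every operator commuting with $\D_0$ modulo $\K$ lies in $\D_0'+\K$. This is indeed the main theorem of \cite{parrott} for abelian von Neumann algebras on a separable Hilbert space, so the step is legitimate --- though you should cite it as the abelian case specifically rather than a blanket ``type I'' statement, since the general essential-commutant problem for arbitrary von Neumann algebras is more delicate (the type $\mathrm{II}_1$ factor case was not settled by Johnson and Parrott). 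The paper instead arranges to use only the masa-to-masa corollary quoted in its preliminaries: it splits $\ell_2=\ell_2(2\N)\oplus\ell_2(2\N+1)$, identifies $\bb$ with $M_2(\bb)$, observes that the four matrix entries $S_{i,j}$ of an operator essentially commuting with $\D_0$ essentially commute with every diagonal operator, applies Johnson--Parrott to the atomic masa entrywise, and reassembles. Your route is shorter and conceptually cleaner but leans on the stronger form of the external theorem; the paper's route is longer but self-contained relative to the weaker statement it actually quotes. Your closing maximality argument (forming $C^*(\A\cup\{S\})\subseteq\A_0$, propagating almost-commutation through sums, products and norm limits via the Leibniz identity and the closedness of $\K$) is exactly the paper's, spelled out in slightly more detail.
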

\begin{proof}  Note that since $\A$ is an almost masa of $\A_0$, $\pi[\A]$ is commutative,
and so by Lemma \ref{sa-masa} to prove that $\pi[\A]$ is a masa of $\QQ$ it is enough
to show that any self-adjoint element of $\QQ$ which commutes with entire $\pi[\A]$
belongs to $\pi[\A]$.  So fix such a self-adjoint element. We may assume that it is  of the form
$\pi[S]$, where $S\in \bb$ is self-adjoint.  As $\pi[S]$ commutes with all elements of $\pi[\A]$ we have that
$S$ almost commutes with all elements of $\A$.
We will show that $S=V+W$, where $V\in \A$ and $W\in \K$, i.e., that $\pi(S)\in \pi[\A]$.
The idea of the proof is to transfer this setup to a setup where we can apply the theorem of Johnson and Parrott.

Consider Hilbert spaces $\ell_2(2\N)=\overline{span}(\{e_{2n}: n\in \N\})$ and
$\ell_2(2\N+1)=\overline{span}(\{e_{2n+1}: n\in \N\})$. We have $\ell_2=\ell_2(2\N)\oplus\ell_2(2\N+1)$.
Fix the induced $*$-isomorphism 
$$\eta: \bb\rightarrow \B(\ell_2(2\N)\oplus\ell_2(2\N+1)).$$
Note that there is a unitary isomorphism of Hilbert spaces $\ell_2(2\N)\oplus\ell_2(2\N+1)$ and $\ell_2\oplus \ell_2$
which sends $\{0\}\oplus\ell_2(2\N+1)$ onto  $\{0\}\oplus \ell_2$ and
$\ell_2(2\N)\oplus\{0\}$ onto $\ell_2\oplus \{0\}$ and that it induces a $*$-isomorphism 
$$\psi: \B(\ell_2(2\N)\oplus\ell_2(2\N+1))\rightarrow \B(\ell_2\oplus\ell_2).$$
As in \cite{murphy} pages 94-95 we see that there is a $*$-isomorphism 
$$\phi: \B(\ell_2\oplus\ell_2)\rightarrow M_2(\bb),$$
 where  $M_2(\bb)$ is the $C^*$-algebra of
two by two matrices with their entries in $\bb$ with the obvious operations and the norm inherited from $\B(\ell_2\oplus\ell_2)$
as in Theorem 3.4.2 of \cite{murphy}.
So given $R\in \bb$ we may represent it by $(\phi\circ\psi\circ\eta)(R)\in M_2(\bb)$ which is of the form
$$\begin{pmatrix}
R_{0, 0} & R_{0, 1}  \\
R_{1, 0} & R_{1, 1} 
\end{pmatrix},
$$
where, for $i, j\in \{0,1\}$ we have are  $R_{i, j}\in\bb$  satisfying for each $k, m\in \N$
$$\langle R_{i, j}(e_k), e_m\rangle=\langle R(e_{2k+i}), e_{2m+j}\rangle.$$
Moreover,  $R$ is finite dimensional, if and only if   all $R_{i,j}$s for $i, j\in \{0,1\}$ are finite dimensional, so 
$$ \hbox{$R$ is compact, if and only if  all $R_{i,j}$s for $i, j\in \{0,1\}$ are compact.}\leqno (*)$$
 Note that 
$T\in \A_0$ if and only if $T_{i, j}$s are all diagonal for $i, j\in \{0,1\}$ and
$T\in \D_0$ if and only if $T_{0,0}=D=T_{1,1}$ is diagonal and $T_{0,1}=T_{1,0}=0$. By 
inspection we check that
$$\Bigg[\begin{pmatrix}
S_{0, 0} & S_{0, 1}  \\
S_{1, 0} & S_{1, 1} 
\end{pmatrix}, 
\begin{pmatrix}
D & 0  \\
0 & D
\end{pmatrix}\Bigg]=
\begin{pmatrix}
[S_{0, 0}, D] & [S_{0, 1}, D]  \\
[S_{1, 0}, D] & [S_{1, 1}, D] 
\end{pmatrix}.
$$
So if $S$ almost commutes with all $T\in \D_0\subseteq\A$, then $S_{i, j}$ almost commute with all diagonal operators
$D$ for  $i, j\in\{0,1\}$ by (*). As by the  Johnson and Parrott theorem the image of the algebra of diagonal operators under $\pi$
is a masa in $\QQ$. By Lemma \ref{sa-masa} this means that  $\pi[S_{i, j}]$s are in this masa, which means that
$S_{i, j}=D_{i, j}+W_{i, j}$, where $D_{i, j}$ are diagonal and $W_{i, j}$ are compact for $i, j\in \{0,1\}$.
This, in turn by (*), means that $S=V+W$, where $V\in \A_0$ and $W\in \K$.  

Before going further we would like to have the above decomposition of $S$, where additionally
$V\in \A_0$ is self-adjoint. To obtain it we will use the choice of $S$ as a self-adjoint element of $\QQ$.
From this we have $S=V^*+W^*$, so $S=(V+V^*)/2+(W+W^*)/2$. Here $(V+V^*)/2\in \A_0$ is self-adjoint and
$(W+W^*)/2\in \K$, so we can replace $V$ with $(V+V^*)/2$ and assume that $V\in \A_0$ is
self-adjoint indeed.

For any element $U\in \A$
$$[V, U]=[S-W, U]=[S, U]+W'=W''$$
for some $W', W''\in \K$ since $S$ almost commutes with all elements of $\A$. So
now as $V=V^*\in \A_0$ and $V=V^*$ almost commutes with the entire $\A$ and
$\A$ is an almost masa in $\A_0$, it follows that the C*-algebra generated by $\A$ and $V$
must be in fact $\A$, hence $V\in \A$. 
 So $S=V+W$, where $V\in \A$ and $W\in \K$ as required.
\end{proof}

The almost masas $M_{\X, \mathfrak A}$ of $\A_0$ we will construct in the remainder of this section
will satisfy Lemma \ref{crucial}, i.e., they will satisfy $\mathcal D_0\subseteq M_{\X, \mathfrak A}\subseteq \A_0$.
We are ready for more definitions leading to Definition \ref{def-M}.

\begin{definition}\label{def-Ds}
 For $\alpha\in(71/72, 1]$, $X\subseteq \N$ and $n\in \N$ we define 
 \begin{enumerate}
 \item  an orthonormal 
 basis $\F_{\alpha, n}=\{f_{\alpha, 2n}, f_{\alpha, 2n+1}\}$ of $span(\{e_{2n}, e_{2n+1}\})$, where
 \begin{enumerate}
 \item $f_{\alpha, 2n}=\alpha e_{2n}+ \sqrt{1-\alpha^2}e_{2n+1}$, 
 \item $f_{\alpha, 2n+1}=\sqrt{1-\alpha^2}e_{2n}-\alpha e_{2n+1}$.
 \end{enumerate}
 \item $F_{\alpha, n}\in \B_n$ is the projection on $\C f_{\alpha, 2n}$ and $F_{\alpha, n}^\perp\in \B_n$ its complementary
 projection in $\B_n$ onto $\C f_{\alpha, 2n+1}$. 
  \item $E_ n\in \B_n$ is the projection on $\C e_{2n}$ and $E_n^\perp\in \B_n$ its complementary
 projection in $\B_n$  onto $\C e_{2n+1}$. 
\item The algebra $\D_{\alpha, n}\subseteq \B_n$ 
of all elements of $\B_n$ diagonalizable with respect to
$\F_{\alpha, n}$.
\item The algebra $\D(X, \alpha)$
of all $T=(T_n)_{n\in \N}\in \A_0(X)$ such that
$T_n\in \D_{\alpha, n}$ for every $n\in X$.
\item The algebra $\D_\kk(X, \alpha)$
of all $T\in \A_0(X)$ such that $T=S+R$ for $S\in \D(X, \alpha)$ and
$R\in \K\cap \A_0(X)$.
\end{enumerate}
 \end{definition}
 
 The choice of $\alpha\in (71/72, 1]$ is motivated by the calculations in the following lemma which will
 be exploited in the proof of Lemma \ref{gen-projections}:
 
 \begin{lemma}\label{71/72} If $\alpha\in(71/72, 1]$ and $n\in\N$, then $\|E_n-F_{\alpha, n}\|< 1/3$. 
 Consequently if $Q, Q^{\perp}$ are complementary one-dimensional projections
 in $\B(\ell_2(\{e_{2n}, e_{2n+1}\}))$, then one of the following conditions holds: 
 \begin{enumerate}
 \item $\|F_{\alpha, n}-Q\|, \|F_{\alpha, n}-Q^\perp\|, \|F_{\alpha, n}^\perp-Q\|, \|F_{\alpha, n}^\perp-Q^\perp\|\geq 1/6$ or
 \item $\|E_n-Q\|<1/2$ and $\|E_n-Q^\perp\|> 1/2$,
 \item $\|E_n-Q^\perp\|<1/2$ and $\|E_n-Q\|> 1/2$,
 \end{enumerate}
 \end{lemma}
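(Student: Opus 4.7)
The plan is to reduce everything to a two-dimensional computation inside $\BB_n$. For the first assertion, with respect to the basis $\{e_{2n}, e_{2n+1}\}$ we have $E_n = \begin{pmatrix} 1 & 0 \\ 0 & 0 \end{pmatrix}$ and $F_{\alpha,n} = \begin{pmatrix} \alpha^2 & \alpha\sqrt{1-\alpha^2} \\ \alpha\sqrt{1-\alpha^2} & 1-\alpha^2 \end{pmatrix}$, so $E_n - F_{\alpha,n}$ is self-adjoint with trace $0$ and determinant $-(1-\alpha^2)$, hence operator norm $\sqrt{1-\alpha^2}$. Since $1 - (71/72)^2 = 143/5184 < 1/9$, this yields $\|E_n - F_{\alpha, n}\| < 1/3$ for every $\alpha \in (71/72, 1]$.

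For the trichotomy I would use the standard identity $\|P_v - P_w\| = \sqrt{1 - |\langle v, w\rangle|^2}$ valid for any two rank-one projections $P_v, P_w$ arising from unit vectors $v, w$, verified by the same trace-determinant computation applied to an arbitrary pair. Pick a unit vector $v \in \ell_2(\{e_{2n}, e_{2n+1}\})$ with $Q = P_v$ and set $a = |\langle v, f_{\alpha, 2n}\rangle|$. Since $\{f_{\alpha, 2n}, f_{\alpha, 2n+1}\}$ is an ONB, the four quantities in (1) take only two values: $\sqrt{1 - a^2}$ for the pairs $(F_{\alpha, n}, Q)$ and $(F_{\alpha, n}^\perp, Q^\perp)$, and $a$ for the pairs $(F_{\alpha, n}, Q^\perp)$ and $(F_{\alpha, n}^\perp, Q)$; the pairings use the elementary identity $\|A - B^\perp\| = \|A^\perp - B\|$ available in $\BB_n$ since $A + A^\perp = B + B^\perp = I_n$.

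Assume now that (1) fails. Then $\sqrt{1-a^2} < 1/6$ or $a < 1/6$, and these cannot occur simultaneously since they would force $1 < 2/36$. In the first subcase $\|F_{\alpha, n} - Q\| < 1/6$, so the triangle inequality combined with the first part gives $\|E_n - Q\| \leq \|E_n - F_{\alpha, n}\| + \|F_{\alpha, n} - Q\| < 1/3 + 1/6 = 1/2$. Setting $b = |\langle v, e_{2n}\rangle|$ the rank-one formula yields $\|E_n - Q\| = \sqrt{1-b^2}$ and $\|E_n - Q^\perp\| = b$, so $b > \sqrt{3}/2 > 1/2$ and (2) holds. The subcase $a < 1/6$ proceeds symmetrically: $\|F_{\alpha, n}^\perp - Q\| < 1/6$ together with $\|E_n^\perp - F_{\alpha, n}^\perp\| = \|E_n - F_{\alpha, n}\| < 1/3$ gives $\|E_n - Q^\perp\| = \|E_n^\perp - Q\| < 1/2$, whence $\|E_n - Q\| = \sqrt{1-b^2} > \sqrt{3}/2 > 1/2$ and (3) holds.

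The computation is essentially routine; the only step requiring careful bookkeeping is matching which of the four distances in (1) is controlled by $a$ and which by $\sqrt{1-a^2}$, and then translating a bound on $\|F_{\alpha, n} - Q\|$ to a bound on $\|E_n - Q\|$ via the first part. The numerical threshold $71/72$ is more generous than strictly needed — the argument only requires $\alpha > 2\sqrt{2}/3$ so that $\sqrt{1-\alpha^2} < 1/3$ — but presumably the stronger bound is chosen to provide slack for the subsequent lemmas.
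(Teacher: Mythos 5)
Your proof is correct and follows the same overall strategy as the paper's: reduce condition (1) to the two inequalities $\|F_{\alpha,n}-Q\|,\ \|F_{\alpha,n}-Q^\perp\|\geq 1/6$ via $\|A^\perp-B\|=\|A-B^\perp\|$, split into the two subcases where one of these fails, and apply the triangle inequality together with the first part. The differences are computational rather than structural. For the first part you compute the norm exactly as $\|E_n-F_{\alpha,n}\|=\sqrt{1-\alpha^2}$ by the trace--determinant argument, while the paper estimates $\|E_n(v)-F_{\alpha,n}(v)\|\leq 2\|e_{2n}-f_{\alpha,2n}\|=2\sqrt{2-2\alpha}$; both give $<1/3$ for $\alpha>71/72$. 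For the second halves of (2) and (3) you invoke the identity $\|P_v-P_w\|=\sqrt{1-|\langle v,w\rangle|^2}$ to get $\|E_n-Q\|=\sqrt{1-b^2}$ and $\|E_n-Q^\perp\|=b$, whereas the paper gets the same conclusion more cheaply from $\|E_n-Q^\perp\|\geq\|Q-Q^\perp\|-\|E_n-Q\|>1-1/2$. One small correction to your closing remark: the threshold $71/72$ is not extra slack for later lemmas; it is exactly the value forced by the paper's cruder bound $2\sqrt{2-2\alpha}<1/3$, i.e.\ $2-2\alpha<1/36$. Your sharper computation would indeed allow any $\alpha>2\sqrt2/3$, but nothing in the paper relies on that.
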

 \begin{proof} For the first part of the lemma let $v\in \ell_2(\{e_{2n}, e_{2n+1}\})$ be a unit vector. We  have
 $E_n(v)=\langle v, e_{2n}\rangle e_{2n}$ and 
 $F_{\alpha, n}(v)=\langle v, f_{\alpha, 2n}\rangle f_{\alpha, 2n}$.  So
 $$\|E_n(v)-F_{\alpha, n}(v)\|= \|\langle v, e_{2n}\rangle e_{2n}-\langle v, f_{\alpha, 2n}\rangle e_{2n}+
 \langle v, f_{\alpha, 2n}\rangle e_{2n}-\langle v, f_{\alpha, 2n}\rangle f_{\alpha, 2n}\|=$$
 $$=\|\langle v, e_{2n}-f_{\alpha, 2n}\rangle e_{2n} + \langle v, f_{\alpha, 2n}\rangle(e_{2n}-f_{\alpha, 2n})\|\leq$$
 $$\|v\|\|e_{2n}-f_{\alpha, 2n}\|\|e_{2n}\|+\|v\|\|f_{\alpha, 2n}\|\|e_{2n}-f_{\alpha, 2n}\|=2\|e_{2n}-f_{\alpha, 2n}\|=$$
 $$=2\sqrt{(1-\alpha)^2+(1-\alpha^2)}=2\sqrt{2-2\alpha}<2\sqrt{144/72-142/72}=2\sqrt{1/36}=1/3.$$
 
 For the second part  first
 note that
 condition (1)  is equivalent to the conjunction of only two inequalities
 $\|F_{\alpha, n}-Q\|, \|F_{\alpha, n}-Q^\perp\|\geq 1/6$ because
$\|F_{\alpha, n}^\perp-Q\|=\|(I_n-F_{\alpha, n})-(I_n-Q^\perp)\|=\|F_{\alpha, n}-Q^\perp\|$
and analogously $\|F_{\alpha, n}^\perp-Q^\perp\|=\|F_{\alpha, n}-Q\|$. 
Now assume that (1) fails. The first  case is that this is because
$\|F_{\alpha, n}-Q\|<1/6$. Then by the first part of the lemma $\|E_n-Q\|<1/3+1/6=1/2$, so we have (2) since
$\|Q-Q^\perp\|\geq 1$.
The second subcase when $\|F_{\alpha, n}-Q^\perp\|<1/6$ employs an analogous argument to conclude (3). 
 \end{proof}

 \begin{lemma}\label{dx} For $X\subseteq \N$ and $\alpha\in (71/72, 1]$
 the C*-algebra $\D_\kk(X, \alpha)$
 is almost masa of $\A_0(X)$
 \end{lemma}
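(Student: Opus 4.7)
The plan is to verify separately the two halves of the almost-masa property: first that $\D_\kk(X, \alpha)$ is a C*-subalgebra of $\A_0(X)$ whose elements pairwise almost commute, and second that it is maximal among such subalgebras. The first half is formal. The algebra $\D(X, \alpha)$ is commutative because, for each $n \in X$, any two elements of $\D_{n, \alpha}$ are simultaneously diagonal in the basis $\F_{\alpha, n}$; and $\K \cap \A_0(X)$ is a closed two-sided ideal of $\A_0(X)$, so the sum $\D(X, \alpha) + (\K \cap \A_0(X)) = \D_\kk(X, \alpha)$ is a C*-subalgebra. Given $T = S + R$ and $T' = S' + R'$ in $\D_\kk(X, \alpha)$ with $S, S' \in \D(X, \alpha)$ and $R, R' \in \K \cap \A_0(X)$, one has $[S, S'] = 0$, while the remaining three commutators $[S, R']$, $[R, S']$, $[R, R']$ are all compact, so $[T, T'] \in \K$.

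The content of the lemma lies in the maximality. Suppose $T = (T_n)_{n \in \N} \in \A_0(X)$ almost commutes with every element of $\D_\kk(X, \alpha)$; I claim $T \in \D_\kk(X, \alpha)$. The idea is to test almost-commutativity against the single projection $P = (P_n)_{n \in \N} \in \D(X, \alpha)$ with $P_n = F_{\alpha, n}$ for $n \in X$ and $P_n = 0$ for $n \notin X$. Since $[T, P] \in \K$ is block-diagonal with $[T, P]_n = [T_n, F_{\alpha, n}]$, Lemma \ref{limit-compact} gives $\|[T_n, F_{\alpha, n}]\| \to 0$ as $n \to \infty$ in $X$. For each such $n$ decompose $T_n = D_n + O_n$, where $D_n \in \D_{n, \alpha}$ is the $\F_{\alpha, n}$-diagonal part and $O_n$ is the off-diagonal remainder. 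A direct $2 \times 2$ computation in the basis $\F_{\alpha, n}$, using that $F_{\alpha, n}$ is a rank-one projection in $\BB_n$, shows that $\|O_n\| = \|[T_n, F_{\alpha, n}]\|$: the nonzero entries of both matrices coincide up to sign. Therefore $\|O_n\| \to 0$ as $n \to \infty$ in $X$, and a second application of Lemma \ref{limit-compact} yields $O = (O_n) \in \K \cap \A_0(X)$; setting $D = (D_n) \in \D(X, \alpha)$ gives $T = D + O \in \D_\kk(X, \alpha)$.

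I do not expect any real obstacle here. The essential observation is that in a two-dimensional block the commutator with a rank-one projection captures exactly the off-diagonal part, so almost-commutation with the single element $P$ is already enough to force $T$ into $\D_\kk(X, \alpha)$; the rest is a double application of Lemma \ref{limit-compact} sandwiching the block-wise matrix identity. No analogue of the finer spectral results of Lemmas \ref{commutator} or \ref{substitution} is required at this stage.
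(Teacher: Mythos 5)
Your proof is correct, and it takes a genuinely different route from the paper's. The paper proves maximality by contradiction via the Johnson--Parrott theorem: it observes that $\D(X,\alpha)$ is an atomic masa of $\B(\ell_2(\{e_{2n},e_{2n+1}:n\in X\}))$, so its image under $\pi$ is a masa of the corresponding corona, whence any $S$ almost commuting with $\D_\kk(X,\alpha)$ has $\pi(S)$ in that masa and therefore lifts to $\D(X,\alpha)$ plus a compact. You instead exploit the $2\times 2$ block structure directly: testing almost-commutation against the single block projection $P=(F_{\alpha,n})_{n\in X}$ and computing, in the basis $\F_{\alpha,n}$, that $[T_n,F_{\alpha,n}]=\bigl(\begin{smallmatrix}0&-b\\ c&0\end{smallmatrix}\bigr)$ while the off-diagonal part is $O_n=\bigl(\begin{smallmatrix}0&b\\ c&0\end{smallmatrix}\bigr)$, both of norm $\max(|b|,|c|)$; two applications of Lemma \ref{limit-compact} then finish the argument. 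This identity is correct, and your version even proves the slightly stronger statement that almost-commutation with the one projection $P$ already forces membership in $\D_\kk(X,\alpha)$. What each approach buys: the paper's is shorter given that Johnson--Parrott is already a standing tool in the paper and generalizes beyond two-dimensional blocks; yours is elementary, self-contained, and quantitative (it gives the explicit norm equality rather than a maximality abstract nonsense step). Your first half (that $\D(X,\alpha)+(\K\cap\A_0(X))$ is a closed C*-subalgebra because the sum of a C*-subalgebra and a closed two-sided ideal is closed) is the same underlying fact the paper packages as $\pi^{-1}[\pi[\D(X,\alpha)]]\cap\A_0(X)$.
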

 \begin{proof} First note that $\D_\kk(X, \alpha)$ is closed because it is of the form
 $\pi^{-1}[\pi[D(X, \alpha)]]\cap A_0(X)$ and 
 because the image of a C*-algebra under a $*$-homomorphism is closed (II.5.1.2 of \cite{blackadar}).
 So it is clear that it is a C*-subalgebra of $\A_0(X)$.
 
 Suppose that  $\D_\kk(X, \alpha)$
 is not almost masa in $\A_0(X)$ and we will aim at contradiction.
 So by Lemma \ref{sa-a-masa} there is a bigger than $\D_\kk(X, \alpha)$ almost commutative C*-subalgebra $\B$ of $\A_0(X)$.
 Take $S\in \B\setminus \D_\kk(X, \alpha)$. Then $S^*\in \B\setminus \D_\kk(X, \alpha)$ and
 both $S, S^*$ 
 almost commute with each other and with all elements of $\D_\kk(X, \alpha)$.
 So $\pi(S), \pi(S^*)\in \pi[\A_0(X)]$ commute with each other and with all elements of $\pi[\D(X, \alpha)]$.
 As $\D(X, \alpha)$ is an atomic masa in $\B(\ell_2(\{e_{2n}, e_{2n+1}: n\in X\}))$,
 by Johnson and Parrott theorem $\pi[\D(X, \alpha)]$ is a masa in
 $\pi[\B(\ell_2(\{e_{2n}, e_{2n+1}: n\in X\}))]\supseteq \pi[\A_0(X)]$. 
 So $\pi(S), \pi(S^*)\in \pi[\D(X, \alpha)]$ because otherwise $\pi[\D(X, \alpha)]$
 would not be maximal.
So there are $T\in \D(X, \alpha)$ 
 and $R\in  \K$ such that $S=T+R$. As $R=S-T$ we conclude that $R\in \A_0(X)$ as well. 
 So $S\in \D_\kk(X, \alpha)$ a contradiction with the choice of $S$.
 \end{proof}

 \begin{definition}\label{def-M} Suppose that $\X$ is a family of subsets of $\N$ and 
  $\mathfrak A=(\alpha_X: X\in \X)$ with $\alpha_X\in (71/72, 1]$
  for all $X\in \X$. Then we define
$$\M_{\X, \mathfrak A}=\bigcap_{X\in \X}\{T\in \A_0: P_{X}TP_{X}\in \D_\kk(X, \alpha_X)\}. $$
 \end{definition}

 \begin{lemma}\label{containsD0} Suppose that $\X$ is a family of subsets of $\N$ and 
  $\mathfrak A=(\alpha_X: X\in \X)$ with $\alpha_X\in (71/72, 1]$
  for all $X\in \X$. Then $\M_{\X, \mathfrak A}$ is a C*-subalgebra of $\A_0$ containing $\D_0$.
 \end{lemma}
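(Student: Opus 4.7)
The plan is to reduce both claims to a single observation: the compression map $\phi_X \colon \A_0 \to \A_0(X)$ defined by $\phi_X(T) = P_X T P_X$ is a $*$-homomorphism for every $X \subseteq \N$. The reason is that, by Definition \ref{def-Azero}, any $T \in \A_0$ has the block-diagonal form $T = (T_n)_{n \in \N}$ with $T_n \in \BB_n$, and $P_X$ acts as $I_n$ on the block indexed by $n \in X$ and as $0$ on the blocks indexed by $n \in \N \setminus X$. So $\phi_X(T)$ is the element of $\A_0(X)$ whose $n$-th block equals $T_n$ for $n \in X$ and $0$ otherwise. From this block description it is immediate that $\phi_X$ is linear, involution-preserving, and multiplicative (whereas the compression $S \mapsto PSP$ is not multiplicative on all of $\bb$, it is on the block-diagonal subalgebra $\A_0$, which is the point that needs attention).

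For the C*-subalgebra claim, note that by Lemma \ref{dx} each $\D_\kk(X, \alpha_X)$ is a C*-subalgebra of $\A_0(X)$. Therefore
\[
\N_X \;:=\; \phi_X^{-1}\bigl(\D_\kk(X, \alpha_X)\bigr) \;=\; \{T \in \A_0 : P_X T P_X \in \D_\kk(X, \alpha_X)\}
\]
is a C*-subalgebra of $\A_0$, being the preimage of a C*-subalgebra under a bounded $*$-homomorphism. Since $\M_{\X, \mathfrak A} = \bigcap_{X \in \X} \N_X$ is an intersection of C*-subalgebras of $\A_0$, it is itself a C*-subalgebra of $\A_0$.

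For $\D_0 \subseteq \M_{\X, \mathfrak A}$, fix $T \in \D_0$ and write $T_n = a_n I_n$ for some $(a_n)_{n \in \N} \in \ell_\infty$ as in Definition \ref{def-Azero}(7). For any $X \in \X$ the element $\phi_X(T) = (S_n)_{n \in \N}$ satisfies $S_n = a_n I_n$ for $n \in X$ and $S_n = 0$ for $n \notin X$. Each block $a_n I_n$ is a scalar multiple of the identity of $\BB_n$ and hence is diagonal with respect to every orthonormal basis of $\ell_2(\{e_{2n}, e_{2n+1}\})$, in particular with respect to $\F_{\alpha_X, n}$. Thus $\phi_X(T) \in \D(X, \alpha_X) \subseteq \D_\kk(X, \alpha_X)$ (taking the compact perturbation to be zero), so $T \in \N_X$ for every $X \in \X$ and hence $T \in \M_{\X, \mathfrak A}$.

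There is no genuine obstacle in this lemma; the only subtle point is the verification that, although compression by a projection is not in general a $*$-homomorphism on $\bb$, it does become one when restricted to the block-diagonal algebra $\A_0$ because $P_X$ itself belongs to $\A_0$ and acts as the identity on the blocks it does not kill.
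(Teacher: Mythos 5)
Your proof is correct and follows essentially the same route as the paper: the only real issue is the closedness of each set $\{T\in\A_0: P_XTP_X\in\D_\kk(X,\alpha_X)\}$, which the paper handles by writing it as $\A_0(\N\setminus X)\oplus\bigl(\pi^{-1}[\pi[\D(X,\alpha_X)]]\cap\A_0(X)\bigr)$ and you handle equivalently by observing that $P_X$ is central in $\A_0$, so compression is a $*$-homomorphism and the set is the preimage of the closed algebra $\D_\kk(X,\alpha_X)$ from Lemma \ref{dx}. Your packaging via $\phi_X^{-1}$ is a clean and slightly tidier way to state the same argument, and the verification of $\D_0\subseteq\M_{\X,\mathfrak A}$ is exactly as intended.
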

 \begin{proof}
It is enough to prove that each set
$ \{T\in \A_0: P_{X}TP_{X}\in \D_\kk(X, \alpha_X)\}$
 is a C*-algebra containing $\D_0$. Perhaps less obvious is the fact that this set is closed.
 But it is equal to 
 $$\A_0(\N\setminus X)\oplus\big(\pi^{-1}[\pi[\D(X, \alpha_X)]]\cap \A_0(X)\big)$$ so it is closed
 because the image of a C*-algebra under a $*$-homomorphism is closed (II.5.1.2 of \cite{blackadar}).
 
 \end{proof}
 
 \begin{definition}\label{def-wide} A family $\X$ of subsets of $\N$ is called wide if
 for every infinite $X'\subseteq \N$ there is $X\in \X$ such that $X\cap X'$ is infinite.
 \end{definition}
 
 \begin{lemma}\label{wide} If $\X$ is a wide family of subsets of $\N$ and $(a_n)_{n\in \N}\in \ell_\infty$, then 
 $(a_n)_{n\in \N}\in c_0$ if and only if $(a_n1_X(n))_{n\in \N}\in c_0$ for every $X\in \X$.
 \end{lemma}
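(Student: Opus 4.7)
The forward direction is immediate: if $(a_n)_{n\in \N}\in c_0$, then for each $X\subseteq\N$ the sequence $(a_n 1_X(n))_{n\in \N}$ is obtained by zeroing out coordinates outside $X$, so it still tends to $0$. No use of wideness is needed here.

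For the reverse direction, I would argue by contraposition. Assume $(a_n)_{n\in\N}\in\ell_\infty$ is not in $c_0$. Then there exist $\varepsilon>0$ and an infinite set $X'\subseteq\N$ such that $|a_n|>\varepsilon$ for every $n\in X'$ (pick the indices witnessing $\limsup |a_n|>0$). Applying the definition of wideness to $X'$, choose $X\in\X$ with $X\cap X'$ infinite. Then on the infinite set $X\cap X'$ we have $|a_n 1_X(n)|=|a_n|>\varepsilon$, so $(a_n 1_X(n))_{n\in\N}\notin c_0$, which contradicts the hypothesis.

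There is essentially no obstacle: the entire content of the reverse direction is extracting an infinite set of indices on which $(a_n)$ stays bounded away from zero, and then invoking Definition \ref{def-wide} once. The lemma is really just a restatement of the wideness condition in the language of sequences, and the proof is a one-paragraph argument of the form just described.
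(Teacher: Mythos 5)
Your proposal is correct and is essentially identical to the paper's proof: the paper also argues the nontrivial direction by contraposition, extracting an infinite $X'$ on which $|a_n|>\varepsilon$ and applying wideness to get $X\in\X$ with $X\cap X'$ infinite (and it likewise treats the forward direction as immediate).
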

 \begin{proof}
 If $(a_n)_{n\in \N}\not\in c_0$, then there is $\varepsilon>0$ and an infinite $X'\subseteq \N$ such that
 $|a_n|>\varepsilon$ for all $n\in X'$. Since $\X$ is wide, there is $X\in \X$ such that $X\cap X'$ is infinite.
 Then $(a_n1_X(n))_{n\in \N}\not\in c_0$. 
 \end{proof}
 
  \begin{lemma}\label{M-comm} Suppose that $\X$ is a family of subsets of $\N$ and 
  $\mathfrak A=(\alpha_X: X\in \X)$ with $\alpha_X\in (71/72, 1]$
  for all $X\in \X$.
  If $\X$ is wide, then $\M_{\X, \mathfrak A}$ is almost commutative.
 \end{lemma}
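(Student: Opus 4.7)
The plan is to show that for any $T, T' \in \M_{\X, \mathfrak{A}}$ the commutator $[T, T']$ lies in $\K$, by reducing the question to the asymptotic behavior of the blocks $T_n, T'_n \in \B_n$ and then exploiting wideness.

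First I would observe that $\M_{\X, \mathfrak{A}} \subseteq \A_0$ is closed under multiplication and involution (by Lemma \ref{containsD0}), so $[T, T'] \in \A_0$, and hence by Lemma \ref{limit-compact} it suffices to prove that $\|[T_n, T'_n]\| \to 0$ as $n \to \infty$. Suppose toward contradiction that this fails: there is $\varepsilon > 0$ and an infinite $X' \subseteq \N$ with $\|[T_n, T'_n]\| > \varepsilon$ for every $n \in X'$. By the wideness of $\X$ (Definition \ref{def-wide}) there is some $X \in \X$ such that $X \cap X'$ is infinite. This $X$ is the one I will exploit.

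By the defining property of $\M_{\X, \mathfrak{A}}$, the compressions $P_X T P_X$ and $P_X T' P_X$ both lie in $\D_\kk(X, \alpha_X)$. Write
\[
P_X T P_X = D + R, \qquad P_X T' P_X = D' + R',
\]
with $D, D' \in \D(X, \alpha_X)$ and $R, R' \in \K \cap \A_0(X)$. Since for $n \in X$ the $n$-th block of $P_X T P_X$ is exactly $T_n$, we have $T_n = D_n + R_n$ and $T'_n = D'_n + R'_n$ for all $n \in X$, where $D_n, D'_n$ are both diagonal with respect to the common orthonormal basis $\F_{\alpha_X, n}$ of $\B_n$. In particular $[D_n, D'_n] = 0$, because two operators simultaneously diagonalized in a two-dimensional space commute.

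Expanding the commutator for $n \in X$ I get
\[
[T_n, T'_n] = [D_n, R'_n] + [R_n, D'_n] + [R_n, R'_n],
\]
and since $\|D_n\|, \|D'_n\|$ are bounded uniformly (by $\|T\| + \|R\|$ and $\|T'\|+\|R'\|$ respectively) while $\|R_n\|, \|R'_n\| \to 0$ as $n \to \infty$ in $X$ by Lemma \ref{limit-compact}, it follows that $\|[T_n, T'_n]\| \to 0$ as $n \to \infty$ within $X$. This contradicts the fact that $\|[T_n, T'_n]\| > \varepsilon$ on the infinite set $X \cap X' \subseteq X$, completing the proof. The only nontrivial point is really the reduction to the blocks together with the algebraic identity that makes the diagonal parts drop out; everything else is bookkeeping from Lemma \ref{limit-compact} and the wideness hypothesis.
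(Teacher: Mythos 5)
Your proof is correct and follows essentially the same route as the paper: decompose the blocks over each $X\in\X$ as diagonal-plus-compact, note that the diagonal parts commute so the commutator blocks are controlled by the vanishing compact parts, and invoke wideness to pass from convergence along each $X$ to global convergence. The only difference is that you inline the wideness step as a proof by contradiction, whereas the paper isolates it as Lemma \ref{wide} (proved by the same contrapositive argument) and applies it directly.
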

 \begin{proof}

 Suppose that $T, S\in \M_{\X, \mathfrak A}$. By the definition of $\M_{\X, \mathfrak A}$ for every $X\in \X$
 there are $T(X), S(X)\in \D(X, \alpha_X)$ and $K(X), L(X)\in \K\cap \A_0(X)$
 such that  $T_n=T(X)_n+K(X)_n$ and $S_n=S(X)_n+L(X)_n$ for all $n\in X$.
 Then
 $$[T_n, S_n]=[T(X)_n+K(X)_n, S(X)_n+L(X)_n]=$$
 $$[K(X)_n, L(X)_n]+[K(X)_n, S(X)_n]+ [T(X)_n, L(X)_n] $$ for every $n\in X$ 
 because $[T(X)_n, S(X)_n]=0$ since $\D(n, \alpha_X(n))$  is commutative.
 This means by Lemma \ref{limit-compact} that  $([T_n, S_n]1_X(n))_{n\in \N}\in c_0$ since
 $(K(X)_n)_{n\in \N}$ and $(L(X)_n)_{n\in \N}\in c_0$ .
 Since $X\in\X$ was arbitrary, Lemma \ref{wide} implies that $([T_n, S_n])_{n\in \N}\in c_0$,
 which, by Lemma \ref{limit-compact} results in $[T, S]$ being compact, as required.

 \end{proof}
 
 \begin{definition}\label{def-coherent} Suppose that $\X$ is a family of subsets of $\N$. 
 A system 
 $$\mathfrak A=(\alpha_X: X\in \X)$$ 
 is called coherent if $\alpha_X\in (71/72, 1]$ for all $X\in \X$ and 
 for every $X, Y\in \X$ with $X\cap Y$ infinite we have $\alpha_X=\alpha_Y$.
 \end{definition}
 
 \begin{lemma}\label{Ds-inM}
  Suppose that $\X$ is a family of subsets of $\N$ and 
  $\mathfrak A=(\alpha_X: X\in \X)$ is coherent. 
  Then $D_\kk(X, \alpha_X)\subseteq \M_{\X, \mathfrak A}$ for every $X\in \X$.
 \end{lemma}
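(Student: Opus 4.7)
The plan is to fix $X\in\X$ and $T\in\D_\kk(X,\alpha_X)$, and show that for every $Y\in\X$ we have $P_YTP_Y\in\D_\kk(Y,\alpha_Y)$. Since $T\in\A_0(X)\subseteq\A_0$, the operator $P_YTP_Y$ vanishes on all blocks $\B_n$ for $n\notin X\cap Y$, so the only issue is to exhibit the required decomposition on $X\cap Y$. I would split into two cases depending on the size of $X\cap Y$.

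\textbf{Finite case.} If $X\cap Y$ is finite, then $P_YTP_Y$ is supported on finitely many blocks $\B_n$, hence is finite-dimensional and in particular compact. It is also in $\A_0(Y)$ by construction, so it lies in $\K\cap\A_0(Y)$, and the decomposition $P_YTP_Y=0+P_YTP_Y$ with $0\in\D(Y,\alpha_Y)$ and $P_YTP_Y\in\K\cap\A_0(Y)$ witnesses membership in $\D_\kk(Y,\alpha_Y)$.

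\textbf{Infinite case.} If $X\cap Y$ is infinite, coherence of $\mathfrak A$ gives $\alpha_X=\alpha_Y$; call this common value $\alpha$. Writing $T=S+R$ as in Definition \ref{def-Ds}(6) with $S\in\D(X,\alpha)$ and $R\in\K\cap\A_0(X)$, I get
\[
P_YTP_Y=P_YSP_Y+P_YRP_Y.
\]
For the first summand, $P_YSP_Y$ is supported on $X\cap Y\subseteq Y$, and for each $n\in X\cap Y$ its $n$-th block equals $S_n\in\D_{n,\alpha}=\D_{n,\alpha_Y}$; so $P_YSP_Y\in\D(Y,\alpha_Y)$. For the second summand, $P_YRP_Y$ is compact (being a compression of a compact operator) and it belongs to $\A_0(Y)$ since $P_Y(\cdot)P_Y$ maps into $\A_0(Y)$. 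Hence $P_YTP_Y\in\D_\kk(Y,\alpha_Y)$, as required.

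There is no real obstacle here: the lemma is essentially a bookkeeping check verifying that the definition of coherence has been set up precisely to make restriction to a coherent block $Y$ preserve the $(\alpha_Y)$-diagonal-mod-compact form. The only place hypotheses are genuinely used is in the infinite-intersection case, where without coherence the block structures would be diagonal with respect to two different orthonormal bases of $\B_n$ and the desired containment would fail.
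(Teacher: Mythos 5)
Your proof is correct and follows essentially the same route as the paper's: split on whether $X\cap Y$ is finite (where the compression is finite-dimensional, hence compact) or infinite (where coherence forces $\alpha_X=\alpha_Y$, so the diagonal part compresses to an element of $\D(Y,\alpha_Y)$ and the compact part stays compact). No gaps.
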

 \begin{proof}
 Let $T\in D_\kk(X, \alpha_X)$ and $Y\in \X$. Let $T=T'+S$, where $T'\in D(X, \alpha_X)$
 and $S\in \K\cap A_0(X)$. We need to prove that
 $P_YTP_Y\in D_\kk(Y, \alpha_Y)$. By Lemma \ref{limit-compact} it
  is enough to prove that $P_YT'P_Y\in D_\kk(Y, \alpha_Y)$.
 The first case is of finite $X\cap Y$. Then $P_YTP_Y$ is nonzero only on
 finitely many coordinates and so compact and hence belonging to $D_\kk(Y, \alpha_Y)$.
 The second case is of infinite $X\cap Y$. 
 Then  $\alpha_X=\alpha_Y$ by Definition \ref{def-coherent},
 so $(P_YT'P_Y)_n$ for $n\in \N$ is either
 zero or diagonal with respect to $\F_{\alpha_X, n}=\F_{\alpha_Y, n}$ and so in  $D_\kk(Y, \alpha_Y)$
 as required.
 \end{proof}
 
   \begin{lemma}\label{almost-masa}  Suppose that $\X$ is a wide family of subsets of $\N$ and 
  $\mathfrak A=(\alpha_X: X\in \X)$ is coherent. 
  $\M_{\X, \mathfrak A}$ is almost masa of $\A_0$.
 \end{lemma}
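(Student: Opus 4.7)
The plan is to combine the almost commutativity already established in Lemma \ref{M-comm} with a maximality argument that reduces to the almost masa property of each $\D_\kk(X,\alpha_X)$ inside $\A_0(X)$ provided by Lemma \ref{dx}. Concretely, Lemma \ref{containsD0} tells us that $\M_{\X,\mathfrak{A}}$ is a C*-subalgebra of $\A_0$, and Lemma \ref{M-comm} (using that $\X$ is wide) gives that it is almost commutative; what remains is to verify maximality among almost commutative C*-subalgebras of $\A_0$.

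Suppose, toward contradiction, that $\B \supsetneq \M_{\X,\mathfrak{A}}$ is a strictly larger almost commutative C*-subalgebra of $\A_0$ and pick $T \in \B \setminus \M_{\X,\mathfrak{A}}$. By Definition \ref{def-M}, there exists some $X \in \X$ with $P_X T P_X \notin \D_\kk(X,\alpha_X)$. By Lemma \ref{Ds-inM} (here coherence of $\mathfrak{A}$ is essential), $\D_\kk(X,\alpha_X) \subseteq \M_{\X,\mathfrak{A}} \subseteq \B$, so $T$ almost commutes with every element of $\D_\kk(X,\alpha_X)$. Since any $S \in \D_\kk(X,\alpha_X) \subseteq \A_0(X)$ satisfies $P_X S = S = S P_X$, a direct calculation gives
\[
[P_X T P_X, S] \;=\; P_X T S - S T P_X \;=\; P_X(TS - ST)P_X \;=\; P_X[T,S]P_X \in \K,
\]
and taking adjoints shows that $(P_X T P_X)^*$ also almost commutes with all of $\D_\kk(X,\alpha_X)$. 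Therefore both the self-adjoint part $V_1 = (P_X T P_X + (P_X T P_X)^*)/2$ and the imaginary part $V_2 = (P_X T P_X - (P_X T P_X)^*)/(2i)$ lie in $\A_0(X)$ and almost commute with every element of $\D_\kk(X,\alpha_X)$.

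Consider the C*-subalgebra $\B' = C^*(\D_\kk(X,\alpha_X) \cup \{V_1, V_2\})$ of $\A_0(X)$. It is almost commutative: using the Leibniz identity $[AB,C] = A[B,C] + [A,C]B$ and the fact that $\K$ is a norm-closed ideal, polynomials in elements of $\D_\kk(X,\alpha_X) \cup \{V_1,V_2\}$ pairwise have compact commutators, and this passes to norm closures. By Lemma \ref{dx}, $\D_\kk(X,\alpha_X)$ is maximal among such subalgebras of $\A_0(X)$, forcing $V_1, V_2 \in \D_\kk(X,\alpha_X)$ and hence $P_X T P_X = V_1 + i V_2 \in \D_\kk(X,\alpha_X)$, contradicting the choice of $X$.

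The main technical point to formalize is the passage from pairwise almost commutativity of generators to almost commutativity of the C*-algebra they generate; this is routine via the Leibniz identity and closedness of $\K$, but it is the step that really uses that $\K$ is a two-sided ideal. Everything else reduces the global maximality question on $\A_0$ to the single-block maximality statements already packaged in Lemmas \ref{dx} and \ref{Ds-inM}, with the bridge provided by the identity $[P_X T P_X, S] = P_X[T,S]P_X$ for $S \in \A_0(X)$.
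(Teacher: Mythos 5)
Your proposal is correct and follows essentially the same route as the paper: establish almost commutativity via Lemma \ref{M-comm}, then reduce maximality to the single-block statement of Lemma \ref{dx} by compressing a hypothetical new element $T$ with $P_X$ (which lies in $\D_0\subseteq\M_{\X,\mathfrak A}$ by Lemmas \ref{containsD0} and \ref{Ds-inM}). Your extra care in passing to the self-adjoint and anti-self-adjoint parts and verifying, via the Leibniz identity and closedness of $\K$, that the generated C*-algebra is almost commutative just makes explicit a step the paper leaves implicit when it invokes the maximality of $\D_\kk(X,\alpha_X)$.
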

 \begin{proof}  $\M_{\X, \mathfrak A}$ is a C*-subalgebra of $\A_0$ by Lemma \ref{containsD0}. 
 All pairs of elements of $\M_{\X, \mathfrak A}$ almost commute by Lemma \ref{M-comm}.
 Suppose that $\M_{\X, \mathfrak A}$ is not an almost masa of $\A_0$ and let us aim at a contradiction. 
 By Lemma \ref{sa-a.masa} this hypothesis means that there is an almost commutative C*-subalgebra $\mathcal N$ of $\A_0$ 
 which properly includes $\M_{\X, \mathfrak A}$. Let $T\in \mathcal N\setminus \M_{\X, \mathfrak A}$. 
 Then $T^*\in \mathcal N\setminus \M_{\X, \mathfrak A}$
 and both $T$ and $T^*$ almost commute with each other and  with the entire $\M_{\X, \mathfrak A}$.
 
 As  $\D_0, D_\kk(X, \alpha_X)\subseteq \M_{\X, \mathfrak A}$ by Lemmas \ref{containsD0} and \ref{Ds-inM}, and
 $P_X\in \D_0$ for all $X\in \X$, we conclude that, in particular
 $T$, $T^*$ and so $P_XTP_X$,  $P_XT^*P_X=(P_XTP_X)^*$ almost commute with each other and
 with all elements of  
 $D_\kk(X, \alpha_X)$ for any $X\in \X$.
 By Lemma \ref{dx} the algebra $D_\kk(X, \alpha_X)$  is almost masa of
 $\A_0(X)$ for every $X\in \X$ and so $P_XTP_X\in D_\kk(X, \alpha_X)$ for every $X\in \X$,
 hence $T\in \M_{\X, \mathfrak A}$ contradicting the choice of $T$.
 \end{proof}

 The following two lemmas will prove that almost masas  $\M_{\X, \mathfrak A}$
 are generated by projections if $\X$ is wide and $\mathfrak A$ is coherent.

 \begin{lemma}\label{splitting} Suppose that $X\subseteq \N$  and $S=(S_n)_{n\in\N}, T=(T_n)_{n\in\N}$
 are in $\A_0(X)$ and $\theta>0$. Assume that $T_n=a_nQ_n+b_nQ_n^\perp$ as well as $S_n=a_n'F_n+b_n'F_n^\perp$
 and for every $n\in X$ we have:
 \begin{enumerate}
 \item $Q_n, F_n$ are  one-dimensional  orthogonal projections in $\B_n$,
 \item $\lim_{n\in X}\|S_n-T_n\|=0$,
 \item $|a_n-b_n|>\theta$,
 \item $\|Q_n-F_n\|<5/6$ or $\|Q_n-F_n^\perp\|<5/6$.
 \end{enumerate}
 Then $\lim_{n\in X}\|Q_n-R_n\|=0$, where
 $$ R_n=
  \begin{cases}
    F_n & \text{if $\|Q_n-F_n\|<5/6$,} \\
    F_n^\perp & \text{otherwise.}
  \end{cases}
 $$
 \end{lemma}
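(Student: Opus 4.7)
The plan is to convert the hypothesis $\lim_{n \in X} \|S_n - T_n\| = 0$ into an estimate on $\|[Q_n, F_n]\|$ by exploiting the fact that $F_n$ is a spectral projection of $S_n$, and then to apply two elementary two-dimensional identities to recover closeness of the projections themselves.

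First, since $F_n$ and $F_n^\perp$ are the spectral projections of $S_n$, we have $[S_n, F_n] = 0$, so $[T_n, F_n] = [T_n - S_n, F_n]$, whose norm is at most $2\|T_n - S_n\|$ and therefore tends to $0$ as $n \to \infty$ in $X$ by hypothesis (2). On the other hand, writing $T_n = (a_n - b_n) Q_n + b_n I_n$ and using $[I_n, F_n] = 0$, we obtain $[T_n, F_n] = (a_n - b_n)[Q_n, F_n]$; combined with $|a_n - b_n| > \theta$ from (3), this yields $\lim_{n \in X} \|[Q_n, F_n]\| = 0$.

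Next, I would verify two elementary identities for any pair of one-dimensional projections $Q, F$ in a two-dimensional Hilbert space. Writing $c = |\langle q, f\rangle|$ for unit vectors $q, f$ in the respective ranges, a direct matrix computation in the basis $\{q, q^\perp\}$ gives $\|Q - F\| = \sqrt{1 - c^2}$, $\|Q - F^\perp\| = c$, and $\|[Q, F]\| = c\sqrt{1 - c^2}$, so
$$\|[Q, F]\| = \|Q - F\| \cdot \|Q - F^\perp\| \quad \text{and} \quad \|Q - F\|^2 + \|Q - F^\perp\|^2 = 1.$$

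Finally, I combine these with hypothesis (4). If $\|Q_n - F_n\| < 5/6$, the sum-of-squares identity gives $\|Q_n - F_n^\perp\|^2 > 1 - 25/36 = 11/36$, hence $\|Q_n - F_n^\perp\| > 1/2$; then the product identity together with $\|[Q_n, F_n]\| \to 0$ forces $\|Q_n - F_n\| \to 0$. A symmetric argument handles the case $\|Q_n - F_n^\perp\| < 5/6$. This matches the definition of $R_n$ and gives $\lim_{n \in X} \|Q_n - R_n\| = 0$. I expect the main (modest) technical effort to go into the two-dimensional identities above; they are entirely elementary, and no substantial obstacle is anticipated.
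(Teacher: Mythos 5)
Your proof is correct, but it takes a genuinely different route from the paper's. You reduce everything to the commutator: from $[S_n,F_n]=0$ and $T_n=(a_n-b_n)Q_n+b_nI_n$ you get $\|[Q_n,F_n]\|\le 2\|T_n-S_n\|/|a_n-b_n|\le 2\|T_n-S_n\|/\theta$, and then the exact two-dimensional identities $\|[Q,F]\|=\|Q-F\|\cdot\|Q-F^\perp\|$ and $\|Q-F\|^2+\|Q-F^\perp\|^2=1$ (both easily checked in an orthonormal basis adapted to $Q$, after normalizing the phase of the overlap) convert hypothesis (4) into the lower bound $\|Q_n-R_n^\perp\|>1/2$ and hence into $\|Q_n-R_n\|\le 2\|[Q_n,F_n]\|$. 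The paper instead never passes through commutators: it splits $X$ according to which clause of (4) holds, multiplies $T_n-S_n$ on the left by $Q_n$ (or $Q_n^\perp$) and on the right by $F_n$ (or $F_n^\perp$) to show that the eigenvalues match up asymptotically ($a_n-a_n'\to 0$, $b_n-b_n'\to 0$ on one piece, with the roles of $a_n'$ and $b_n'$ swapped on the other, using $\|Q_nF_n\|>1/6$ from (4)), and then isolates $(a_n-b_n)(Q_n-F_n)$ in the algebraic expansion of $S_n-T_n$ before dividing by $a_n-b_n$ via (3). Your argument is shorter and yields the clean quantitative bound $\|Q_n-R_n\|\le 4\|S_n-T_n\|/\theta$, at the cost of verifying the two elementary $2\times 2$ identities; the paper's argument is more computational but stays entirely at the level of norm estimates on products, and as a by-product records that the eigenvalue pairs of $S_n$ and $T_n$ converge to each other, which is in the spirit of how the lemma is applied later. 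Both arguments use (3) and (4) at exactly the analogous points, so the difference is one of mechanism rather than of logical content.
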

 \begin{proof}

 Put $Y=\{n\in X: \|Q_n-F_n\|< 5/6\}$ and $Z=
 \{n\in X: \|Q_n-F_n^\perp\|< 5/6\}$. We have $X=Y\cup Z$ by  (4).
 By multiplying $T_n-S_n=a_nQ_n+b_nQ_n^\perp-a_n'F_n-b_n'F_n^\perp$ by $Q_n$ from the left and by $F_n$ from the right
 we obtain that $(a_n-a_n')Q_nF_n$'s converge to zero. However
 $$1=\|Q_n\|=\|Q_nF_n+Q_n(I_n-F_n)\|\leq \|Q_nF_n\|+\|Q_n(Q_n-F_n)\|\leq $$
 $$\leq\|Q_nF_n\|+\|Q_n\|\|Q_n-F_n\|=\|Q_nF_n\|+\|Q_n-F_n\|,$$
 so for $n\in Y$ we have $\|Q_nF_n\|\geq1/6$ by the definition of $Y$  and so 
 $$\lim_{n\in Y}(a_n-a_n')=0.\leqno (*)$$
  As $Q_n-F_n=F_n^\perp-Q_n^\perp$, similar arguments involving multiplication of $T_n-S_n$ from the left by $Q_n^\perp$
  and from the right by $F_n^\perp$ show that
$$\lim_{n\in Y}(b_n-b_n')=0.\leqno (**)$$

 Now writing $Q_n^\perp, F_n^\perp$ as $I_n-Q_n$, $I_n-F_n$ respectively
 we have 
 $$S_n-T_n=(a_n-b_n)Q_n-(a_n'-b_n')F_n+(b_n-b_n')I_n=$$
 $$=(a_n-b_n)(Q_n-F_n)- [(a_n'-b_n')-(a_n-b_n)]F_n+(b_n-b_n')I_n.$$
 So since $\|S_n-T_n\|$s   converge to zero for $n\in X$, by (2) and (*) and (**)
 we conclude that $Q_n-R_n=Q_n-F_n$'s converge to zero for $n\in Y$.
 
 The case of $n\in Z$ is similar:  By multiplying 
 $T_n-S_n=a_nQ_n+b_nQ_n^\perp-a_n'F_n-b_n'F_n^\perp$ by $Q_n$ from the left and $F_n^\perp$ from the right
 we obtain that $(a_n-b_n')Q_nF_n^\perp$ converge to zero. As before for $F_n$ we calculate that
 $1=\|Q_n\|\leq\|Q_nF_n^\perp\|+\|Q_n-F_n^\perp\|$ and so 
  by the definition of $Z$ we have that 
 $$\lim_{n\in Z}(a_n-b_n')=0.\leqno(+)$$
  As $Q_n-F_n^\perp=F_n-Q_n^\perp$, similar arguments  involving multiplication from the left by $Q_n^\perp$
  and from the right by $F_n$ show that
$$\lim_{n\in Z}(b_n-a_n')=0.\leqno(++)$$
Now writing $Q_n^\perp, F_n$ as $I_n-Q_n$, $I_n-F_n^\perp$ respectively
 we have 
 $$S_n-T_n=(a_n-b_n)Q_n-(b_n'-a_n')F_n^\perp+(b_n-a_n')I_n=$$
 $$=(a_n-b_n)(Q_n-F_n^\perp)- [(b_n'-a_n')-(a_n-b_n)]F_n^\perp+(b_n-a_n')I_n.$$
 So since $\|S_n-T_n\|$s   converge to zero for $n\in Z$, by (2) and (+) and (++)
 we conclude that $Q_n-R_n=Q_n-F_n^\perp$ converge to zero for $n\in Z$ as well.
 
 \end{proof}
 
   \begin{lemma}\label{gen-projections}   Suppose that $\X$ is a wide family of subsets of $\N$ and 
  $\mathfrak A=(\alpha_X: X\in \X)$ is coherent.  Then $\M_{\X, \mathfrak A}$ is generated by projections.
 \end{lemma}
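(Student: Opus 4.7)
The plan is to show that every element of $\M_{\X, \mathfrak A}$ lies in the norm-closed linear span of its projections. Splitting $T \in \M_{\X, \mathfrak A}$ into real and imaginary parts reduces the task to $T = T^*$. For each $n$, since $T_n$ is self-adjoint on the two-dimensional Hilbert space $\ell_2(\{e_{2n}, e_{2n+1}\})$, it admits a spectral decomposition $T_n = a_n Q_n + b_n Q_n^\perp$ with $a_n \leq b_n$ real and $Q_n, Q_n^\perp$ complementary (rank-one or trivial) projections in $\BB_n$.

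Fix $\varepsilon > 0$, choose $\delta > 0$ small enough that $3\delta/2 < \varepsilon$, and take a finite $\delta$-grid $G \subseteq [-\|T\|, \|T\|]$. Define an approximant $\tilde T = (\tilde T_n)_n$ by a case split: if $b_n - a_n > 2\delta$ put $\tilde T_n = \tilde a_n Q_n + \tilde b_n Q_n^\perp$, with $\tilde a_n, \tilde b_n \in G$ the nearest grid points to $a_n, b_n$; otherwise put $\tilde T_n = \tilde c_n I_n$, with $\tilde c_n \in G$ nearest to $(a_n + b_n)/2$. A block-wise estimate gives $\|T - \tilde T\| \leq 3\delta/2 < \varepsilon$. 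Partition $\N$ according to the finitely many outputs: $C_c = \{n : b_n - a_n \leq 2\delta,\ \tilde c_n = c\}$ for $c \in G$, and $D_{a, b} = \{n : b_n - a_n > 2\delta,\ \tilde a_n = a,\ \tilde b_n = b\}$ for $a < b$ in $G$. Let $P_{C_c} \in \D_0$ be the canonical projection on $C_c$, and for $a < b$ let $P^1_{a, b}, P^2_{a, b} \in \A_0$ be the operators whose $n$-th blocks equal $Q_n, Q_n^\perp$ on $D_{a, b}$ and vanish elsewhere. Then $\tilde T = \sum_{c \in G} c P_{C_c} + \sum_{a < b \in G} (a P^1_{a, b} + b P^2_{a, b})$ is a finite real linear combination of projections of $\A_0$, so it remains to verify that each of these projections lies in $\M_{\X, \mathfrak A}$.

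For $P_{C_c} \in \D_0 \subseteq \M_{\X, \mathfrak A}$ this is immediate. For $P^1_{a, b}$ (and by symmetry $P^2_{a, b}$) fix $X \in \X$: the hypothesis $T \in \M_{\X, \mathfrak A}$ provides a decomposition $T|_X = D_X + K_X$ with $D_X \in \D(X, \alpha_X)$ (taken self-adjoint after symmetrization, so $(D_X)_n = a'_n F_{\alpha_X, n} + b'_n F_{\alpha_X, n}^\perp$ with real $a'_n, b'_n$) and $K_X$ compact. On the subset $X \cap \{n : b_n - a_n > 2\delta\}$ all hypotheses of Lemma \ref{splitting} are met with $\theta = 2\delta$; hypothesis (4) is automatic since for any rank-one projections $Q, F$ in a two-dimensional Hilbert space one computes $\|Q - F\|^2 + \|Q - F^\perp\|^2 = 1$, so $\min(\|Q - F\|, \|Q - F^\perp\|) \leq 1/\sqrt{2} < 5/6$. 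The lemma then yields $R_n \in \{F_{\alpha_X, n}, F_{\alpha_X, n}^\perp\}$ with $\|Q_n - R_n\| \to 0$ on that set. The block-diagonal operator with $n$-th block $R_n$ on $X \cap D_{a, b}$ and $0$ elsewhere lies in $\D(X, \alpha_X)$ and differs from $P^1_{a, b}|_X$ by an operator whose block norms tend to zero, hence compact by Lemma \ref{limit-compact}; so $P^1_{a, b}|_X \in \D_\kk(X, \alpha_X)$, as required.

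The main obstacle is the failure of Lemma \ref{splitting} on blocks where $a_n, b_n$ are close, since there the spectral projections $Q_n$ are not asymptotically determined by the diagonal structure $\F_{\alpha_X, n}$. The case split in the definition of $\tilde T$ sidesteps this by replacing such blocks with scalar multiples of $I_n$, sending them into $\D_0$ where no spectral information about $T$ is needed; with this adjustment, Lemma \ref{splitting} carries out the remaining work uniformly across all $X \in \X$.
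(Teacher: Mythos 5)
Your proof is correct and follows the same overall strategy as the paper's: decompose each two-by-two block spectrally as $T_n=a_nQ_n+b_nQ_n^\perp$, approximate the eigenvalues by a finite grid with a case split on the size of the gap $|a_n-b_n|$ (small-gap blocks become scalar multiples of $I_n$ and land in $\D_0$), and invoke Lemma \ref{splitting} to check that the resulting spectral projections belong to $\M_{\X, \mathfrak A}$. The genuine difference lies in how hypothesis (4) of Lemma \ref{splitting} is secured. The paper isolates the set $Z$ of blocks on which $Q_n$ is at distance at least $1/2$ from both $E_n$ and $E_n^\perp$, proves that $Z$ is finite by a compactness argument combining Lemma \ref{71/72} with the wideness of $\X$, and absorbs the excluded blocks into a compact remainder $L$. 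You instead observe that for rank-one projections $Q, F$ on a two-dimensional space one has $\|Q-F\|^2+\|Q-F^\perp\|^2=1$ (each difference is a traceless self-adjoint operator, so $\|Q-F\|^2=-\det(Q-F)=1-|\langle u,v\rangle|^2$ for unit vectors $u,v$ spanning the ranges), whence $\min(\|Q-F\|,\|Q-F^\perp\|)\leq 1/\sqrt{2}<5/6$ and hypothesis (4) holds automatically. This is a real simplification: it eliminates the set $Z$, the compact correction term, and the entire finiteness argument, which is the only place where the paper's proof of this lemma uses the wideness of $\X$ and Lemma \ref{71/72}; in effect your argument establishes the conclusion for an arbitrary family $\X$ and arbitrary $\mathfrak A$ with values in $(71/72,1]$. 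The remaining steps — the $3\delta/2$ estimate, membership of the scalar blocks via Lemma \ref{containsD0}, and the passage from $\lim_{n}\|Q_n-R_n\|=0$ to $P_XP^1_{a,b}P_X\in\D_\kk(X,\alpha_X)$ via Lemma \ref{limit-compact} — all check out.
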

 \begin{proof}
 Let $T=(T_n)_{n\in \N}\in \M_{\X, \mathfrak A}$ be self-adjoint, $\|T\|\leq M\in \R_{+}$ 
 and for $n\in \N$ let $\{a_n, b_n\}$ be the spectrum of $T_n$
 and $\{v_n, w_n\}$  be an orthonormal  basis of eigenvectors  corresponding to
 eigenvalues $a_n, b_n$. So possibly $a_n=b_n$
 or even $a_n=b_n=0$.  Let $Q_n$, $Q_n^\perp$ be orthogonal projections in $\ell_2(e_{2n}, e_{2n+1})$ 
 onto $v_n$ and $w_n$ respectively. So for each $n\in \N$ we have
 $$T_n=a_nQ_n+b_nQ_n^\perp.\leqno (*)$$
 Fix $\varepsilon>0$  with $\varepsilon<1/4$. 
 We will find finitely many  projections $P(m)\in \M_{\X, \mathfrak A}$ and constants $c_m\in \C$ for $m<3l\in \N$ 
 such that $\|T-\sum_{m<3l}c_mP{(m)}+L\|<\varepsilon$, where $L\in \K\cap \M_{\X, \mathfrak A}$ is
 self-adjoint.
 This is enough since any $T\in \M_{\X, \mathfrak A}$ is a linear combination of self-adjoint elements
 and  any compact self-adjoint $L\in \M_{\X, \mathfrak A}$ can be approximated by a finite linear combination of
 one dimensional projections in $\M_{\X, \mathfrak A}$ by Lemma \ref{limit-compact}.
 
 Since  $\|T\|\leq M$ and $T$ is self-adjoint, we have $a_n, b_n\in [-M, M]$. Let $\{J_m: m<l\}$ for some $l\in\N$
 be a partition of the interval $[-M, M]$ into sets of diameter smaller than $\varepsilon/2$
 and let $c_m, c_{m+l}, c_{m+2l}\in J_m$ for all $m<l$.
 
 For $m<l$ consider the following sets:
 \begin{itemize}
 \item $X_{m}=\{n\in \N: |a_n-b_n|<\varepsilon;\  {a_n+b_n\over 2}\in J_m\},$
 \item $Z=\{n\in \N\setminus \bigcup_{m< l}X_{m}: \|E_n-Q_n\|, \|E_n- Q_n^\perp\|\geq 1/2\},$
 \item $Y_{m+l}=\{n\in \N\setminus(Z\cup \bigcup_{m< l}X_{m}): a_n\in J_m\},$
 \item $Z_{m+2l}=\{n\in \N\setminus(Z\cup \bigcup_{m<l}X_{m}): b_n\in J_m\},$
 \end{itemize}
 
For  $m<l$ define projections $P(m)=(P(m)_n)_{n\in\N}$ by demanding
$$ P(m)_n=
  \begin{cases}
    I_n & \text{if $n\in X_m$} \\
    0 & \text{otherwise,}
  \end{cases}
 $$
 For  $l\leq m< 2l$
$$ P(m)_n=
  \begin{cases}
    Q_n & \text{if $n\in Y_{m}$} \\
    0 & \text{otherwise,}
  \end{cases}
 $$
 For  $2l\leq m< 3l$
$$ P(m)_n=
  \begin{cases}
    Q_n^\perp & \text{if $n\in Z_{m}$} \\
    0 & \text{otherwise,}
  \end{cases}
 $$
 Moreover define an operator $L=(L_n)_{n\in\N}$ by putting
 $$ L_n=
  \begin{cases}
    T_n & \text{if $n\in Z$} \\
    0 & \text{otherwise}
  \end{cases}
 $$

Let 
$$S=\sum_{m<3l}c_mP(m)+L.\leqno(**)$$
It is clear that $P(m)$s are projections and $L$ is self-adjoint as $T$ is. 
Now we show that $L$ is compact and in $\M_{\X, \mathfrak A}$. 

For this  it is enough to see that $Z$ is finite. Suppose that $Z$ is infinite and let us aim at a contradiction. 
$ T\in \M_{\X, \mathfrak A}$ and $P_Z\in \D_0\subseteq \M_{\X, \mathfrak A}$ by Lemma \ref{containsD0}, 
so we have $L=P_ZTP_Z\in \M_{\X, \mathfrak A}$. Since $\X$ is wide,
there is $X\in \X$ such that $Z\cap X$ is infinite. 
Now apply Lemma \ref{71/72}, where
clauses (2) and (3) fail for $Q=Q_n$ and $n\in Z\cap X$ by the definition of $Z$. So 
clause (1) holds that is 
$$\|F_{\alpha_X, n}-Q_n\|, \|F_{\alpha_X, n}-Q_n^\perp\|\geq 1/6\leqno(***)$$ for all
$n\in X\cap Z$.  Since $T\in \M_{\X, \mathfrak A}$ there is $R\in D(X, \alpha_X)$ and $L'\in \K\cap \A_0(X)$ such that
$$a_nQ_n+b_nQ_n^\perp=T_n=R_n+L_n'$$
 for $n\in X$.  Subtracting $b_nI_n$ from both sides we get 
 $$(a_n-b_n)Q_n=V_n+L_n',$$
 where $V_n=R_n-b_nI_n\in D(X, \alpha_X)$. 
 Let $U_n:\ell_2(\{e_{2n}, e_{2n+1}\})\rightarrow \ell_2(\{e_0, e_1\})$ be unitary
 such that $U_n(e_{2n+i})=e_i$ for $i\in \{0,1\}$.
 
 By the compactness
 of the unit ball in $\B_0$ and of $[-M, M]$ we may find an infinite
 $Z'\subseteq Z\cap X$ such that for $n\in Z'$ we have that $U_nQ_nU_n^*$s converge to
  $Q'$, $U_nV_nU_n^*$s converge to $V'$,
 $a_n$s converge to $a'$ and $b_n$s converge to $b'$. Note that
 by
 Lemma \ref{limit-compact} $U_nL_n'U_n^*$s converge to zero, so we have
 $(a'-b')Q'=V'$.  But $|a_n-b_n|\geq \varepsilon/2$ for $n\in Z'$ because it is disjoint from each $X_m$ for $m<l$
 and $Q_n$s are one-dimensional projections, so $Q'$ is a one dimensional projection. Also the element
 $V_n\in \D(X, \alpha_X)$, so $V'$ is diagonal with respect to 
 $\F_{\alpha_X, 0}$, so $Q'$ is one dimensional projection diagonal with respect to 
 $\F_{\alpha_X, 0}$.
 There are two such projections $F_{\alpha_X, 0}$ and $F_{\alpha_X, 0}^\perp$, but
 $U_nQ_nU_n^*$s converging to any of them contradicts (***). This completes the proof that
 $Z$ is finite and so $L$ is compact and $L\in \M_{\X, \mathfrak A}$.

Now we will show that $\|T-S\|\leq\varepsilon$.
We check it on each coordinate $n\in \N$.
If $n\in X_m$ for some $m<l$, then $n$ does not belong to any other set $Z, Y_{m'+l}, Z_{m'+2l}$ for $m'<l$. So 
the sum in (**) reduces to $(c_mP(m))_n=c_mI_n=c_m(Q_n+Q_n^\perp)$ and we have
$$\|T_n-S_n \|=\|a_nQ_n+b_nQ_n^\perp-c_m(Q_n+Q_n^\perp)\|\leq$$
$$\|a_nQ_n+b_nQ_n^\perp-((a_n+b_n)/2)(Q_n+Q_n^\perp)+((a_n+b_n)/2)-c_m)(Q_n+Q_n^\perp)\|\leq$$
$$\|((a_n-b_n)/2)Q_n+((b_n-a_n)/2)Q_n^\perp\|+\varepsilon/2\leq$$
$$\leq |a_n-b_n|/2+ \varepsilon/2\leq\varepsilon,$$
since $|a_n-b_n|\leq\varepsilon$, $(a_n+b_n)/2\in J_m$ for $n\in X_m$
 and the diameter of $J_m$ is less than $\varepsilon/2$.

If $n\in Z$, then $n$ does not belong to any of the sets $X_m$, $Y_{m+l}$, $Z_{m+l}$ for $l<m$
so the sum (**) reduces to $L_n=T_n$, so
$T_n-S_n=T_n-T_n=0$. 

If $n\not \in Z\cup \bigcup_{m< l}X_{m}$, then 
there are $l\leq m<2l, 2l\leq m'<3l$ such that $n\in Y_m\cap Z_{m'}$. Then $S_n=c_mQ_n+c_{m'}Q_n^\perp$,
so 
$$\|T_n-S_n\|=\|a_nQ_n+b_nQ_n^\perp-c_mQ_n+c_{m'}Q_n^\perp\|
\leq |a_n-c_m|+|b_n-c_{m'}|<\varepsilon/2+\varepsilon/2=
\varepsilon$$
because the diameters of $J_{m''}$s  for $m''<l$ are less than $\varepsilon/2$.

So we are left with proving that  all the projections $P(m)$ for $m<3l$ are in $\M_{\X, \mathfrak A}$.
 Let $X\in \X$. We need to show that $P_XP(m)P_X\in \D_\kk(X, \alpha_X)$.
 For $m<l$ it is clear because then $P(m)\in \D_0\subseteq \M_{\X, \mathfrak A}$
 by Lemma \ref{containsD0}. So consider $l\leq m<2l$.  For $n\in \N$ let
$$ R_n=
  \begin{cases}
    F_{\alpha_X, n} & \text{if $n\in Y_m\cap X$ and $\|E_n-Q_n\|<1/2$} \\
    F_{\alpha_X, n}^\perp & \text{if $n\in Y_m\cap X$ and $\|E_n-Q_n^\perp\|<1/2$} \\
    0 & \text{otherwise.}
  \end{cases}
 $$
 $R_n$ is well defined because $\|Q_n-Q_n^\perp\|=1$ by a functional calculus argument and so only one
 of the conditions $\|E_n-Q_n\|<1/2$ or $\|E_n-Q_n^\perp\|<1/2$ can be true for a given $n\in \N$.
 It is also  clear that $R=(R_n)_{n\in \N}\in\D(X, \alpha_X)$.
 It is enough to prove that $P_X(P(m)-R)P_X$ is compact and for this, 
 by Lemma \ref{limit-compact}  we need to show that 
 $((P(m)-R)_n)_{n\in X}\in c_0(X)$. 
 First, note that since $T\in \M_{\X, \mathfrak A}$, 
 there is  $S\in \D(X, \alpha_X)$ such that $\lim_{n\in X\cap Y_m}\|S_n-T_n\|=0$.
 By considering $(S+S^*)/2$ in place of $S$ and recalling that $T$ is self adjoint,
  we may assume that $S$ is self-adjoint. Since it is in 
 $\D(X, \alpha_X)$ we have $S_n=a_n'F_{\alpha_X, n}+b_n'F_{\alpha_X, n}^\perp$. 
 By its definition  $P(m)_n$ is equal to $Q_n$ for $n\in Y_m$ and otherwise $0$
 
Note that if $\|E_n-Q_n\|<1/2$, then by Lemma \ref{71/72} we have
$\|Q_n-F_{\alpha_X, n}\|\leq\|Q_n-E_n\|+\|E_n-F_{\alpha_X, n}\|<1/2+1/3=5/6$
and similarly if $\|E_n-Q_n^\perp\|<1/2$, then $\|Q_n-F_{\alpha_X, n}^\perp\|=\|F_{\alpha_X, n}-Q_n^\perp\|<5/6$.
Moreover note that one of these cases takes place for each $n\in X\cap Y_m$ because $Y_m\cap Z=\emptyset$.
So $(S_n)_{n\in X\cap Y_m}$, $(T_n)_{n\in X\cap Y_m}$, $(R_n)_{n\in X\cap Y_m}$ 
and $(Q_n)_{n\in X\cap Y_m}$ satisfy the hypothesis of Lemma \ref{splitting} with $\theta=\varepsilon/2$
since $Y_m\cap Z=\emptyset$.
So $\lim_{n\in X\cap Y_m}(R_n-Q_n)=0$ and hence $P_XP(m)P_X\in \D_\kk(X, \alpha_X)$
as required for $P(m)\in\M_{\X, \mathfrak A}$ since $X$ was an arbitrary element of $\X$.

The proof that $P(m)\in \M_{\X, \mathfrak A}$ for $2l\leq m<3l$ is analogous.
 \end{proof}
 
 \begin{proposition}\label{prop-zfc}
Suppose that $\X$ is a wide family of subsets of $\N$ and $\mathfrak A=(\alpha_X: X\in \X)$ is
coherent. 
Then $\pi[\M_{\X, \mathfrak A}]$ is a masa of $\QQ$ which is generated by projections. 
 \end{proposition}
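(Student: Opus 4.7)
The plan is essentially to assemble the lemmas that have already been established in this section. Under the hypotheses that $\X$ is wide and $\mathfrak A$ is coherent, Lemma \ref{almost-masa} shows that $\M_{\X, \mathfrak A}$ is an almost masa of $\A_0$, and Lemma \ref{containsD0} places it between $\D_0$ and $\A_0$. These are exactly the hypotheses of Lemma \ref{crucial}, which yields directly that $\pi[\M_{\X, \mathfrak A}]$ is a masa of $\QQ$.

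For the ``generated by projections'' part, I would invoke Lemma \ref{gen-projections}, which tells us that $\M_{\X, \mathfrak A}$ itself (as a C*-subalgebra of $\A_0\subseteq \bb$) is generated by projections. Since $\pi$ is a $*$-homomorphism, the images under $\pi$ of the generating projections of $\M_{\X, \mathfrak A}$ are projections of $\QQ$ whose $*$-algebraic span is norm-dense in $\pi[\M_{\X, \mathfrak A}]$ (using that $\pi$ is continuous and contractive); hence $\pi[\M_{\X, \mathfrak A}]$ is generated by projections as a C*-subalgebra of $\QQ$.

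Since the proof is a one-line citation of existing lemmas, there is essentially no obstacle; the only care needed is the trivial observation that images of generating projections under a $*$-homomorphism continue to generate the image. Nothing new needs to be proved here, and no additional set-theoretic hypothesis enters since Lemmas \ref{almost-masa}, \ref{crucial}, and \ref{gen-projections} all hold within \textsf{ZFC}.
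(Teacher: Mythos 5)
Your proposal is correct and matches the paper's own proof, which is exactly the citation of Lemmas \ref{crucial}, \ref{containsD0}, \ref{almost-masa}, and \ref{gen-projections}. The extra remark that images under the $*$-homomorphism $\pi$ of a generating set of projections still generate the image is a correct and harmless elaboration that the paper leaves implicit.
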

 \begin{proof} Apply Lemmas \ref{crucial}, \ref{containsD0}, \ref{almost-masa},  \ref{gen-projections}.
 \end{proof}
 
 In the reminder of this section we will look at particular cases of masas
 $\pi[\M_{\X, \mathfrak A}]$. First we consider $\X$ to be a maximal almost disjoint family of subsets
 of $\N$ because  $\pi[\M_{\X, \mathfrak A}]$ in this case  has rather simple Gelfand space 
 (Proposition \ref{few-projections} (3)).
 
 \begin{definition}\label{def-h-omega} A Boolean algebra $\A$ satisfies condition $H_\omega'$ whenever
 given $A_n, B_n\in \A$ satisfying
 $$A_n< A_{n+1}< \dots < B_{n+1}< B_n$$
 we can find $C\in \A$ satisfying $A_n< C<B_n$ for all $n\in \N$.
 \end{definition}
 
 \begin{lemma}\label{h-omega} If a Boolean algebra is complete or equal to $\wp(\N)/Fin$, then it satisfies
 condition $H_\omega'$
 \end{lemma}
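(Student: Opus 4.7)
The plan is to handle the two cases separately, since the hypothesis is a disjunction.

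For the complete Boolean algebra case, my candidate would be the obvious one: set $C=\bigvee_n A_n$, which exists by completeness. The inequality $A_n\leq C$ is trivial, and strictness $A_n<C$ is forced by $A_{n+1}\leq C$ together with $A_n<A_{n+1}$. For the upper bound, I would first observe that $A_k\leq B_m$ for all $k,m$: indeed, picking $N>\max(k,m)$ we have $A_k\leq A_N<B_N\leq B_m$ by combining the two monotonicity chains. This gives $C=\bigvee_k A_k\leq B_m$. Strictness $C<B_m$ is then immediate from $C\leq B_{m+1}<B_m$.

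For $\A=\wp(\N)/\mathrm{Fin}$, the supremum need not exist, so I would fix lifts $A_n',B_n'\subseteq\N$ of $A_n,B_n$ and build a representative $C'\subseteq\N$ of $C$ by a diagonal construction. Concretely, I would choose an increasing sequence $j_0<j_1<\dots$ such that for every $n\leq m$ we have
\[
A_n'\cap[j_m,\infty)\subseteq A_m'\quad\text{and}\quad A_m'\cap[j_m,\infty)\subseteq B_n',
\]
which is possible because each clause is a finite list of conditions all holding $\bmod\,\mathrm{Fin}$. Then I would set $C'=\bigcup_m\bigl(A_m'\cap[j_m,j_{m+1})\bigr)$. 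Splitting both $A_n'\cap[j_n,\infty)$ and $C'\cap[j_n,\infty)$ according to the intervals $[j_m,j_{m+1})$ for $m\geq n$ and using the two inclusions above gives $A_n'\subseteq^* C'\subseteq^* B_n'$. Strictness of $A_n<C$ then follows from $A_{n+1}\leq C$ together with $A_n<A_{n+1}$, and analogously $C<B_n$ follows from $C\leq B_{n+1}<B_n$.

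The genuinely delicate step is the second case: one needs to pick the diagonal thresholds $j_m$ carefully so that \emph{all} countably many almost-inclusions (both on the $A_n$ side and the $B_n$ side) can be enforced simultaneously by a single sequence. This is essentially the classical ``no $(\omega,\omega^\ast)$-gaps in $\wp(\N)/\mathrm{Fin}$'' argument, so I would expect it to go through in ZFC without any extra hypothesis. Everything else — the verification that $C'$ lies between the chains and the two strictness arguments — reduces to bookkeeping with the chosen intervals.
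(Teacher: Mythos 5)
Your proof is correct and follows essentially the same route as the paper: $C=\bigvee_n A_n$ in the complete case (with the same strictness argument via $A_n<A_{n+1}\leq C\leq B_{n+1}<B_n$), and interpolation of the two towers in $\wp(\N)/\mathrm{Fin}$. The only difference is that for the second case the paper simply cites Lemma 1.1.2 of van Mill's handbook (the $H_\omega$ property of $\wp(\N)/\mathrm{Fin}$, i.e., the standard ``no $(\omega,\omega^*)$-gaps'' fact), whereas you unpack that citation and carry out the usual diagonal construction explicitly; your choice of the thresholds $j_m$ and the verification that $A_n'\subseteq^* C'\subseteq^* B_n'$ both check out.
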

 \begin{proof} For a complete Boolean algebra consider $C=\sup_{n\in \N}A_n$ and
 for $\wp(\N)/Fin$ use Lemma 1.1.2 of \cite{handbook-van-mill} where a stronger condition $H_\omega$
 is considered which allows one of the sequences $(A_n)_{n\in \N}$ or $(B_n)_{n\in \N}$ to be constant. 
 \end{proof}
 
 Recall that a 
   SAW*-algebra is a C*-algebra where for any two orthogonal positive elements $a, b$ there
   is positive $c$ such that $ac=a$ and $bc=0$ (p.16 of \cite{saw}). Commutative unital SAW*-algebras
   are known to be $*$-isomorphic to $C(K)$s where $K$ is a compact $F$-space, i.e.,
   where any two disjoint open $F_\sigma$ subsets of $K$ have disjoint closures  (p.16 of \cite{saw}).
 
 \begin{lemma}\label{f-space} If $K$ is totally disconnected compact $F$-space (i.e., $C(K)$
 is a SAW*-algebra), then the Boolean algebra $Clop(K)$ of clopen subsets of $K$
 satisfies $H_\omega'$ of Definition \ref{def-h-omega}.
 \end{lemma}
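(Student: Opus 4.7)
The plan is to pass to the topological dual side and separate two disjoint open $F_\sigma$ sets by a clopen set using the $F$-space property together with zero-dimensionality. Let $(A_n)_{n\in\N}$ and $(B_n)_{n\in\N}$ in $Clop(K)$ satisfy
$$A_n\subsetneq A_{n+1}\subsetneq\cdots\subsetneq B_{n+1}\subsetneq B_n.$$
In particular, for every $n,m\in\N$ we have $A_n\subseteq A_{\max(n,m)}\subseteq B_{\max(n,m)}\subseteq B_m$, so $A_n\subseteq B_m$ for all $n,m$.

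Define
$$U=\bigcup_{n\in\N}A_n,\qquad W=\bigcup_{n\in\N}(K\setminus B_n).$$
Both are open $F_\sigma$ subsets of $K$. They are disjoint: if $x\in U\cap W$, then $x\in A_n$ and $x\notin B_m$ for some $n,m$, contradicting $A_n\subseteq B_m$. Since $K$ is an $F$-space, $\overline{U}\cap\overline{W}=\emptyset$. Thus $\overline{U}$ is a compact set contained in the open set $K\setminus\overline{W}$.

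Now I use total disconnectedness: a compact Hausdorff totally disconnected space is zero-dimensional, so any compact subset of an open set can be surrounded by a clopen set lying inside that open set. This yields $C\in Clop(K)$ with
$$\overline{U}\subseteq C\subseteq K\setminus\overline{W}.$$
Then for each $n\in\N$ we have $A_n\subseteq U\subseteq C$ and $K\setminus B_n\subseteq W\subseteq K\setminus C$, hence $A_n\subseteq C\subseteq B_n$.

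It remains to verify strictness for every $n$. From $A_n\subsetneq A_{n+1}$ pick $x\in A_{n+1}\setminus A_n$; then $x\in U\subseteq C$ while $x\notin A_n$, so $A_n\subsetneq C$. From $B_{n+1}\subsetneq B_n$ pick $y\in B_n\setminus B_{n+1}$; then $y\in K\setminus B_{n+1}\subseteq W\subseteq K\setminus C$, so $y\in B_n\setminus C$ and $C\subsetneq B_n$. Thus $C$ witnesses $H_\omega'$. I do not foresee a real obstacle here: the only nonobvious ingredient is that the chain condition forces $U$ and $W$ to be disjoint open $F_\sigma$ sets, after which the $F$-space hypothesis and zero-dimensionality do all the work.
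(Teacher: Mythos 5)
Your proof is correct, and it is exactly the standard argument that the paper outsources with its one-line citation to van Mill: the chain forces $\bigcup_n A_n$ and $\bigcup_n(K\setminus B_n)$ to be disjoint open $F_\sigma$ sets, the $F$-space property separates their closures, and zero-dimensionality interpolates a clopen $C$, with strictness coming from the strict inclusions $A_n\subsetneq A_{n+1}$ and $B_{n+1}\subsetneq B_n$. Nothing is missing.
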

 \begin{proof}
   The proof is the same as Lemma 1.2.3 of \cite{handbook-van-mill}).
  \end{proof}

 \begin{definition}\label{def-BX} Suppose that $\X$ is a maximal infinite almost disjoint family of infinite subsets of $\N$.
 Let $2\N$, $2\N+1$ denote the sets of all even and all odd elements of $\N$ respectively. We define:
 \begin{enumerate}
 \item  For $Y\subseteq \N:$ \ $\partial Y=\{n\in \N: |Y\cap\{2n, 2n+1\}|=1\}$
 \item 
 $\B_\X=\{Y\subseteq \N: 
\partial Y\subseteq X_1\cup \dots\cup X_k\cup F \ \hbox{for some}\  X_1, \dots X_k\in \X; k\in \N; F\subseteq \N 
\ \hbox{is finite}  \}$.
 \end{enumerate}
 \end{definition}
 As $\partial(\N\setminus Y)=\partial Y$, $\partial(Y\cup Y'), \partial(Y\cap Y')\subseteq \partial Y\cup \partial Y'$
 for every $Y, Y'\subseteq \N$, it is clear that $\B_\X$ is a Boolean subalgebra of $\wp(\N)$. Moreover 
 $Fin, \wp( X_1\cup \dots\cup X_k)\subseteq \B_\X$ for every $X_1, \dots X_k\in \X$ and  $k\in \N$.
 
 \begin{proposition}\label{few-projections} Suppose that $\X$ is a maximal almost disjoint family and 
 $\mathfrak A=(\alpha_X: X\in \X)$ satisfies $\alpha_X\in (71/72, 1]$
 for $X\in \X$ and $\alpha_X\not=\alpha_Y$, for $X, Y\in \X$
 whenever $X\not=Y$. Then 
 \begin{enumerate}
 \item $\pi[\M_{\X, \mathfrak A}]$ is a  masa in  $\QQ$ generated by projections,
 \item  whenever $Q=(Q_n)_{n\in \N}\in \A_0$ is a projection
 and $\nabla Q=\{n: rank(Q_n)=1\}$ is not covered by the union of finitely many elements of $\X$ and a finite set,
 then $Q\not\in \M_{\X, \mathfrak A}$.
 \item $\pi[\M_{\X, \mathfrak A}]$ is $*$-isomorphic to $C(K_{\B_\X/Fin})$, where
 $K_{\B_\X/Fin}$ is the Stone space of the Boolean algebra $\B_\X/Fin$.
 \item $Proj(\pi[\M_{\X, \mathfrak A}])$ does not  satisfy  condition $H_\omega'$, and
 so $\pi[\M_{\X, \mathfrak A}]$ is not SAW*-algebra and it does not have a commutative lift to $\bb$.
 \item $\pi[\M_{\X, \mathfrak A}]$ does not have the Grothendieck property, and so
 it does not admit a conditional expectation from $\QQ$.
 \end{enumerate}
 \end{proposition}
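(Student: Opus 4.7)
The plan is to handle the five parts in order; (1) is direct, (2) is the main technical step, and (3)--(5) are deductions from (2) together with general facts.

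For (1), MAD implies wide (any infinite $Y\subseteq \N$ meets some $X\in \X$ infinitely by maximality) and coherence of $\mathfrak A$ is vacuous, since distinct elements of $\X$ intersect finitely; hence Proposition~\ref{prop-zfc} gives (1) immediately.

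For (2), let $Q=(Q_n)_{n\in \N}\in \M_{\X, \mathfrak A}\cap\A_0$ be a projection and $X\in \X$. Then $P_XQP_X=R_X+K_X$ with $R_X\in \D(X,\alpha_X)$ and $K_X\in \K$, and the block-diagonal identity $(P_XQP_X)^2=P_XQP_X$ forces $R_X^2-R_X\in \K$; writing $R_{X,n}=a_nF_{\alpha_X,n}+b_nF_{\alpha_X,n}^\perp$, this gives $(a_n,b_n)\to \{0,1\}^2$ along $n\in X$. Thus for $n\in X$ large each $Q_n$ is arbitrarily close to one of $0$, $I_n$, $F_{\alpha_X,n}$, $F_{\alpha_X,n}^\perp$, the rank-one cases accounting for $X\cap \nabla Q$ up to a finite set. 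Now assume toward contradiction that $\nabla Q$ is not covered by finitely many members of $\X$ plus a finite set; since $\X$ is MAD this yields infinitely many distinct $X_i\in \X$ with $X_i\cap\nabla Q$ infinite. The decisive step, and where I expect most of the work, is to turn this into a concrete violation of $Q\in \M_{\X, \mathfrak A}$: using that the $\alpha_{X_i}$ are pairwise distinct so that $\|F_{\alpha_{X_i},n}-F_{\alpha_{X_j},n}\|$ is a fixed positive constant, I plan to construct an auxiliary $S\in \M_{\X, \mathfrak A}$ (a thinned diagonal sum of rank-one projections $F_{\alpha_{X_{i_k}},n_k}$ at carefully chosen coordinates $n_k\in X_{i_k}\cap\nabla Q$) whose commutator with $Q$ cannot be compact, contradicting the almost commutativity of $\M_{\X, \mathfrak A}$ from Lemma~\ref{almost-masa}.

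For (3), I build a Boolean map $Y\mapsto \pi(Q_Y)$ from $\B_\X$ to $Proj(\pi[\M_{\X, \mathfrak A}])$: for $Y\in \B_\X$ with $\partial Y\subseteq X_1\cup\cdots\cup X_k\cup F$, set $(Q_Y)_n$ to $0$, $I_n$, $F_{\alpha_{X_i},n}$ or $F_{\alpha_{X_i},n}^\perp$ according to whether $Y\cap\{2n,2n+1\}$ is $\emptyset$, $\{2n,2n+1\}$, $\{2n\}$ or $\{2n+1\}$, choosing any $X_i\ni n$ (different admissible choices differ on a finite set by almost disjointness). The kernel is $Fin$; surjectivity onto projections of $\pi[\M_{\X, \mathfrak A}]$ follows from (2) plus the rounding of $Q_n$ to the canonical four values. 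Lemma~\ref{cstar-ba-iso}, applied using that $\pi[\M_{\X, \mathfrak A}]$ is generated by projections by (1), yields the $*$-isomorphism with $C(K_{\B_\X/Fin})$.

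For (4) and (5), I exhibit a chain in $\B_\X/Fin$ violating $H_\omega'$ by fixing distinct $X_n\in \X$ and constructing strictly increasing $A_n$ and strictly decreasing $B_n$ via progressive one-sided pair-splittings on $\bigcup_{k\leq n}X_k$; any separating $C\in \B_\X/Fin$ would force $\partial C$ to leave every finite union of members of $\X$, forbidden by the definition of $\B_\X$. Lemmas~\ref{h-omega} and~\ref{f-space} then rule out $K_{\B_\X/Fin}$ being an $F$-space, so $\pi[\M_{\X, \mathfrak A}]$ is not SAW*, and Lemma~\ref{masa-masaQQ} rules out a commutative lift (the three lift-types (1)--(3) in the Introduction all satisfy $H_\omega'$). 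For (5), the same chain provides a sequence of disjoint clopen sets in $K_{\B_\X/Fin}$ on which one constructs a weak$^*$-null but not weakly null sequence of measures, giving failure of Grothendieck; by Tomiyama's theorem combined with $\QQ$ having the Grothendieck property, no conditional expectation from $\QQ$ onto $\pi[\M_{\X, \mathfrak A}]$ can exist.
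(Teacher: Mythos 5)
Parts (1), (3), (4) and (5) of your plan are essentially the paper's argument (your $H_\omega'$-violating chain is phrased inside $\B_\X/Fin$ rather than directly with projections, which is an equivalent and slightly cleaner packaging given (3); the appeal to Tomiyama in (5) is unnecessary --- one only needs that a conditional expectation makes the masa a Banach-space quotient of $\QQ$ and that the Grothendieck property passes to quotients). The problem is (2), which is the load-bearing step for (3)--(5).

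Your strategy for (2) --- build $S\in\M_{\X,\mathfrak A}$ as a thinned diagonal sum of rank-one projections $F_{\alpha_{X_{i_k}},n_k}$ at coordinates $n_k\in X_{i_k}\cap\nabla Q$ and show $[S,Q]\notin\K$ --- has a genuine gap: such an $S$ will in general \emph{not} lie in $\M_{\X,\mathfrak A}$. The support $\{n_k:k\in\N\}$ is infinite, so by maximality of $\X$ some $Y\in\X$ meets it in an infinite set, and the defining condition $P_YSP_Y\in\D_\kk(Y,\alpha_Y)$ then forces the blocks $F_{\alpha_{X_{i_k}},n_k}$ (for $n_k\in Y$) to be asymptotically diagonal with respect to $\F_{\alpha_Y,\cdot}$, i.e.\ forces $\alpha_{X_{i_k}}\to\alpha_Y$ along that subsequence; nothing in your construction arranges this. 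Worse, the commutator route is structurally blocked: any $S\in\M_{\X,\mathfrak A}$ supported in a single $X\in\X$ lies in $\D_\kk(X,\alpha_X)$ modulo compacts, and $Q$ itself satisfies $P_XQP_X\in\D_\kk(X,\alpha_X)$, so $[S,Q]$ is automatically compact there; the obstruction to $Q\in\M_{\X,\mathfrak A}$ cannot be detected by commutators against members of $\M_{\X,\mathfrak A}$ (which is, after all, almost commutative by Lemma \ref{almost-masa}). The paper instead turns the thinned set of coordinates $Z=\bigcup_k\bigl((X_k\cap\nabla Q)\setminus G_k\bigr)$ directly against $Q$'s \emph{own} membership condition: by maximality there is a further $Y\in\X\setminus\{X_k:k\}$ with $Y\cap Z$ infinite, and along $Y\cap Z$ the blocks $Q_n$ must be simultaneously close to $\{F_{\alpha_{X_k},n},F_{\alpha_{X_k},n}^\perp\}$ and to $\{F_{\alpha_Y,n},F_{\alpha_Y,n}^\perp\}$, which is impossible. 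A second point you gloss over: ``$\|F_{\alpha_{X_i},n}-F_{\alpha_{X_j},n}\|$ is a fixed positive constant'' holds for each fixed pair but is not uniform, since the injective family $(\alpha_{X_i})$ can accumulate. This is why the paper first passes to a subsequence with $\alpha_{X_k}\to\beta$ and then chooses the catching $Y$ with $\alpha_Y\neq\beta$ (possible because uncountably many members of $\X$ meet $Z$ infinitely, while at most one has $\alpha=\beta$). Without both of these moves your argument for (2) does not close.
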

 \begin{proof} For (1) note that a maximal almost disjoint family is always a wide family of sets.
 Also $\mathfrak A$ satisfying the hypothesis of the lemma is trivially coherent.
 So by Proposition \ref{prop-zfc} the algebra  $\pi[\M_{\X, \mathfrak A}]$ is a masa of $\QQ$ generated by 
 projections.
 
 Let $U_n:\ell_2(\{e_{2n}, e_{2n+1}\})\rightarrow \ell_2(\{e_0, e_1\})$ be unitary
 such that $U_n(e_{2n+i})=e_i$ for $i\in \{0,1\}$.
 
 For the proof of (2) we will use the fact that if $X\in \X$, $Q=(Q_n)_{n\in \N}\in \M_{\X, \mathfrak A}$ and 
 $Q_n$ is one-dimensional projection for all
 $n\in X'$ for some infinite $X'\subseteq X$, then 
 $$\lim_{n\in X'}\min(\|Q_n-F_{\alpha_X, n}\|, \|Q_n-F_{\alpha_X, n}^\perp\|)=0\leqno (*)$$
 This is because   by Lemma \ref{limit-compact}
  and Definition \ref{def-M} the only
 accumulation points of $U_nQ_nU^*_n\in \B(\ell_2(\{e_0, e_1\}))$ must be diagonalizable
  with respect to $\F_{\alpha_X, 0}$, but their
 accumulation points must also be one dimensional projections. However, there are  only two
 projections $F_{\alpha_X, 0}$ and $F_{\alpha_X, 0}^\perp$ which satisfy both of these requirements.

 Assume that $Q$ and $\nabla Q$ are as in (2).
 By the maximality of $\X$ we can recursively build distinct $X_k\in \X$ such that
 $X_k\cap \nabla Q$ is infinite for all $k\in \N$. By passing to a subsequence we may assume that
 $(\alpha_{X_k})_{k\in \N}$ converges to $\beta\in [71/72,1]$.  
 Using (*) for any $k\in \N$ find finite $G_k\subseteq X_k$ such that 
 $$\|Q_n-F_{\alpha_{X_k}, n}\|<1/4k\ \hbox{or}\ \|Q_n-F_{\alpha_{X_k}, n}^\perp\|<1/4k\leqno(**)$$
 for all $n\in X_k\cap \nabla Q\setminus G_k$ and any $k\in \N$ and $(X_k\setminus G_k)\cap(X_{k'}\setminus G_{k'})=\emptyset$
 for any two distinct $k, k'\in\N$.
 
 Let $Z=\bigcup_{k\in \N}((X_k\cap \nabla Q)\setminus G_k)$. 
 Note that $Z$ is not covered by the union of finitely many elements of almost disjoint $\X$ and a finite set
 because it intersects infinitely many of them on infinite sets.
 Considering the family of all infinite sets of the form $Z\cap X$ for  $X\in \X$, 
 using the maximality of $\X$ we see that this family is
 a maximal infinite almost disjoint family of infinite subsets of $Z$ and such families cannot be countable. 
 So  there are uncountably many $X\in \X\setminus \{X_k: k\in \N\}$ with $X\cap Z$ infinite.  
We may find $X\in \X\setminus \{X_k: k\in \N\}$ such that $X\cap Z$ is infinite
 and $\alpha_X\not=\beta$. The set $X\cap Z$ must intersect infinitely many $X_k$s, because 
 $X\cap Z$ is infinite and $X\cap X_k$ are finite since $X, X_k\in \X$ are distinct. So
 there is an  infinite $A\subseteq \N$ such that
 for $k\in A$  there are $n_k\in X\cap(X_k\cap \nabla Q\setminus G_k)$ which  by
 (**) means that
 $$\|Q_{n_k}-F_{\alpha_{X_k}, n_k}\|<1/4k\ \hbox{or}\ \|Q_{n_k}-F_{\alpha_{X_k}, n_k}^\perp\|<1/4k$$
 But (*) also implies that 
 $$\lim_{k\in A}\min(\|Q_{n_k}-F_{\alpha_X, n_k}\|, \|Q_{n_k}-F_{\alpha_X, n_k}^\perp\|)=0.$$
 The above two statements imply that
 $$\lim_{k\in A}\min(\{\|S-T\|: S\in \{F_{\alpha_X, n_k}, F_{\alpha_X, n_k}^\perp\}, 
 T\in\{F_{\alpha_{X_k}, n_k}, F_{\alpha_{X_k}, n_k}^\perp\} \})=0.$$
 Applying the conjugation by $U_n$ this gives that 
 $\{F_{\beta, 0}, F_{\beta, 0}^\perp\}\cap \{F_{\alpha_X, 0}, F_{\alpha_X, 0}^\perp\}\not=\emptyset$
 since $\alpha_{X_k}$s converge to $\beta$.
But this is impossible for distinct $\alpha_X, \beta\in (71/72, 1]$, which completes the proof of (2).
 
 For (3) 
 first, given a finite $\Y=\{X_1,  \dots, X_k\}\subseteq \X$ and finite $F\subseteq \N$ 
 define a Boolean algebra 
 $$\B_{\Y, F}
=\{Y\subseteq \N: 
\partial Y\subseteq \bigcup\Y\cup F  \}\subseteq \B_\X.$$ 
For $Z\in \B_{\Y, F}$ define $h^{\Y, F}(Z)\in Proj(\M_{\X, \mathfrak A})$ by
  $$ h^{\Y, F}(Z)_n=
  \begin{cases}
    F_{\alpha_{X_i}, n} & \text{if $2n\in Z$   and $n\in \partial Z\cap X_i\setminus
    \big(\bigcup_{j\not= i}X_j\cup F\big)$ and $1\leq i\leq k$,} \\
    F_{\alpha_{X_i}, n}^\perp & \text{if $2n+1\in Z$ and $n\in \partial Z\cap  
    X_i\setminus\big(\bigcup_{j\not= i}X_j\cup F\big)$ and $1\leq i\leq k$,} \\
    E_{n} &  \text{if $2n\in Z$   and $n\in \partial Z\cap \big(F\cup \bigcup\{X_i\cap X_j: 1\leq i<j\leq k\}\big)$,} \\
    E_{n}^\perp &  \text{if $2n+1\in Z$   and $n\in \partial Z\cap \big(F\cup \bigcup\{X_i\cap X_j: 1\leq i<j\leq k\}\big)$,} \\    
    I_n & \text{if $\{2n, 2n+1\}\subseteq Z$,} \\
    0_n & \text{if $\{2n, 2n+1\}\cap Z=\emptyset$.}
  \end{cases}
 $$
 Since distinct cases in the above definition are excluding each other, it should be clear that
 $h^{\Y, F}:\B_{\Y, F} \rightarrow Proj(\M_{\X, \mathfrak A})$ is a Boolean monomorphism.
 Also $h^{\Y, F}(Z)$ and $h^{\Y', F'}(Z')$ differ by a compact operator if
 $Z=^*Z'$ and  $Z\in \B_{\Y, F}$ and $Z'\in \B_{\Y', F'}$ because $\B_{\Y, F},  \B_{\Y', F'}\subseteq 
 \B_{\Y\cup\Y', F\cup F'}$.
 It follows that $h: \rho[\B_\X]\rightarrow \pi[Proj(\M_{\X, \mathfrak A})]$ defined by
 $$h(\rho(Z))=\pi(h^{\Y, F}(Z)),$$
 where $\Y\subseteq\X$ and $F\subseteq \N$ are finite such that $Z\in \B_{\Y, F}$ is a Boolean homomorphism. 
 Moreover it is monomorphism since if $Z\not\in Fin$, then $h^{\Y, F}(Z)$ is noncompact by Lemma \ref{limit-compact}.
 Since $Fin\subseteq \B_\X$ is
 the kernel of $\rho$ we conclude that $\rho[\B_\X]=\B_\X/Fin$. 
 So to prove (3) with are left with proving that $h$ is onto $Proj(\pi[\M_{\X, \mathfrak A}])$
 
 It is enough to prove that $h$ is onto $\pi[Proj(\M_{\X, \mathfrak A})]$. This is because
 $\M_{\X, \mathfrak A}$ is generated by its projections by Lemma \ref{gen-projections}, so
  $\pi[\M_{\X, \mathfrak A}]$ is generated by 
  $\pi[Proj(\M_{\X, \mathfrak A})]$. However, if a commutative C*-algebra $C(K)$ is generated by
  some Boolean algebra $\CC$ of its projections, then $\CC$ contains all projections (any
  clopen set can be written as a finite union of finite intersections of clopen sets corresponding to elements of $\CC$, because
  of the compactness and separation of points of $K$ by $\CC$).
 
To prove that $h$ is onto $\pi[Proj(\M_{\X, \mathfrak A})]$ we use (2).
For any $Q\in Proj(\M_{\X, \mathfrak A})$ we have 
finite $\Y\subseteq\X$ and finite $F\subseteq \N$  
such that $\nabla Q\subseteq\bigcup\Y\cup F$, so $Q\in h^{\Y, F}[\B_{\Y, F}]$ and hence
$\pi(Q)\in h[\rho[\B_\X]]$ as required.

To prove (4)  for  $k\in \N$ pick distinct 
$X_k\in\X$
  and  finite $F_k\subseteq X_k$ such that $(X_k\setminus F_k)$s are pairwise disjoint.
  
For $m\in \N$ define projections $P(m), Q(m)$ 
  $$ P(m)_n=
  \begin{cases}
    F_{\alpha_{X_k}, n} & \text{for $n\in X_k\setminus F_k$, \ $k\leq m$} \\
    0_n & \text{otherwise.}
  \end{cases}
 $$
 $$ Q(m)_n=
  \begin{cases}
    F_{\alpha_{X_k}, n} & \text{for $n\in X_k\setminus F_k$, \ $k\leq m$} \\
    I_n & \text{otherwise.}
  \end{cases}
 $$
 Note that $P(m), Q(m)\in\M_{\X, {\mathfrak A}}$ by Lemma \ref{Ds-inM}
  and  
  $$\pi(P(m))<\pi(P(m+1))< \dots <\pi(Q(m+1))<\pi(Q(m)).$$
  To prove that condition $H_\omega'$ fails
 in $Proj(\pi[\M_{\X, {\mathfrak A}}])$ (recalling from the proof of (3) that
 $Proj(\pi[\M_{\X, {\mathfrak A}}])=\pi[Proj(\M_{\X, {\mathfrak A}})]$) 
  it is enough to show that
 any projection $R\in \A_0$ satisfying $\pi(P(m))<\pi(R)<\pi(Q(m))$ 
 satisfies $X_k\subseteq^*\nabla R$  for every $k\in \N$ which is impossible
 inside $\M_{\X, {\mathfrak A}}$ by (2). 
 
 If $R=(R_n)_{n\in \N}\in \A_0$ is a projection satisfying 
 $\pi(P(m))\leq\pi(R)\leq\pi(Q(m))$, then $(P(m)_nR_n-P(m)_n)_{n\in \N}$
 and $(Q(m)_nR_n-R_n)_{n\in \N}$ converge to zero by Lemma \ref{limit-compact}.
 Since both $P(m)_n$ and $Q(m)_n$ are one-dimensional projections
 for almost all $n\in X_k$ for $k\leq m$, it follows that
 $R_n$ cannot be $0$ on infinitely many $n\in X_k$ for each $k$
 and $R_n$ cannot be $I_n$ on infinitely many $n\in X_k$ for each $k$.
 It follows that $R_n$ is one dimensional projection 
 for almost all $n\in X_k$ for all $k\in \N$ as required for (4).

To conclude the second part of (4) recall that by Lemma \ref{masa-masaQQ} 
masas of $\QQ$ that have commutative lifts are of the form $C(K)$, where
$K$ is one of the spaces listed in items (1) - (3) in the Introduction.
The Boolean algebra of clopen subsets of such $K$s are complete or isomorphic to
$\wp(\N)/Fin$, or direct sums of such algebras, so they satisfy condition $H_\omega'$ by Lemma \ref{h-omega}. 
By Lemma \ref{f-space}, this also implies that $\pi[\M_{\X, \mathfrak A}]$ is not a SAW*-algebra.

To prove (5) recall that a Banach space $B$ has the Grothendieck property if
the weak$^*$ convergent sequences in the dual $B^*$ of $B$ are weakly convergent.
For more on the Grothendieck propery see \cite{kania}. 
So to prove (5) we need to produce a sequence of elements of $(\pi[\M_{\X, \mathfrak A}])^*$.
Let $X_k\in \X$ be distinct for $k\in \N$ and let $u_k$ be a fixed nonprincipal ultrafilter in $\wp(\N)$
which contains $X_k$.  Recall that if $(c_n)_{n\in N}$ is a sequence of complex numbers,
then the limit $\lim_{u}c_n$ of $(c_n)_{n\in N}$ along $u$ is a complex number $c$ such that
for each $\varepsilon>0$ there is $A\in u$ such that $|c_n-c|<\varepsilon$ for all $n\in A$.
It is clear that for each bounded sequence $(c_n)_{n\in \N}$ there is $\lim_{u}c_n$.

When $T\in \M_{\X, \mathfrak A}$ and $X\in \X$, by Definition \ref{def-M} we have
$P_XTP_X\in \D_\kk(X, \alpha_X)$ which means that 
$P_XTP_X=D(T, X)+K(T, X)$, where
$D(T, X)_n$ is diagonal with respect to $\F_{\alpha_X, n}$ for $n\in X$ and $K(T, X)\in \K\cap \A_0(X)$
by Definition \ref{def-Ds}. So 
$$D_n(T, X)=a_n(D(T, X))F_{\alpha_X, n}+b_n(D(T, X))F_{\alpha_X, n}^\perp.$$
Define $x_{k, i}: \M_{\X, \mathfrak A}\rightarrow \C$ for $k\in \N$ and $i=0,1$ as 
$$x_{k, 0}(T)=\lim_{u_k}a_n(D(T, X_k)), \ x_{k, 1}(T)=\lim_{u_k}b_n(D(T, X_k)).$$
Of course the values of $a_n(D(T, X))$s and $b_n(D(T, X))$s depend on the choice of $D(T, X)$ and $K(T, X)$
which is not unique, but for any other such choice $D'(T, X)$ and $K'(T, X)$ satisfying
$D'(T, X)+K'(T, X)=P_XTP_X=D(T, X)+K(T, X)$ we have $D'(T, X)-D(T, X)$ compact
so $(a_n(D(T, X))-a_n(D'(T, X)))_{n\in \N}$ and $(b_n(D(T, X))-b_n(D'(T, X)))_{n\in \N}$ converge to zero
by Lemma \ref{limit-compact}. So as $u_k$s are nonprincipal, the values of
$x_{k, i}$ for $k\in \N$ and $i=0,1$ actually do not depend on the choice of
$D(T, X)$ and $K(T, X)$ and so are well defined for $T\in \M_{\X, \mathfrak A}$.

The definition of $x_{k, i}$ for $k\in \N$ and $i=0,1$ easily implies that 
they are multiplicative linear functionals of norm one on  $\M_{\X, \mathfrak A}$ which are
null on compact operators, so they define  by $y_{k, i}(\pi(T))=x_{k_i}(T)$ multiplicative linear
 functionals $y_{k, i}$ for $k\in \N$ and $i=0,1$ on  $\pi[\M_{\X, \mathfrak A}]$.
 Hence $y_{k, i}$ can be associated with Dirac measures $\delta_{t_{k, i}}$ concentrated in points $t_{k, i}$ of the Gelfand space of 
 $\pi[\M_{\X, \mathfrak A}]$ which belong to pairwise disjoint clopen sets corresponding
 to the projections $\pi[P_{X_k}]$. It follows that  the sequence of 
 $$z_k=y_{k, 0}-y_{k, 1}$$
 does not weakly converge in $(\pi[\M_{\X, \mathfrak A}])^*$. This is because
 $G=\{t_{2k, 0}: k\in \N\}$ is a Borel subset of the Gelfand space of $\pi[\M_{\X, \mathfrak A}]$ since it is countable,
 so $G$ defines a functional $\phi_G$ in the bidual $(\pi[\M_{\X, \mathfrak A}])^{**}$  by taking
 values of  Radon measures (associated with elements of the dual of $\pi[\M_{\X, \mathfrak A}]$ by the Riesz representation
 theorem for the duals of $C(K)$ spaces) on $G$. Moreover 
  $$ \phi_G(z_k)=z_k(G)=\delta_{t_{k, 0}}(G)-\delta_{t_{k, 1}}(G)=
  \begin{cases}
    1-0=1 & \text{for $n\in \N$ even} \\
    0-0=0 & \text{for $n\in \N$ odd}
  \end{cases}
 $$
 So $\phi_G$ is a witness of the fact that $(z_k)_{k\in \N}$ does not converge in the weak topology on the
 dual $(\pi[\M_{\X, \mathfrak A}])^{*}$.
 
 Now we will show that $(z_k)_{k\in \N}$ does  converge to zero in the weak$^*$ topology on the
 dual $(\pi[\M_{\X, \mathfrak A}])^{*}$ which will complete the proof of the failure of the Grothendieck
 property for $\pi[\M_{\X, \mathfrak A}]$. Since $\M_{\X, \mathfrak A}$ is generated by 
 projections, it is enough to show that for any projection $Q\in \M_{\X, \mathfrak A}$
 we have  $z_k(\pi(Q))=0$ for all but finitely many $k\in \N$. 
 
 Let $Q=(Q_n)_{n\in \N}$ be a projection in $\M_{\X, \mathfrak A}$. By (2) for all but finitely many $k$s the projections $Q_n$  are either $0$ or $I_n$ 
 for almost all $n\in X_k$. This means that for such $n$s we have $a_n(Q, X_k)=b_n(Q, X_k)$
 and so 
 $$z_k(\pi(Q))=y_{k, 0}(\pi(Q))-y_{k_1}(\pi(Q))=x_{k, 0}(Q)-x_{k, 1}(Q)=0$$
 as needed for the convergence of $z_k$s to zero in the weak$^*$ topology 
 in the dual $(\pi[\M_{\X, \mathfrak A}])^{*}$.
 
 To conclude the second part of (5) recall that the Grothendieck property is preserved by
 quotients of Banach spaces (Proposition 3.1.4 of \cite{kania}) and $\bb$ has the Grothedieck property
  by \cite{pfitzner} and so
 $\QQ$ has it as well. So since $\pi[\M_{\X, \mathfrak A}]$ fails the Grothedieck property,
 it is not isomorphic as a Banach space to a quotient of $\QQ$,  and so it cannot admit
 a conditional expectation form $\QQ$.
 \end{proof}
 
 Definitely,  one could use a simpler, diagonal argument to prove  the last part of Proposition \ref{few-projections} (4)
 that $\pi[\M_{\X, \mathfrak A}]$   does not have a commutative lift to $\bb$.
 Namely,  one takes a maximal almost disjoint family
 of cardinality $\cc$ and  chooses $\mathfrak A(X)$s for $X\in \X$ in $(71/72, 1]$ to disagree with the image under $\pi$
 of each masa in $\bb$  as there are only $\cc$ such masas by Proposition 12.3.1  of \cite{ilijas-book}.
 However, our argument shows a concrete property  which is the reason behind the non-lifting. 
 In fact, the possibility of manipulating $\mathfrak A(X)$s for $X\in \X$ seems to be related to
 the content of the paper \cite{ghk}, where Akemann-Doner C*-algebras were investigated.
 It seems that using the techniques of this paper one could try to investigate the 
 densities of C*-subalgebras of the algebras $\pi[\M_{\X, \mathfrak A}]$ which have
 commutative lifts to $\bb$.

 Now we would like to vary the the families $\X$ which induce 
 $\M_{\X, \mathfrak A}$ in order to see that we have maximal  number (equal to $2^\cc$)  
 of pairwise non-$*$-isomorphic masas of $\QQ$ of the form $\M_{\X, \mathfrak A}$.
 
 \begin{definition}\label{objects-a} $ $
 \begin{enumerate}
 \item  $Y_k$ for $k\in \N$ denotes the set of all elements of $\N$ divisible by the $k$-th
 prime number.
\item $h_0: Clop(\{0,1\}^\N)\rightarrow \wp(\N)$ is the Boolean monomorphism
 such that  for each $k\in\N$ we have 
 $$h_0(\{x\in \{0,1\}^\N: x(k)=1\})=Y_k.$$
 \item For $x\in \{0,1\}^\N$ by $\X_x$ we denote
 $$\{X\subseteq \N: \forall n\in \N\   X\subseteq^* h_0([x|n])\}$$
 \item For $x\in \{0,1\}^\N$ by $\Y_x$ we denote a fixed infinite
 maximal almost disjoint family of infinite subsets from $\X_x$.
\item For $a\subseteq \{0,1\}^\N$ we define
$$\X_a=\bigcup\{\X_x: x\in a\}\cup\bigcup\{\Y_x: x\in \{0,1\}^\N\setminus a\}.$$
\item For $a\subseteq \{0,1\}^\N$ we fix ${\mathfrak A}_a\in (71/72, 1]^{\X_a}$  such that
$$\{{\mathfrak A}_a^{-1}[\{r\}]: r\in (71/72, 1]\}=
\{\X_x: x\in a\}\cup\{\{X\}: X\in \bigcup\{\Y_x: x\in \{0,1\}^\N\setminus a\}.$$

 \end{enumerate}
 \end{definition}

 \begin{lemma}\label{a-wide} Suppose that $a\subseteq \{0,1\}^\N$. 
   Then $\X_{a}$ is wide.
 \end{lemma}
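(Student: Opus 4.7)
The plan is to show directly that for every infinite $X' \subseteq \N$ there is some $X \in \X_a$ with $X \cap X'$ infinite, by building an appropriate $x \in \{0,1\}^\N$ via a König-style tree recursion on $X'$ and then splitting according to whether $x \in a$ or not.

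First I would construct $x \in \{0,1\}^\N$ such that $X' \cap h_0([x|n])$ is infinite for every $n \in \N$. At stage $n$, having chosen $x|n$ so that $X' \cap h_0([x|n])$ is infinite, observe that $[x|n] = [x|n\frown 0] \sqcup [x|n\frown 1]$ in $Clop(\{0,1\}^\N)$, and since $h_0$ is a Boolean monomorphism this yields a disjoint union $h_0([x|n]) = h_0([x|n\frown 0]) \sqcup h_0([x|n\frown 1])$ in $\wp(\N)$. Consequently at least one of the two intersections $X' \cap h_0([x|n\frown i])$ is infinite, and I choose $x(n) \in \{0,1\}$ to be such an $i$.

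Next I diagonalize: recursively pick $m_n \in X' \cap h_0([x|n])$ with $m_n > m_{n-1}$, which is possible because each such intersection is infinite. Setting $X = \{m_n : n \in \N\} \subseteq X'$, I obtain an infinite set with $X \setminus h_0([x|n]) \subseteq \{m_0, \ldots, m_{n-1}\}$ finite for every $n$, so $X \subseteq^* h_0([x|n])$ for every $n$, i.e., $X \in \X_x$.

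Now I split on whether $x \in a$. If $x \in a$, then by Definition \ref{objects-a}(5) we have $\X_x \subseteq \X_a$, and $X \in \X_x$ itself witnesses widthness since $X \cap X' = X$ is infinite. If $x \notin a$, then $\Y_x \subseteq \X_a$, and I use the maximality of $\Y_x$ as an almost disjoint family in $\X_x$: since $X \in \X_x$ is infinite, there must exist $X'' \in \Y_x$ with $X \cap X''$ infinite (otherwise $\Y_x \cup \{X\}$ would properly extend $\Y_x$ while remaining an almost disjoint family in $\X_x$). Then $X'' \cap X' \supseteq X'' \cap X$ is infinite and $X'' \in \X_a$, as required. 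There is no real obstacle here beyond making sure the recursion producing $x$ is justified by the disjointness of the Boolean splitting under $h_0$; the rest is bookkeeping with the almost-inclusion relation $\subseteq^*$ and the maximality of $\Y_x$.
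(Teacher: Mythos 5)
Your proof is correct and follows essentially the same route as the paper's: build $x$ recursively so that $h_0([x|n])$ meets the given infinite set in an infinite set, extract a pseudo-intersection $X\in\X_x$ inside it, and then split on $x\in a$ versus $x\notin a$, using maximality of $\Y_x$ in the latter case. Your version just spells out the two details the paper leaves implicit (the disjoint Boolean splitting under $h_0$ and the diagonalization producing $X$).
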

 \begin{proof} Let $Y\subseteq \N$ be infinite. By induction on $n\in \N$ define $x=(x(n))_{n\in \N}\in \{0,1\}^\N$ such that
 $h_0([x|n])\cap Y$ is infinite for every $n\in \N$. So there is an infinite $Z\subseteq \N$ such that
 $Z\subseteq^* h([x|n])$ for every $n\in \N$ and $Z\subseteq Y$. 
 If $x\in a$, then $Z\in \X_x$ and we are done. If $x\not\in a$, then by the maximality of $\Y_x$ there is
 $X\in \Y_x$  such that $X\cap Z$ is infinite,
 and so $X\cap Y$ is infinite, so we are done as well. 
 \end{proof}
 
 \begin{lemma}\label{a-coherent} Suppose that $a\subseteq \{0,1\}^\N$. 
  Then ${\mathfrak A}_a$ is coherent.
 \end{lemma}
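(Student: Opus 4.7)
The plan is to verify the two requirements of Definition~\ref{def-coherent}: first, that $\alpha_X \in (71/72,1]$ for every $X \in \X_a$, which is immediate from clause (6) of Definition~\ref{objects-a}; and second, that for any $X, Y \in \X_a$ with $X \cap Y$ infinite, we have ${\mathfrak A}_a(X) = {\mathfrak A}_a(Y)$. By the defining condition on the fibers of ${\mathfrak A}_a$, this is equivalent to showing that such $X$ and $Y$ lie in the same cell of the partition
\[
\{\X_x : x \in a\} \cup \{\{X\} : X \in \bigcup\{\Y_x : x \in \{0,1\}^\N \setminus a\}\}.
\]

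The key observation is a basic fact about the map $h_0$: if $x \neq x'$ in $\{0,1\}^\N$, then there is $n$ with $x|n \neq x'|n$, so the basic clopen cylinders $[x|n]$ and $[x'|n]$ are disjoint in $\{0,1\}^\N$, and since $h_0$ is a Boolean monomorphism, $h_0([x|n]) \cap h_0([x'|n]) = h_0(\emptyset)$ is empty in $\wp(\N)$. By the definition of $\X_x$, every member $X \in \X_x$ satisfies $X \subseteq^* h_0([x|n])$ for this specific $n$, and likewise $Y \subseteq^* h_0([x'|n])$ for every $Y \in \X_{x'}$, so $X \cap Y \subseteq^* \emptyset$, i.e.\ $X \cap Y$ is finite.

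From here the proof is a short case analysis on a pair $X,Y \in \X_a$ with $X \cap Y$ infinite. Case 1: $X \in \X_x$ and $Y \in \X_{x'}$ for some $x, x' \in a$. The observation forces $x = x'$, so $X, Y$ share the cell $\X_x$. Case 2: $X \in \X_x$ with $x \in a$ and $Y \in \Y_y$ with $y \notin a$ (or symmetrically). Since $\Y_y \subseteq \X_y$, the observation forces $x = y$, contradicting $x \in a$, $y \notin a$; hence this case cannot occur. Case 3: $X \in \Y_y$ and $Y \in \Y_{y'}$ with $y, y' \notin a$. The observation forces $y = y'$, and then the almost disjointness of $\Y_y$ (together with $X \cap Y$ being infinite) forces $X = Y$, so the singleton cell $\{X\} = \{Y\}$ is the common one. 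In every case $X$ and $Y$ land in the same fiber, which is exactly coherence.

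There is no real obstacle here; the entire proof is driven by the single observation that incomparable cylinders in $\{0,1\}^\N$ are disjoint, and that membership in $\X_x$ encodes $\subseteq^*$-containment in every initial cylinder through $h_0$. The remainder is routine bookkeeping between the three possible flavors of membership in $\X_a$.
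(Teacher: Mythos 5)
Your proof is correct and takes essentially the same route as the paper, which simply states (in contrapositive form) that distinct fibers of ${\mathfrak A}_a$ consist of almost disjoint sets; your key observation about incomparable cylinders mapping under the Boolean monomorphism $h_0$ to disjoint subsets of $\N$, together with the three-case analysis, is exactly the detail the paper leaves implicit. Nothing is missing.
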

 \begin{proof}
 We check Definition \ref{def-coherent}. Let $X, Y\in \X_a$. Note that
 the defining property (6) of Definition \ref{objects-a} yields that
 ${\mathfrak A}_a(X)\not={\mathfrak A}_a(Y)$ implies that $X$ and $Y$ are almost disjoint.
 \end{proof}
 
  \begin{proposition}\label{a-prop} Suppose that $a\subseteq \{0,1\}^\N$.
 Then $\pi[\M_{\X_a, {\mathfrak A}_a}]$ is a masa of $\QQ$ generated by projections.
 \end{proposition}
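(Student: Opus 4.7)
The proof will be essentially a one-line verification: all the substantive work has been done in the preceding lemmas and in Proposition \ref{prop-zfc}. The plan is to simply check that the hypotheses of Proposition \ref{prop-zfc} are satisfied by $\X_a$ and $\mathfrak{A}_a$, and then invoke that proposition directly.

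More concretely, I would first cite Lemma \ref{a-wide} to conclude that $\X_a$ is a wide family of subsets of $\N$ in the sense of Definition \ref{def-wide}. Next, I would cite Lemma \ref{a-coherent} to conclude that the system $\mathfrak{A}_a = (\alpha_X : X \in \X_a)$ is coherent in the sense of Definition \ref{def-coherent}. With both of these hypotheses verified, Proposition \ref{prop-zfc} applies directly to the pair $(\X_a, \mathfrak{A}_a)$ and yields that $\pi[\M_{\X_a, \mathfrak{A}_a}]$ is a masa of $\QQ$ which is generated by projections.

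There is no real obstacle here — this proposition is stated as a packaging of the preceding two lemmas through the general machinery of Proposition \ref{prop-zfc}. The genuine content (that $\M_{\X, \mathfrak{A}}$ is an almost masa of $\A_0$, that $\pi$ pushes such almost masas to masas of $\QQ$ via Lemma \ref{crucial}, and that $\M_{\X, \mathfrak{A}}$ is generated by projections via Lemma \ref{gen-projections}) was already established in full generality; the only thing specific to the family $\X_a$ and the choice $\mathfrak{A}_a$ is the pair of verifications carried out in Lemmas \ref{a-wide} and \ref{a-coherent}. Accordingly, the proof can be written in essentially one sentence.
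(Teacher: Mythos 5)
Your proposal matches the paper's proof exactly: the paper's argument is the one-line citation of Lemmas \ref{a-wide} and \ref{a-coherent} followed by an application of Proposition \ref{prop-zfc}. Nothing further is needed.
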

 \begin{proof}
 Use Lemmas \ref{a-wide} and \ref{a-coherent} and Proposition \ref{prop-zfc}.
 \end{proof}
 
 \begin{lemma}\label{restriction-M} Suppose that $a\subseteq \{0,1\}^\N$.
 Then for every $x\in \{0,1\}^\N\setminus a$
   there is a projection of the form $P_X \in \D_0\subseteq \M_{\X_a, {\mathfrak A}_a}$ such that
   \begin{enumerate}
   \item $X\subseteq^*h_0([x|n])$ for all $n\in \N$, where $h_0$ is as in Definition \ref{objects-a}
 \item $P_X\M_{\X_a, {\mathfrak A}_a}P_X$ is $*$-isomorphic to
 the algebra $\M_{\Z, {\mathfrak A}''}$  for a maximal almost disjoint family $\Z\subseteq \wp(\N)$
 and an injective ${\mathfrak A}''\in (71/72, 1]^\Z$.
 \end{enumerate}
 In particular, if $a$ is a proper subset of $\{0,1\}^\N$, then 
  $\pi[\M_{\X_a, {\mathfrak A}_a}]$ does not have a commutative lift to $\bb$ nor it admits a conditional expectation from
  $\QQ$.
 \end{lemma}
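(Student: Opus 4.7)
The plan is, given $x \in \{0,1\}^\N \setminus a$, to construct an infinite $X \subseteq \N$ with $X \subseteq^* h_0([x|n])$ for every $n$ by a diagonal enumeration (pick $k_n \in h_0([x|n]) \setminus \{k_0,\ldots,k_{n-1}\}$, possible since each $h_0([x|n])$ is infinite, and set $X = \{k_n : n \in \N\}$). Then $X \in \X_x$ in the sense of Definition \ref{objects-a}(3), and since $P_X \in \D_0 \subseteq \M_{\X_a, \mathfrak{A}_a}$ by Lemma \ref{containsD0}, clause (1) is immediate, while $P_X \M_{\X_a, \mathfrak{A}_a} P_X \subseteq \A_0(X)$.

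The key observation for (2) is that for any $Y \in \X_a$, if $Y \cap X$ is infinite then $Y \in \Y_x$. Indeed, $Y$ lies in $\X_y$ for some $y \in a$ or in $\Y_y$ for some $y \notin a$; if $y \neq x$ then at the first index $n$ with $y(n) \neq x(n)$ the clopen sets $[y|n+1]$ and $[x|n+1]$ of $\{0,1\}^\N$ are disjoint, so $h_0([y|n+1]) \cap h_0([x|n+1]) = \emptyset$ and hence $Y \cap X$ is finite. Because $x \notin a$, the only surviving component is $\Y_x$, so only the conditions $P_Y T P_Y \in \D_\kk(Y, \alpha_Y)$ with $Y \in \Y_x$ are nontrivially imposed on $\A_0(X)$ when cutting out the compression.

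Set $\Z_0 = \{Y \cap X : Y \in \Y_x,\ Y \cap X \text{ infinite}\}$ and $\mathfrak{B}_0(Y \cap X) = \alpha_Y$. Pairwise almost-disjointness of $\Z_0$ is inherited from $\Y_x$, injectivity of $\mathfrak{B}_0$ is Definition \ref{objects-a}(6), and maximality of $\Z_0$ inside $X$ uses the maximality of $\Y_x$ in $\X_x$ (any infinite $X' \subseteq X$ still lies in $\X_x$, so meets some $Y \in \Y_x$ infinitely, and hence meets $Y \cap X$ infinitely). Conjugation by the block-permuting unitary of $\ell_2$ induced by the increasing bijection $\phi : \N \to X$ transports $\A_0(X)$ to $\A_0$ and each $\F_{\alpha, k}$ to $\F_{\alpha, \phi(k)}$, matching the conditions term by term: $P_X \M_{\X_a, \mathfrak{A}_a} P_X$ becomes $\M_{\Z, \mathfrak{B}}$ for the transported MAD family $\Z = \phi^{-1}[\Z_0]$ and injective $\mathfrak{B}$, giving (2).

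For the last assertion, pick $x \notin a$ (possible since $a$ is proper) and obtain $\pi(P_X) \pi[\M_{\X_a, \mathfrak{A}_a}] \pi(P_X) \cong \pi[\M_{\Z, \mathfrak{B}}]$. Proposition \ref{few-projections}(4) shows this compression fails condition $H_\omega'$ on its projections, and the failure transfers to $\pi[\M_{\X_a, \mathfrak{A}_a}]$: in the ambient commutative algebra, any separator $\pi(C)$ would yield $\pi(P_X)\pi(C)$ as a strict separator inside the compression, contradicting the failure. By Lemmas \ref{masa-masaQQ} and \ref{h-omega} this rules out a commutative lift. A conditional expectation from $\QQ$ onto $\pi[\M_{\X_a, \mathfrak{A}_a}]$ would realize it as a Banach-space quotient of $\QQ$ and hence (by \cite{pfitzner} and Proposition 3.1.4 of \cite{kania}) force the Grothendieck property; commutativity supplies the Banach-space splitting $\pi[\M_{\X_a, \mathfrak{A}_a}] = \pi(P_X)\pi[\M_{\X_a, \mathfrak{A}_a}]\pi(P_X) \oplus \pi(I - P_X)\pi[\M_{\X_a, \mathfrak{A}_a}]\pi(I - P_X)$, so Grothendieck would pass to $\pi[\M_{\Z, \mathfrak{B}}]$, contradicting Proposition \ref{few-projections}(5). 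The main obstacle is the disjointness bookkeeping that isolates $\Y_x$ as the sole contributing family after compression by $P_X$; once this is in place, the MAD/injective structure is forced and the reduction to Proposition \ref{few-projections} is essentially automatic.
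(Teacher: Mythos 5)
Your overall route is the same as the paper's: compress by $P_X$ for a suitable $X\in\X_x$, identify the corner with some $\M_{\Z,\mathfrak B}$ via the bijection $\N\to X$, and transfer the failure of $H_\omega'$ and of the Grothendieck property from the corner to the whole algebra. Your explicit justification that only $\Y_x$ survives the compression (disjointness of $h_0([y|n+1])$ and $h_0([x|n+1])$ once $y$ and $x$ differ, since $h_0$ is a Boolean homomorphism into $\wp(\N)$ and hence preserves exact disjointness) is correct and in fact spells out a point the paper leaves implicit.

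However, there is a genuine gap in your construction of $X$. A bare diagonal enumeration $X=\{k_n:n\in\N\}$ with $k_n\in h_0([x|n])$ only guarantees $X\in\X_x$; it does not guarantee that the trace family $\Z_0=\{Y\cap X: Y\in\Y_x,\ Y\cap X\ \hbox{infinite}\}$ is \emph{infinite}. Nothing prevents your $k_n$ from all being chosen inside a single $Y_0\in\Y_x$ (each $Y_0\in\Y_x$ is infinite and almost contained in every $h_0([x|n])$), in which case $X\subseteq Y_0$, $\Z_0$ reduces to essentially the single set $X$, and $P_X\M_{\X_a,\mathfrak A_a}P_X=\D_\kk(X,\alpha_{Y_0})$. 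That corner is liftable, satisfies $H_\omega'$, and is Grothendieck, so both clause (2) (which needs $\Z$ to be an infinite MAD family for Proposition \ref{few-projections} (4) and (5) to apply --- their proofs pick infinitely many distinct members of the family) and the final conclusion collapse for such an $X$. The paper avoids this by first fixing infinitely many distinct $X_k\in\Y_x$ and using the $H_\omega'$ (pseudo-union) property of $\wp(\N)/Fin$ to choose $X$ with $X_k\subseteq^* X\subseteq^* h_0([x|n])$ for all $k,n$; then $X$ meets infinitely many members of $\Y_x$ in an infinite set, so $\Z_0$ is an infinite MAD family on $X$. Your argument is repaired by replacing the diagonal enumeration with this choice of $X$ (or by interleaving infinitely many members of $\Y_x$ into your enumeration); the rest of your maximality, injectivity, and transfer arguments then go through.
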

 \begin{proof}
 Following the notation of Definition \ref{objects-a} let $\Y_x$ be the maximal almost disjoint
family in $\{X\subseteq \N: \forall n\in \N\   X\subseteq^* h_0([x|n])\}$. Let 
 $X_k\in \Y_x$ be distinct for $k\in \N$. 
 By the $H_\omega'$ property of $\wp(\N)/Fin$  (Lemma 1.1.2 of \cite{handbook-van-mill})
 there is $X\subseteq \N$ such that $X_k\subseteq^* X\subseteq^* h_0([x|n])$ for every $n, k\in \N$.
 Let 
 $$\Y=\{Y'\cap X: Y'\in \Y_x\ \hbox{and $Y'\cap X$ is infinite}\}.$$
 As $\Y_x$ is a maximal almost disjoint family among the sets  almost below each $h_0([x|n])$
 and $X$ is not covered by the union of a finite set and finitely many elements of $Y_x$ (as it
 is almost above all $X_k$s)
 $\Y$ is an infinite maximal  almost disjoint family of infinite subsets of $X$.  So
 if $\sigma: X\rightarrow \N$ is a bijection, then
 $\Z=\{\sigma[Y]: Y\in \Y\}$ is a maximal almost disjoint family of subsets of $\N$.
 Having in mind Definition \ref{def-M} it should be clear that 
 $$\mathcal O_\Y=
 \bigcap_{Y\in \Y}\{T\in \A_0(X): P_{Y}TP_{Y}\in \D_\kk(Y, \alpha_{Y}')\}$$
 is $*$-isomorphic (via associations induced by $\sigma$) to  the algebra $\M_{\Z, {\mathfrak A}''}$  
 for a maximal almost disjoint family $\Z\subseteq \wp(\N)$
 and an appropriate injective ${\mathfrak A}''\in (71/72, 1]^\Z$, where
 $\alpha_{Z}''=\alpha_Y'=\alpha_{Y'}$ for $Y'\in \Y_x$ and $Z\in \Z$  such that $Z=\sigma[Y'\cap X]$ 
 and $Y=Y'\cap Z$ (note that the almost disjointness
 of $\Y_x$ gives us the uniqueness of such $Y'$ and so the uniqueness of $Z$ and the uniqueness of $Y$).
 
 So it is enough to show that 
 $$\mathcal O_\Y=P_X\M_{\X_a, \mathfrak A_a}P_X=
  P_X\Big(\bigcap_{Y'\in \X_a}\{T\in \A_0: P_{Y'}TP_{Y'}\in \D_\kk(Y', \alpha_{Y'})\}\Big)P_X. $$

 For the inclusion $\subseteq$ take $T\in \mathcal O_\Y$ and $Y'\in \X_a$.
 If $Y'\cap X$ is finite, then $P_{Y'}TP_{Y'}$ is finite dimensional, so compact, so 
 $P_{Y'}TP_{Y'}\in \D_\kk(Y', \alpha_{Y'})$ because $T\in \A_0(X)$.
 If $Y'\cap X$ is infinite, then by the choice of $X$
 we must have $Y'\in \Y_x$ and so $Y'\cap X=Y\in \Y$ and so $P_{Y}TP_{Y}\in \D_\kk(Y, \alpha_{Y}')$,
 hence $P_{Y'}TP_{Y'}\in \D_\kk(Y', \alpha_{Y'})$, because $T\in \A_0(X)$ and $\alpha_{Y}'=\alpha_{Y'}$.
 So $T\in \M_{\X_a, \mathfrak A_a}$ and $T=P_XTP_X$ since $T\in \A_0(X)$, so 
 $T\in P_X\M_{\X_a, \mathfrak A_a}P_X$ as needed.
 
 For the inclusion $\supseteq$ take $T\in \M_{\X_a, \mathfrak A_a}$ and 
 $Y\in\Y$. Then $P_XTP_X\in \A_0(X)$ and  $P_{Y}TP_{Y}\in \D_\kk(Y, \alpha_{Y}')$
 since $P_{Y'}TP_{Y'}\in \D_\kk(Y', \alpha_{Y'})$ for $Y'\in \Y_x$ such that $Y=Y'\cap X$
 because $\alpha_{Y}'=\alpha_{Y'}$.
 
 To prove the last part of the lemma first note that by its main part and by Lemma \ref{few-projections} (4)
 $Proj(\pi[\M_{\X_a, \mathfrak A_a}])$ does not satisfy condition $H_\omega'$. So we can argue as in the last part of the proof of
  Lemma \ref{few-projections} (4) that $\pi[\M_{\X_a, \mathfrak A_a}]$  does not have a commutative lift to $\bb$.
 Secondly note that by the main part of the lemma and  Lemma \ref{few-projections} (5)
  $\pi[\M_{\X_a, \mathfrak A_a}]$ does not have the Grothendieck property
  as $P_X\pi[\M_{\X_a, \mathfrak A_a}]P_X$  is a quotient of $\pi[\M_{\X_a, \mathfrak A_a}]$ 
  and quotients of Banach spaces preserve the Grothendieck property by Proposition 3.1.4 of \cite{kania}.
  So we can argue as in the last part of the proof of
  Lemma \ref{few-projections} (5) that $\pi[\M_{\X_a, \mathfrak A_a}]$ does not
  admit a conditional expectation from $\QQ$.
 \end{proof}
 
  \begin{definition}
 Suppose that $a\subseteq \{0,1\}^\N$ and $\B\subseteq \QQ$ is a commutative
  C*-subalgebra of $\QQ$. We say that a  Boolean monomorphism
 $h: Clop(\{0,1\}^\N)\rightarrow \B$ captures $\B$ on the set $a$ if the following two conditions are equivalent
 for every $x\in\{0,1\}^\N$:
 \begin{enumerate}
 \item $x\in a$
 \item for any projection $P\in \B$ satisfying $P\leq h({[x|n]})$ for all $n\in \N$, the algebra
 $Proj(P\B P)$ is Boolean isomorphic to $\wp(\N)/Fin$.
 \end{enumerate}
 \end{definition}

 \begin{lemma}\label{captures} Suppose that $a\subseteq \{0,1\}^\N$ and
  $h_0: Clop(\{0,1\}^\N)\rightarrow \wp(\N)$ is the Boolean embedding as in Defiition \ref{objects-a} (2)
  and $g_0:\wp(\N)\rightarrow Proj(\QQ)$ be defined by $g_0(X)=\pi(P_X)$.
  Let $\X_a$ and ${\mathfrak A}_a$ be as in Definition \ref{objects-a} (5), (6).
  Then $$g_0\circ h_0:  Clop(\{0,1\}^\N)\rightarrow \pi[Proj(\M_{\X, \mathfrak A})]$$
  captures $\pi[\M_{\X_a, {\mathfrak A}_a}]$ on the set $a$.
  
 \end{lemma}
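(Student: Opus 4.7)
The plan is to verify both directions of the equivalence required by the definition of capturing, applied to $g_0 \circ h_0$ and the commutative C*-algebra $\B := \pi[\M_{\X_a, \mathfrak A_a}]$.

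For the implication $x \in a \Rightarrow (2)$, I would take any nonzero projection $P \in \B$ with $P \leq \pi(P_{h_0([x|n])})$ for every $n$, and first lift $P$ to a projection $Q \in \M_{\X_a, \mathfrak A_a}$. Starting from any self-adjoint preimage $T$, the compactness of $T^2 - T$ implies $\sigma(T) \setminus \{0,1\}$ is a set of isolated eigenvalues accumulating only at $\{0,1\}$, so a continuous cutoff function applied to $T$ produces the desired projection in $C^*(T) \subseteq \M_{\X_a, \mathfrak A_a}$. The inequality $P \leq \pi(P_{h_0([x|n])})$, together with Lemma \ref{limit-compact}, forces $\nabla Q := \{n : Q_n \neq 0\}$ to satisfy $\nabla Q \subseteq^* h_0([x|n])$ for every $n$, i.e.\ $\nabla Q \in \X_x$. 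Since $x \in a$ we have $\X_x \subseteq \X_a$, and by Definition \ref{objects-a}(6) the value $r_x := {\mathfrak A}_a(\nabla Q)$ is common to all of $\X_x$. By the defining condition of $\M_{\X_a, {\mathfrak A}_a}$, for every $T \in \M_{\X_a, {\mathfrak A}_a}$ the compression $P_{\nabla Q} T P_{\nabla Q}$ lies in $\D_\kk(\nabla Q, r_x)$, so $QTQ$ is, modulo compact, diagonal with respect to the $\F_{r_x}$-eigenvectors of $Q$. Conversely, any bounded $\F_{r_x}$-diagonal $T \in \D(\nabla Q, r_x)$ already belongs to $\M_{\X_a, {\mathfrak A}_a}$, because any $X \in \X_a$ with $X \cap \nabla Q$ infinite necessarily lies in $\X_x$ (hence $\alpha_X = r_x$). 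Therefore $P \B P$ is $*$-isomorphic to $\pi[\D(\E')]$ for a countable orthonormal basis $\E'$, and Lemma \ref{pi} identifies its projection algebra with $\wp(\N)/Fin$.

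For the implication $x \notin a \Rightarrow \neg(2)$, I would enumerate $\Y_x = \{X_k : k \in \N\}$ and use the $H_\omega'$ property of $\wp(\N)/Fin$ (Lemma \ref{h-omega}) applied to the decreasing sequence $\{h_0([x|n])\}_n$ above the increasing unions $\{\bigcup_{k \leq m} X_k\}_m$ to produce an infinite $Y \subseteq \N$ with $X_k \subseteq^* Y$ for every $k$ and $Y \subseteq^* h_0([x|n])$ for every $n$. Since $P_Y \in \D_0 \subseteq \M_{\X_a, {\mathfrak A}_a}$ (Lemma \ref{containsD0}), the projection $P := \pi(P_Y)$ lies in $\B$ and satisfies $P \leq \pi(P_{h_0([x|n])})$; I claim it witnesses the failure of (2). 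The crucial observation is that every $X \in \X_a$ with $X \cap Y$ infinite must lie in $\Y_x$: each $X \in \X_a$ lies in $\X_y$ for some $y$, and if $y \neq x$ then $h_0([x|n]) \cap h_0([y|n]) = \emptyset$ for $n$ larger than the first coordinate of disagreement, forcing $X \cap Y$ finite; so $y = x$, and since $x \notin a$ one has $\X_a \cap \X_x = \Y_x$ (as $\X_x \not\subseteq \X_a$ but $\Y_x \subseteq \X_a$). Fixing a bijection $\sigma : Y \to \N$, the argument of Lemma \ref{restriction-M} identifies $P_Y \M_{\X_a, {\mathfrak A}_a} P_Y$ with $\M_{\Z, \mathfrak B}$, where $\Z = \{\sigma[X_k \cap Y] : k \in \N\}$ is a maximal almost disjoint family on $\N$ and $\mathfrak B$ is injective (by Definition \ref{objects-a}(6)). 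By Proposition \ref{few-projections}(4), $Proj(\pi[\M_{\Z, \mathfrak B}])$ fails $H_\omega'$, so by Lemma \ref{h-omega} it is not isomorphic to $\wp(\N)/Fin$.

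The main obstacle I foresee lies in the direction $x \in a$: one must verify that every bounded $\F_{r_x}$-diagonal actually is realized as $QTQ$ modulo compact for some $T \in \M_{\X_a, {\mathfrak A}_a}$, so that $P \B P$ is the full atomic masa and not a proper subalgebra. This reduces to showing that a diagonal $T \in \D(\nabla Q, r_x)$ automatically satisfies $P_X T P_X \in \D_\kk(X, \alpha_X)$ for every $X \in \X_a$, which holds because such $X$ either intersect $\nabla Q$ only finitely (making the compression compact) or lie in $\X_x$ (so $\alpha_X = r_x$ and the compression is already in $\D(X, r_x)$). In the direction $x \notin a$, the analogous delicate point is identifying $P_Y \M_{\X_a, {\mathfrak A}_a} P_Y$ precisely with $\M_{\Z, \mathfrak B}$, which again rests on the disjointness analysis of the $h_0([x|n]) \cap h_0([y|n])$ for $y \neq x$.
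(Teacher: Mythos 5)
Your proposal follows the same route as the paper's own proof: for $x\in a$ you lift the projection to $\M_{\X_a,\mathfrak A_a}$, locate its support $\nabla Q$ in $\X_x\subseteq\X_a$, and identify the corner with an atomic diagonal algebra using the constancy of $\mathfrak A_a$ on $\X_x$ and Lemma \ref{Ds-inM}; for $x\notin a$ you reconstruct Lemma \ref{restriction-M} and then invoke Proposition \ref{few-projections}(4) and Lemma \ref{h-omega}. This is exactly what the paper does (it simply cites Lemma \ref{restriction-M} instead of re-deriving it), and your functional-calculus lifting of $P$ is a legitimate substitute for the paper's observation that $Proj(\pi[\M_{\X_a,\mathfrak A_a}])=\pi[Proj(\M_{\X_a,\mathfrak A_a})]$.

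One correction to the second half. The family $\Y_x$ is an \emph{infinite maximal} almost disjoint family, hence uncountable, so it cannot be enumerated as $\{X_k:k\in\N\}$. The $H_\omega'$ step only requires a countable subfamily $\{X_k:k\in\N\}\subseteq\Y_x$ to produce $Y$ with $X_k\subseteq^*Y\subseteq^*h_0([x|n])$, but the family $\Z$ must then be taken to be $\{\sigma[Y'\cap Y]:Y'\in\Y_x,\ Y'\cap Y\ \hbox{infinite}\}$, of which there are uncountably many members, and its maximality on $Y$ has to be argued from the maximality of $\Y_x$ inside $\X_x$ together with the fact that $Y$ is not almost covered by finitely many members of $\Y_x$ (this is precisely the content of Lemma \ref{restriction-M}). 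The countable family $\{\sigma[X_k\cap Y]:k\in\N\}$ that you wrote down is never a maximal almost disjoint family, and with that $\Z$ neither the identification $P_Y\M_{\X_a,\mathfrak A_a}P_Y\cong\M_{\Z,\mathfrak B}$ nor the appeal to Proposition \ref{few-projections}(4) (whose proof of the failure of $H_\omega'$ uses maximality through part (2) of that proposition) would be valid. Your disjointness analysis of the $h_0([x|n])$ for distinct branches, which shows that every $X\in\X_a$ meeting $Y$ infinitely must come from $\Y_x$, is correct and is indeed the key point; it just has to be applied to all of $\Y_x$, not to a countable piece of it.
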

 \begin{proof}  First note that any projection in $\pi[\M_{\X, \mathfrak A}]$ is in
 $\pi[Proj(\M_{\X, \mathfrak A})]$ because
  $\M_{\X, \mathfrak A}$ is generated by its projections by Lemma \ref{gen-projections}, so
  $\pi[\M_{\X, \mathfrak A}]$ is generated by 
  $\pi[Proj(\M_{\X, \mathfrak A})]$. However, if a commutative C*-algebra $C(K)$ is generated by
  some Boolean algebra $\CC$ of its projections, then $\CC$ contains all projections (any
  clopen set can be written as a finite union of finite intersections of clopen sets corresponding to elements of $\CC$, because
  of the compactness and separation of points of $K$ by $\CC$).
  
 If $x\in a$, fix a projection $P=(P_n)_{n\in \N}\in Proj(\M_{\X_a, {\mathfrak A}_a})$ such that
$\pi(P)\leq g_0(h_0([x|n]))$ for each $n\in \N$. Define
$X=\{n\in \N: P_n\not=0\}$.  Note that 
$$\pi(P)\leq \pi(P_X)\leq \pi[P_{h_0([x|n])}]$$
and $X\subseteq^*h_0([x|n])$ for all $n\in\N$ by Lemma \ref{limit-compact}. It follows that $X\in \X_x\subseteq \X_a$.

It is enough to prove that  $\pi[Proj(P_X\M_{\X_a, {\mathfrak A}_a} P_X)$ is Boolean isomorphic to $\wp(\N)/Fin$.
 But
 $P_X\M_{\X_a, {\mathfrak A}_a}P_X=\D_\kk(X, \alpha_X^a)$ by Lemma \ref{Ds-inM} as $X\in \X_a$, where 
 $\alpha_X^a={\mathfrak A}_a(X)$. So $\pi[\D_\kk(X, \alpha_X^a)]=\pi[\D(X, \alpha_X^a)]$
 which is Boolean isomorphic to $\wp(\N)/Fin$ as needed.

 If $x\in \{0,1\}^\N\setminus a$, then by Lemma \ref{restriction-M} we have a projection
 $P_X$ as described there. By Lemma \ref{few-projections} $P_X\M_{\X_a, {\mathfrak A}_a} P_X$
 does not satisfy condition $H_\omega'$ and so is not $*$-isomorphic to
 $\wp(\N)/Fin$ by Lemma \ref{h-omega}.
 \end{proof}

   It seems that further understanding of the Gelfand spaces of 
   the masas $\pi[\M_{\X_{a}, {\mathfrak A}_a}]$ may be related 
   the arguments like in \cite{ghk}. However, we are already able to prove the following:
 
\begin{proposition}\label{two-to-c} Use the notation
of Definition \ref{objects-a}. Among all the masas  $\pi[\M_{\X_{a}, {\mathfrak A}_a}]$ for
$a\subseteq \{0,1\}^\N$ there are $2^{\cc}$
pairwise non-$*$-isomorphic ones.
\end{proposition}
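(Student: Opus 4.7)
The plan is to distill a $*$-isomorphism invariant from Lemma \ref{captures} and then carry out a cardinality count. For a commutative C*-subalgebra $\B$ of $\QQ$, define
$$\mathrm{Cap}(\B)=\{a\subseteq\{0,1\}^\N:\text{some Boolean monomorphism } h:Clop(\{0,1\}^\N)\to\B\text{ captures }\B\text{ on }a\}.$$
Since the defining property of ``capturing'' refers only to projections of $\B$, the order on projections, and the projection lattice of a corner $P\B P$, each of which is preserved by any $*$-isomorphism $\phi:\B\to\B'$ via $P\mapsto\phi(P)$ (noting $\phi(P\B P)=\phi(P)\B'\phi(P)$ and that $\phi$ restricts to a Boolean isomorphism on projections in the sense of Definition \ref{ba-projections}), $\mathrm{Cap}$ is a $*$-isomorphism invariant: $\B\cong\B'$ implies $\mathrm{Cap}(\B)=\mathrm{Cap}(\B')$.

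Next I would bound $|\mathrm{Cap}(\B)|\leq\cc$ for any commutative C*-subalgebra $\B\subseteq\QQ$. A Boolean monomorphism $h:Clop(\{0,1\}^\N)\to\B$ is determined by its values on the countable generating set $\{\{x\in\{0,1\}^\N:x(n)=1\}:n\in\N\}$ of $Clop(\{0,1\}^\N)$. Since $\bb$, and hence $\QQ$ and every subset $\B\subseteq\QQ$, has cardinality at most $\cc$ (each element of $\bb$ is determined by its matrix with respect to a fixed orthonormal basis), there are at most $\cc^{\omega}=\cc$ such monomorphisms. Each of them captures at most one $a\subseteq\{0,1\}^\N$ (the set $a$ is uniquely determined by $h$ and $\B$ through the equivalence in the definition of capturing), so $|\mathrm{Cap}(\B)|\leq\cc$.

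By Lemma \ref{captures}, $a\in\mathrm{Cap}(\pi[\M_{\X_a,\mathfrak A_a}])$ for every $a\subseteq\{0,1\}^\N$. Thus if $a$ and $b$ yield $*$-isomorphic masas $\pi[\M_{\X_a,\mathfrak A_a}]\cong\pi[\M_{\X_b,\mathfrak A_b}]$, then the invariance above gives $b\in\mathrm{Cap}(\pi[\M_{\X_a,\mathfrak A_a}])$; hence every $*$-isomorphism class among $\{\pi[\M_{\X_a,\mathfrak A_a}]:a\subseteq\{0,1\}^\N\}$ is represented by at most $\cc$ values of $a$. Since the total number of subsets $a\subseteq\{0,1\}^\N$ is $2^\cc$, the number of $*$-isomorphism classes is at least $2^\cc/\cc=2^\cc$, producing the required $2^\cc$ pairwise non-$*$-isomorphic masas. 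The main point requiring care is the $*$-isomorphism invariance of $\mathrm{Cap}$, but this is immediate from the purely $*$-algebraic nature of the capturing condition; the remainder is a standard cardinality argument.
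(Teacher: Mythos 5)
Your proposal is correct and is essentially the paper's own argument: the paper likewise composes the capturing monomorphism $g_0\circ h_0$ with a putative $*$-isomorphism, observes that capturing is preserved, and counts that only $\cc$ Boolean monomorphisms from the countable algebra $Clop(\{0,1\}^\N)$ into $\QQ$ exist, so each isomorphism class meets at most $\cc$ of the $2^\cc$ indices $a$. Packaging this as the invariant $\mathrm{Cap}(\B)$ is a harmless reformulation, and your explicit checks (each $h$ captures at most one $a$; invariance under $P\mapsto\phi(P)$) fill in exactly the details the paper leaves implicit.
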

\begin{proof} Suppose that $a, a'\subseteq \{0,1\}^\N$ and
$\phi: \pi[\M_{\X_{a}, {\mathfrak A}_a}]\rightarrow \pi[\M_{\X_{a'}, {\mathfrak A}_{a'}}]$ 
 is a $*$-isomorphism.
 Then, as $g_0\circ h_0$ as in Lemma \ref{captures} captures  $\M_{\X_{a}, {\mathfrak A}_a}$ 
 on the set $a\subseteq\{0,1\}^\N$ by Lemma \ref{captures}, 
 we conclude that $\phi\circ g_0\circ h_0$ captures 
 $\M_{\X_{a'}, {\mathfrak A}_{a'}}$ on the set $a$ as well.
 As, due to  the counting argument, there are only continuum many Boolean homomorphisms 
 from the countable set $Clop(\{0,1\}^\N)$  into
 $\QQ$ which has cardinality $\cc$, a given masa can be captured by Boolean
  monomorphisms only on continuum many sets $a\subseteq \{0,1\}^\N$.
 So an algebra of the form $\M_{\X_{a'}, {\mathfrak A}_{a'}}$  can be isomorphic only to continuum many
 other algebras of the form $\M_{\X_{a}, {\mathfrak A}_{a}}$. But we have $2^{\cc}$ many of these
 algebras, one for each $a\subseteq \{0,1\}^\N$.
\end{proof}

\bibliographystyle{amsplain}

\begin{thebibliography}{120}

\bibitem{tristan} T. Bice, P.  Koszmider, 
\emph{A note on the Akemann-Doner and Farah-Wofsey constructions}. 
Proc. Am. Math. Soc. 145, No. 2, 681-687 (2017). 

\bibitem{bade} W. Bade, 
\emph{On Boolean algebras of projections and algebras of operators}. 
Trans. Am. Math. Soc. 80, 345--360 (1955). 

\bibitem{akemann-pure} C. Akemann, N. Weaver,  \emph{$\B(H)$ has 
a pure state that is not multiplicative on any masa}. Proc.
Natl. Acad. Sci. U. S. A. 105(14), 5313--5314 (2008).

\bibitem{pathology} J. Anderson,
\emph{Pathology in the Calkin algebra.}
J. Oper. Theory 2, 159--167 (1979). 

 
 \bibitem{blackadar} B. Blackadar, 
\emph{Operator algebras. Theory of C*-algebras and von Neumann algebras}. 
Encyclopaedia of Mathematical Sciences 122. 
Operator Algebras and Non-Commutative Geometry 3.
 Springer (2006). 
 
\bibitem{bdf} L.  Brown, R.  Douglas, P. Fillmore, 
\emph{Unitary equivalence modulo the compact operators and extensions of C*- algebras}. 
Proc. Conf. Operator Theory, Dalhousie Univ., Halifax 1973, Lect. Notes Math. 345, 58--128 (1973). 

\bibitem{calkin} J. Calkin, 
\emph{Two-sided ideals and congruences in the ring of bounded operators in Hilbert space}. 
Ann. Math. (2) 42, 839-873 (1941). 

\bibitem{darst} R. Darst, 
\emph{On a theorem of Nikodym with applications to weak convergence and von Neumann algebras}. 
Pac. J. Math. 23, 473-477 (1967). 

\bibitem{elementary} A. Dow, 
\emph{An introduction to applications of elementary submodels to topology}. 
Topology Proc. 13, No. 1, 17-72 (1988). 

\bibitem{measure} A. Dow, K. P. Hart, 
\emph{The measure algebra does not always embed.}
Fundam. Math. 163, No. 2, 2000, pp. 163-176. 

\bibitem{efimov} A. Dow, R. Pichardo-Mendoza, 
\emph{Efimov spaces, CH, and simple extensions. }
Topol. Proc. 33, 277-283 (2009). 

\bibitem{engelking} R. Engelking, 
\emph{A topological proof of Parovi\v cenko's characterization of $\beta\N\setminus\N$}.
Topology Proc. 10, No. 1, 47-53 (1985). 

\bibitem{farah-ann} I. Farah, \emph{All automorphisms of the Calkin algebra are inner}, 
Ann. Math. (2) 173 (2),  2011, pp. 619-661.

\bibitem{ilijas-wofsey} I. Farah, E. Wofsey, 
\emph{Set theory and operator algebras. }
Cummings, James (ed.) et al., Appalachian set theory 2006--2012. 
Cambridge University Press.  London Mathematical Society Lecture Note Series 406, 63-119 (2013). 

\bibitem{farah-ma}  I. Farah, G.  Katsimpas, A.  Vaccaro, \emph{Embedding C*-algebras into the Calkin
algebra}, Int. Math. Res. Not. IMRN (2021), no. 11, 8188--8224.


\bibitem{ilijas-book} I. Farah, 
\emph{Combinatorial set theory of C*-algebras.}
Springer Monographs in Mathematics. Cham: Springer, 2019. 

\bibitem{universal-calkin} I. Farah, I.  Hirshberg, A. Vignati, 
\emph{The Calkin algebra is $\aleph_1$-universal}. 
Isr. J. Math. 237, No. 1, 287--309 (2020). 

\bibitem{saeed} S. Ghasemi, 
\emph{SAW*-algebras are essentially non-factorizable.}
Glasg. Math. J. 57, No. 1, 1-5 (2015). 

\bibitem{damian} D. G{\l}odkowski, P. Koszmider, 
\emph{Products of C*-algebras that do not embed into the Calkin algebra}.   Fund. Math.
 271 (2025) , 273-292. 
 
 \bibitem{kania} M. Gonz\'alez, T.  Kania,
\emph{Grothendieck spaces: the landscape and perspectives}. 
Jpn. J. Math. (3) 16, No. 2, 247-313 (2021). 


\bibitem{ghk} O. Guzman, M. Hrusak, P. Koszmider, 
\emph{On $\R$-embeddability of almost disjoint families and Akemann-Doner C*-algebras.}
Fund. Math. 254, No. 1, 15-47 (2021). 

\bibitem{halmos} P. Halmos, 
\emph{Introduction to Hilbert space and the theory of spectral multiplicity}. 2nd ed.
New York: Chelsea Publishing Company 120 p. (1957). 



\bibitem{parrott} B. Johnson, S.  Parrott, 
\emph{Operators commuting with a von Neumann algebra modulo the set 
of compact operators}. 
J. Funct. Anal. 11, 39-61 (1972). 


\bibitem{koppelberg} S. Koppelberg,   \emph{Handbook of Boolean algebras}, vol. 1, 
North Holland,  1989.

\bibitem{few} P. Koszmider, 
\emph{Banach spaces of continuous functions with few operators.}
Math. Ann. 330, No. 1, 151-183 (2004). 

\bibitem{pure} P. Koszmider, 
\emph{A non-diagonalizable pure state. }
Proc. Natl. Acad. Sci. USA 117, No. 52, 33085 (2021). 


\bibitem{vignati-mckenney} P. McKenney,  A. Vignati, \emph{Forcing Axioms and coronas  C*-algebras}, 
Journal of Mathematical Logic, 2021, no. 2, Paper No. 2150006, 73 pp.

\bibitem{handbook-van-mill} J. van Mill, \emph{An introduction to $\beta\omega$.} 
Handbook of set-theoretic topology, 503--567, North-Holland, Amsterdam, 1984.

\bibitem{monk} D. Monk, R.  Solovay, 
\emph{On the number of complete Boolean algebras.}
Algebra Univers. 2, 365-368 (1972). 

\bibitem{murphy} G. Murphy,
\emph{C*-algebras and operator theory}. 
 Academic Press, Inc. (1990). 
 
\bibitem{saw} G.  Pedersen, 
\emph{SAW*-algebras and corona C*-algebras, contributions to non-commutative topology}. 
J. Oper. Theory 15, 15-32 (1986)
 
\bibitem{pfitzner} H.  Pfitzner, 
\emph{Weak compactness in the dual of a C*-algebra is determined commutatively}. 
Math. Ann. 298, No. 2, 349-371 (1994). 

\bibitem{dilip} D. Raghavan, 
\emph{A model with no strongly separable almost disjoint families}. 
Isr. J. Math. 189, 39-53 (2012). 

\bibitem{semadeni} Z. Semadeni, 
\emph{Banach spaces of continuous functions}. 
Monografie matematyczne. Tom 55. Warszawa: PWN - Polish Scientific Publishers.  (1971). 

\bibitem{ss} S. Shelah, J. Steprans, 
\emph{Masas in the Calkin algebra without the continuum hypothesis.}
J. Appl. Anal. 17, No. 1, 69-89 (2011). 


\bibitem{vaccaro-thesis} A. Vaccaro, \emph{C*-algebras and the uncountable: a systematic study of the combinatorics
of the uncountable in the noncommutative framework}, Ph.D. thesis, York University, 2019.

\bibitem{vaccaro-pacific} A. Vaccaro, \emph{Obstructions to lifting abelian subalgebras of corona algebras}, Pacific J. Math. 302
(2019), no. 1, 293--307.

\bibitem{vaccaro-ijm} A. Vaccaro, \emph{Trivial endomorphisms of the Calkin algebra}, Israel J. Math. 247 (2022), no. 2,
873--903.



\bibitem{vignati-thesis} A. Vignati, \emph{Logic and C*-algebras: Set theoretical dichotomies in the theory of continuous
quotients}, Ph.D. thesis, York University, Toronto, 2017.

\bibitem{voiculescu} D. Voiculescu, 
\emph{A non-commutative Weyl-von Neumann theorem. }
Rev. Roum. Math. Pures Appl. 21, 97-113 (1976). 
 
\end{thebibliography}

\end{document}